\newcommand{\mr}{\mathrm}
\newcommand{\mc}{\mathcal}
\newcommand{\mb}{\mathbf}
\newcommand{\bs}{\boldsymbol}
\newcommand{\mbb}{\mathbb}
\newcommand{\pp}{\partial}
\newcommand{\dualpr}[1]{\langle #1\rangle}
\renewcommand{\div}{\nabla\cdot}
\newtheorem{lemma}{Lemma}[section]
\newtheorem{theorem}{Theorem}[section]
\newtheorem{prop}{Proposition}[section]
\newtheorem{defi}{Definition}[section]
\newtheorem{assmp}{Assumption}[section]
\newcommand{\newparagraph}[1]{\quad\\ \noindent{\bf #1}}
\begin{document}

\title{A unified error analysis of HDG methods for the static Maxwell equations}
\author{Shukai Du\thanks{Email: shukaidu@udel.edu}\,\, and Francisco-Javier Sayas\\
Department of Mathematical Sciences, University of Delaware}
\maketitle

\begin{abstract}
We propose a framework that allows us to analyze different variants of HDG methods for the static Maxwell equations using one simple analysis. It reduces all the work to the construction of projections that best fit the structures of the approximation spaces. 
As applications, we analyze four variants of HDG methods (denoted by $\mathfrak{B}$, $\mathfrak{H}$, $\mathfrak{B+}$, $\mathfrak{H+}$), where two of them are known (variants $\mathfrak{H}$, $\mathfrak{B+}$) and the other two are new (variants $\mathfrak{H+}$, $\mathfrak{B}$). Under certain regularity assumption, we show that all the four variants are optimally convergent and that variants $\mathfrak{B+}$ and $\mathfrak{H+}$ achieve superconvergence without post-processing. For the two known variants, we prove their optimal convergence under weaker requirements of the meshes and the stabilization functions thanks to the new analysis techniques being introduced. For solution with low-regularity, we give an analysis to these methods and investigate the effect of different stabilization functions on the convergence. At the end, we provide numerical experiments to support the analysis.
\end{abstract}

\noindent{Keywords: Discontinuous Galerkin, Hybridization, Maxwell equations, Unified analysis, Superconvergence, Unstructured polyhedral meshes}\\
\noindent{MSC2010: 65N15, 65N30, 35Q61}

\section{Introduction}
Maxwell equations describe the interaction between electric and magnetic fields and play a central role in modern sciences and engineering. To understand the solution of Maxwell equations in various application scenarios, numerical treatments are necessary. The finite element method (FEM) is one of these numerical tools and it has some nice features such as easy handling of complicate geometry, exponential rate of convergence by hp-refinements, etc.

Finite element methods can be divided into two categories -- conforming and non-conforming. For Maxwell equations, conforming elements usually refer to $H(\mr{curl})$-conforming elements since $H(\mr{curl})$ is used as the energy space for the solution of Maxwell equations. $H(\mr{curl})$-conforming elements (also called edge elements) have been widely studied since they were first proposed by N\'{e}d\'{e}lec in \cite{Ne:1980,Ne:1986}; see, for instance, \cite{GaMe:2012,Hi:2002,Mo:1992,Mo:2003,Mo:1991,ZhShWiXu:2009}.

For non-conforming elements, one popular choice is the discontinuous Galerkin (DG) finite element method (see \cite{ArBrCoMa:2001} for a general introduction and see, for instance, \cite{BrLiSu:2008,CoLiSh:2004,FeWu:2014,HoPeSc:2004,PeScMo:2002,PeSc:2003} for DG methods for Maxwell equations). Since DG methods allow the use of independent approximation spaces on each element, they possess certain nice properties such as the flexibility of choosing local spaces, allowance of triangulation with hanging nodes, high parallel efficiency, easiness of implementation, simple treatment of boundary conditions, etc. Despite their advantages, DG methods in general use more degrees of freedom compared to the corresponding conforming methods. To overcome this difficulty, the hybridizable discontinuous Galerkin (HDG) method was proposed \cite{CoGoLa:2009}. By introducing a Lagrange multiplier on the skeleton of the mesh and using the hybridization techniques, HDG method allows the solution of a much smaller system only involving the Lagrange multiplier and then to recover locally the rest of the degrees of freedom on each element. 

Recently, there has been considerable interest in developing HDG methods for Maxwell equations and many variants \cite{ChCuXu:2019,ChQiSh:2018,ChQiShSo:2017,LiLaPe:2015,LuChQi:2017,NgPeCo2:2011} of HDG methods have been proposed and analyzed. However, to the best of our knowledge, there is no work that provides a unified point of view of understanding these variants. This leads to a possibility of repeated or unnecessary arguments being generated and a lack of recognition of the connections among these variants. This motivates us to consider a unified analysis. In this paper, we propose a framework that enables us to clearly decouple the error analysis techniques into two groups -- those related to the PDE and those related to the HDG variants (namely, the choices of the approximation spaces and stabilization functions). 
The benefits of doing so include the following:
\begin{itemize}
\item Recycling existing error analysis techniques. We demonstrate this by using only one analysis to obtain the error estimates for four variants of HDG methods. In this way, we can avoid introducing repeated arguments for each variants.
\item Providing guidelines for systematically discovering new optimal convergent and super convergent HDG methods. We discover two new HDG variants $\mathfrak{B}$ and $\mathfrak{H+}$ by using this framework, where variant $\mathfrak{H+}$ achieves superconvergence in the sense of the degrees of freedom of the numerical trace (the discrete electric field achieves $\mc O(h^{k+2})$ convergence while its numerical trace only lives in a proper subspace of $\mc P_{k+1}(F)^t$ on each face $F$; see the end of Section \ref{sec:sys_err_ana} for a detailed discussion about this).
\item Simple analysis of mixed type HDG methods where the local spaces and stabilization functions vary from element to element. This is doable since we use local projections to capture the features of the HDG variants (the main part of which is how to choose local approximation spaces and stabilization functions).
\end{itemize}

Let us mention two inspirations of this work. The first one is \cite{CoGoSa:2010}, where a tailored projection is proposed to analyze a class of HDG methods in a unified way under the setting of elliptic problem (this is inspired by the celebrated Raviart-Thomas (RT) and Brezzi-Douglas-Marini (BDM) projections). This approach to the analysis is often referred to as ``projection-based error analysis'' (we refer to \cite{DuSa_book:2019} for a systematic introduction). The second one is our previous work about HDG methods for elastic waves \cite{du2019new}, in which we show that we can use projection-based error analysis for those HDG methods whose approximation spaces do not admit $M$-decomposition \cite{CoFuSa:2017}. The work of this paper can be regarded as a generalization of the work in \cite{du2019new} to the setting of Maxwell equations.

To proceed with the discussion, we shall now introduce the model problem.  Let $\Omega\subset\mbb R^3$ be a bounded simply connected polyhedral domain with connected Lipschitz boundary $\Gamma:=\pp\Omega$. We consider the following static Maxwell equations in a mixed form:
\begin{subequations}\label{eq:PDE}
\begin{alignat}{5}
\label{eq:PDE_1}
\mb w-\nabla\times\mb u &= 0 &\quad& \mr{in}\ \Omega,\\
\label{eq:PDE_2}
\nabla\times\mb w+\nabla p&=\mb  f &&\mr{in}\ \Omega,\\
\label{eq:PDE_3}
\nabla\cdot \mb u &=0 && \mr{in}\ \Omega,\\
\label{eq:PDE_4}
\mb n\times\mb u&=\mb g &&\mr{on}\ \Gamma,\\
\label{eq:PDE_5}
p &= 0 &&\mr{on}\ \Gamma.
\end{alignat}
\end{subequations}
In the above, variables $\mb u$ and $\mb w$ are the electric and the magnetic fields respectively, and $p$ is a Lagrange multiplier introduced to have a better control of $\nabla\cdot\mb u$ (see \cite{BoGu:2011,BoGuLu:2013}). Note that when $\mb f$ is divergence free, $p$ admits trivial solution. We remark that \eqref{eq:PDE} with a different boundary condition is related to the Stokes equations with vorticity formulations; see, for instance, \cite{CoGo:2005,CoGo2:2005}.

The rest of the paper is organized as follows. In Section \ref{sec:gen_setting}, we propose an HDG framework with unspecified approximation spaces and stabilization functions; we then give an analysis by using a projection satisfying certain criteria. In Section \ref{sec:gal_proj}, we review some well known projections and construct some new projections that we shall use later. In Section \ref{sec:sys_err_ana}, we consider four variants of HDG methods for Maxwell equations (denoted by $\mathfrak{B}$, $\mathfrak{H}$, $\mathfrak{B+}$, $\mathfrak{H+}$). We give a unified analysis to the four variants by using the abstract analysis setting established in Section \ref{sec:gen_setting} combined with suitable projections discussed in Section \ref{sec:gal_proj}. We show that all the variants are optimal and variants $\mathfrak{B+}$, $\mathfrak{H+}$ achieve superconvergence under certain regularity assumption. We then compare these four variants and discuss their connections. In  Section \ref{sec:low_reg}, we give an analysis to these methods and also the standard HDG method for solution with low-regularity, and investigate the effect of different stabilization functions on the convergence.
Finally in Section \ref{sec:num_exp}, we present some numerical tests to support the analysis.

\section{The framework}\label{sec:gen_setting}
\subsection{Notation}
We begin by introducing some notation that will be used extensively in the paper.
Let $\mc T_h$ be a conforming triangulation of $\Omega$, where each element $K\in\mc T_h$ is a star-shaped polyhedron. Let $\mc E_K$ and $\mc E_h$ be the collections of all faces of $K$ and $\mc T_h$, respectively. We use the standard notation $h_K$ as the diameter of $K$ and denote by $h:=\max_{K\in\mc T_h}h_K$ as the mesh size of $\mc T_h$.
For $k\ge0$, we denote by $\mc P_k(\mc O)$ the polynomial space of degree $k$ supported on $\mc O$, where $\mc O$ can be an element in $\mc T_h$ or a face in $\mc E_h$. Let $N_0$ be a large integer.
For any $K\in\mc T_h$, let $W(K)$ and $V(K)$ be two subspaces of $\mc P_{N_0}(K)^3$, and $Q(K)$ be a subspace of $\mc P_{N_0}(K)$. For any $F\in\mc E_h$, let $N(F)$ be a subspace of $\mc P_{N_0}(F)^t:=\{\mb u\in\mc P_{N_0}(F)^3:\mb u\cdot\mb n_F=0\}$ (for a vector field $\mb v$ supported on certain surface $F$, we denote by $\mb v^t:=\mb n_F\times\mb v\times\mb n_F$ the tangential component of the vector field), and $M(F)$ be a subspace of $\mc P_{N_0}(F)$. Denote by $N(\pp K):=\prod_{F\in\mc E_K} N(F)$ and $M(\pp K):=\prod_{F\in\mc E_K} M(F)$.  Let $\mr P_N: L^2(\pp K)^3\rightarrow N(\pp K)$ and $\mr P_M: L^2(\pp K)\rightarrow M(\pp K)$ be the $L_2$ projections to their range spaces respectively. We assume all the spaces introduced above are non-empty. Define
\begin{subequations}
\begin{alignat*}{5}
&W_h:=\prod_{K\in\mc T_h}W(K),\quad V_h:=\prod_{K\in\mc T_h}V(K),\quad Q_h:=\prod_{K\in\mc T_h}Q(K),\\
&N_h:=\prod_{F\in\mc E_h}N(F),\quad M_h:=\prod_{F\in\mc E_h}M(F).
\end{alignat*}
\end{subequations}
We use the following notation for the discrete inner products on $\mc T_h$ and $\pp\mc T_h$:
\begin{align*}
(*_1,*_2)_{\mc T_h}=\sum_{K\in\mc T_h}(*_1,*_2)_K,\quad
\dualpr{*_1,*_2}_{\pp\mc T_h}=\sum_{K\in\mc T_h}\dualpr{*_1,*_2}_{\pp K},
\end{align*}
where $(\cdot,\cdot)_K$ and $\dualpr{\cdot,\cdot}_{\pp K}$ denote the $L_2$ inner products on $K$ and $\pp K$ respectively.

\subsection{HDG methods}
Depending on the choices of the approximation spaces $\{W(K),V(K),Q(K)\}_{K\in\mc T_h}$ and $\{N(F),M(F)\}_{F\in\mc E_h}$, we obtain different variants of HDG methods. 
We assume these spaces satisfy the following conditions:
\begin{subequations}\label{eq:hdg_sp}
\begin{alignat}{5}
\label{eq:hdg_sp_1}
\nabla\times V(K)\subset W(K),\\
\label{eq:hdg_sp_2}
\nabla\cdot V(K)\subset Q(K),\\
\label{eq:hdg_sp_3}
\nabla\times W(K)+\nabla Q(K)\subset V(K),\\
\label{eq:hdg_sp_4}
\mb n_{\pp K}\times W(K)\subset N(\pp K),\\
\label{eq:hdg_sp_5}
\gamma_{\pp K}Q(K)+V(K)\cdot\mb n_{\pp K}\subset M(\pp K).
\end{alignat}
\end{subequations}
All the HDG variants we will study in this paper satisfy \eqref{eq:hdg_sp} and we assume these conditions hold throughout the paper. 
We now give the HDG scheme under this general setting:\\
Find $(\mb w_h,\mb u_h,p_h,\widehat{\mb u}_h,\widehat{p}_h)
\in W_h\times V_h\times Q_h\times N_h\times M_h$ such that
\begin{subequations}\label{eq:HDG_scheme}
\begin{alignat}{5}
\label{eq:HDG_scheme_1}
(\mb w_h,\mb r)_{\mc T_h}-(\mb u_h,\nabla\times\mb r)_{\mc T_h}-\dualpr{\widehat{\mb u}_h,\mb r\times\mb n}_{\pp\mc T_h}&=0,\\
\label{eq:HDG_scheme_2}
(\nabla\times\mb w_h,\mb v)_{\mc T_h}+\dualpr{\tau_t\mr P_N(\mb u_h-\widehat{\mb u}_h),\mb v}_{\pp\mc T_h}&\\
\nonumber
-(p_h,\nabla\cdot\mb v)_{\mc T_h}+\dualpr{\widehat{p}_h,\mb v\cdot\mb n}_{\pp\mc T_h}
&=(\mb f,\mb v)_{\mc T_h},\\
\label{eq:HDG_scheme_3}
(\nabla\cdot\mb u_h,q)_{\mc T_h}+\dualpr{\tau_n(p_h-\widehat{p}_h),q}_{\pp\mc T_h}
&=0,\\
\label{eq:HDG_scheme_4}
-\dualpr{\mb n\times\mb w_h+\tau_t(\mb u_h-\widehat{\mb u}_h),\bs\eta}_{\pp\mc T_h\backslash\Gamma}&=0,\\
\label{eq:HDG_scheme_5}
-\dualpr{\widehat{\mb u}_h,\bs\eta}_\Gamma &= -\dualpr{\mb g\times\mb n,\bs\eta}_\Gamma,\\
\label{eq:HDG_scheme_6}
-\dualpr{\mb u_h\cdot\mb n+\tau_n(p_h-\widehat{p}_h),\mu}_{\pp\mc T_h\backslash\Gamma}
&=0,\\
\label{eq:HDG_scheme_7}
-\dualpr{\widehat{p}_h,\mu}_\Gamma&=0,
\end{alignat}
\end{subequations}
for all $(\mb r,\mb v,q,\bs\eta,\mu)
\in W_h\times V_h\times Q_h\times N_h\times M_h$. In the above equations \eqref{eq:HDG_scheme}, the two stabilization functions $\tau_t,\tau_n\in\prod_{K\in\mc T_h}\prod_{F\in\mc E_K}\mc P_0(F)$ and we assume $\tau_t\big|_{\pp K},\tau_n\big|_{\pp K}\ge0$ for all $K\in\mc T_h$. 

\begin{prop}\label{prop:unique_solv}
If $\tau_t,\tau_n>0$ and
\begin{align}\label{eq:prf_2}
\{\mb n\times\mb u\times\mb n:\ \mb u\in V(K),\ \nabla\times\mb u=0\}\subset N(\pp K)
\end{align}
for all $K\in\mc T_h$, then \eqref{eq:HDG_scheme} is uniquely solvable.
\end{prop}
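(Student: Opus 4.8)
The plan is to prove unique solvability by the standard finite-dimensional linear-algebra argument: since the system is square (the test and trial spaces coincide), it suffices to show that the only solution of the homogeneous problem (with $\mb f=0$, $\mb g=0$) is the trivial one. First I would set the right-hand sides to zero and test the equations against the discrete solution itself, i.e. take $(\mb r,\mb v,q,\bs\eta,\mu)=(\mb w_h,\mb u_h,p_h,\widehat{\mb u}_h,\widehat{p}_h)$. The goal is to combine the seven equations into a single energy identity. Adding \eqref{eq:HDG_scheme_1}--\eqref{eq:HDG_scheme_3} and incorporating the trace equations \eqref{eq:HDG_scheme_4}--\eqref{eq:HDG_scheme_7} should cause the bulk coupling terms and the numerical-trace terms to cancel, leaving only the positive squared $L_2$-norm of $\mb w_h$ together with the boundary stabilization terms $\langle\tau_t\mr P_N(\mb u_h-\widehat{\mb u}_h),\mb u_h-\widehat{\mb u}_h\rangle_{\pp\mc T_h}$ and $\langle\tau_n(p_h-\widehat{p}_h),p_h-\widehat{p}_h\rangle_{\pp\mc T_h}$. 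The intended energy identity will read schematically
\begin{align*}
(\mb w_h,\mb w_h)_{\mc T_h}+\dualpr{\tau_t\mr P_N(\mb u_h-\widehat{\mb u}_h),\mb u_h-\widehat{\mb u}_h}_{\pp\mc T_h}+\dualpr{\tau_n(p_h-\widehat{p}_h),p_h-\widehat{p}_h}_{\pp\mc T_h}=0.
\end{align*}
Here I would use that $\mr P_N$ is the $L_2$-projection onto $N(\pp K)$, so $\langle\tau_t\mr P_N(\mb u_h-\widehat{\mb u}_h),\mb u_h-\widehat{\mb u}_h\rangle_{\pp K}=\langle\tau_t\mr P_N(\mb u_h-\widehat{\mb u}_h),\mr P_N(\mb u_h-\widehat{\mb u}_h)\rangle_{\pp K}$, which is a nonnegative quantity under the hypothesis $\tau_t>0$ (and likewise for the $\tau_n$ term with $\tau_n>0$).

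From the energy identity, positivity of each term forces $\mb w_h=0$ on $\Omega$, $\mr P_N(\mb u_h-\widehat{\mb u}_h)=0$, and $p_h-\widehat{p}_h=0$ on $\pp\mc T_h$. With $\mb w_h=0$, equation \eqref{eq:HDG_scheme_1} becomes $(\mb u_h,\nabla\times\mb r)_{\mc T_h}+\dualpr{\widehat{\mb u}_h,\mb r\times\mb n}_{\pp\mc T_h}=0$ for all $\mb r\in W_h$; integrating by parts element-wise and using the inclusion \eqref{eq:hdg_sp_1} that $\nabla\times V(K)\subset W(K)$, I would choose $\mb r=\nabla\times\mb u_h$ to conclude that $\nabla\times\mb u_h=0$ on each $K$. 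The crucial use of hypothesis \eqref{eq:prf_2} enters here: since $\mb u_h\in V(K)$ with $\nabla\times\mb u_h=0$, the tangential trace $(\mb u_h)^t$ lies in $N(\pp K)$, so that $\mr P_N$ acts as the identity on the relevant tangential component and the condition $\mr P_N(\mb u_h-\widehat{\mb u}_h)=0$ upgrades to the genuine equality of tangential traces $\mr P_N\mb u_h=\widehat{\mb u}_h$, with no information lost to the projection. Analogously, I would exploit \eqref{eq:HDG_scheme_3} with $\mb w_h=0$ and the divergence inclusions to control $\nabla\cdot\mb u_h$ and $p_h$.

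The main obstacle I anticipate is the final step of concluding that the cleaned-up, projection-free system behaves like the \emph{exact} Maxwell system so that one may invoke uniqueness of its solution. Concretely, after establishing $\mb w_h=0$, $\nabla\times\mb u_h=0$, matching tangential traces, and the vanishing of the jumps $p_h-\widehat{p}_h$, I would need to show $\mb u_h$ and $p_h$ themselves vanish. The strategy is to recognize that the surviving equations, together with the boundary conditions enforced through \eqref{eq:HDG_scheme_5} and \eqref{eq:HDG_scheme_7} (which give $\widehat{\mb u}_h=0$ and $\widehat{p}_h=0$ on $\Gamma$), and the single-valuedness of the numerical traces across interior faces encoded in \eqref{eq:HDG_scheme_4} and \eqref{eq:HDG_scheme_6}, reconstitute a curl-free, divergence-free field $\mb u_h$ with vanishing tangential trace on $\Gamma$ over the simply connected domain $\Omega$. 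Since $\Omega$ is simply connected with connected boundary, a curl-free field is a gradient and the divergence-free plus boundary conditions force it to be zero; this is exactly where the topological hypotheses on $\Omega$ stated at the model-problem stage are indispensable. I expect the delicate bookkeeping to be verifying that the cancellations in the energy identity are exact and that the inclusions \eqref{eq:hdg_sp_1}--\eqref{eq:hdg_sp_5} are each invoked at the right place so that every test-function choice I make is admissible; once the energy identity and the role of \eqref{eq:prf_2} are secured, the remainder follows the classical de Rham / Poincaré argument for the static Maxwell system.
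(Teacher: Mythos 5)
Your proposal is correct and follows essentially the same route as the paper: the same energy identity (the paper merely tests \eqref{eq:HDG_scheme_5} and \eqref{eq:HDG_scheme_7} with $\mb n\times\mb w_h+\tau_t(\mr P_N\mb u_h-\widehat{\mb u}_h)$ and $\mb u_h\cdot\mb n+\tau_n(p_h-\widehat p_h)$ so the $\Gamma$-terms cancel outright, whereas your choice first yields $\widehat{\mb u}_h|_\Gamma=0$ and $\widehat p_h|_\Gamma=0$, which achieves the same cancellation), followed by the same chain: \eqref{eq:HDG_scheme_1} plus \eqref{eq:hdg_sp_1}, \eqref{eq:hdg_sp_4} gives $\nabla\times\mb u_h=0$ elementwise, hypothesis \eqref{eq:prf_2} upgrades $\mr P_N\mb u_h=\widehat{\mb u}_h$ to a genuine tangential-trace match, and the curl-free/divergence-free field with vanishing tangential trace on the simply connected $\Omega$ must vanish. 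The one under-specified point is $p_h$: equation \eqref{eq:HDG_scheme_3} controls $\nabla\cdot\mb u_h$ but says nothing new about $p_h$; to conclude $p_h\equiv0$ you must go back to \eqref{eq:HDG_scheme_2} (with $\mb w_h=0$ and the vanishing stabilization and jump terms) to obtain $(\nabla p_h,\mb v)_{\mc T_h}=0$ for all $\mb v\in V_h$, invoke $\nabla Q(K)\subset V(K)$ from \eqref{eq:hdg_sp_3} to make $p_h$ piecewise constant, and then use $p_h=\widehat p_h$ together with \eqref{eq:HDG_scheme_7} to kill the constant --- a step your plan gestures at but does not identify.
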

\begin{proof}
It is obvious that \eqref{eq:HDG_scheme} is a square system. Let $\mb f=0$ and $\mb g=0$, we aim to show that the system only admits trivial solution. By taking $\mb r=\mb w_h$, $\mb v=\mb u_h$, $q=p_h$ in equations \eqref{eq:HDG_scheme_1}-\eqref{eq:HDG_scheme_3}, $\bs\eta=\widehat{\mb u}_h$ in \eqref{eq:HDG_scheme_4} and $\bs\eta=\mb n\times\mb w_h+\tau_t(\mb P_N\mb u_h-\widehat{\mb u}_h)$ in \eqref{eq:HDG_scheme_5}, $\mu=\widehat{p}_h$ in \eqref{eq:HDG_scheme_6} and $\mu=\mb u_h\cdot\mb n+\tau_n(p_h-\widehat{p}_h)$ in \eqref{eq:HDG_scheme_7}, then adding up all these equations, we obtain
\begin{equation}
(\mb w_h,\mb w_h)_{\mc T_h}+\dualpr{\tau_t(\mb P_N\mb u_h-\widehat{\mb u}_h),(\mb P_N\mb u_h-\widehat{\mb u}_h)}_{\pp\mc T_h}
+\dualpr{\tau_n(p_h-\widehat{p}_h),(p_h-\widehat{p}_h)}_{\pp\mc T_h}=0.
\end{equation}
Since $\tau_t,\tau_n>0$, we have
\begin{align}\label{eq:prf_1}
\mb w_h=0,\quad \mb P_N\mb u_h-\widehat{\mb u}_h=0,\quad p_h-\widehat{p}_h=0.
\end{align}
Equation \eqref{eq:prf_1} with \eqref{eq:HDG_scheme_2} implies $(\nabla p_h,\mb v)_{\mc T_h}=0$ for all $\mb v\in V_h$. This with \eqref{eq:hdg_sp_3} implies $p_h\equiv c_K$ on any $K\in\mc T_h$. Now we use the fact that $p_h-\widehat{p}_h=0$ and obtain $p_h\equiv c$ on $\Omega$, which with \eqref{eq:HDG_scheme_7} implies $p_h\equiv0$.

By \eqref{eq:prf_1}, \eqref{eq:HDG_scheme_1} and \eqref{eq:hdg_sp_4}, we have
\begin{align*}
0=\dualpr{\mb P_N\mb u_h-\widehat{\mb u}_h,\mb r\times\mb n}_{\pp\mc T_h}=
(\nabla\times\mb u_h,\mb r)_{\mc T_h}\quad\forall\mb r\in W_h.
\end{align*}
The above equation with \eqref{eq:hdg_sp_1} implies $(\nabla\times)\big|_K\mb u_h=0$. Now, by \eqref{eq:prf_2}, we have $\widehat{\mb u}_h=\mb P_N\mb u_h=\mb u_h^t$. This with $(\nabla\times)\big|_K\mb u_h=0$ implies $\nabla\times\mb u_h=0$ on $\Omega$. On the other hand, by \eqref{eq:HDG_scheme_3}, \eqref{eq:HDG_scheme_6}, \eqref{eq:hdg_sp_5}
 and the fact that $p_h-\widehat{p}_h=0$ (see \eqref{eq:prf_1}), we have $\div\mb u_h=0$ on $\Omega$. In conclusion, we have obtained
\begin{align*}
\nabla\times\mb u_h=0,\ \div\mb u_h=0,\ \mb n_{\pp\Omega}\times\mb u_h=0.
\end{align*}
This proves that $\mb u_h=0$ since we have assumed that $\Omega$ is a simply connected domain with connected Lipschitz boundary.
\end{proof}

Note that Proposition \ref{prop:unique_solv} gives a sufficient condition for the unique solvability of a general class of HDG methods. However, this condition is not necessary if we know more about the numerical scheme. For instance, if $K\in\mc T_h$ is a tetrahedron, $\mb V(K)=\mc P_{k+1}(K)^3$ and $Q(K)=\mc P_{k}(K)$, we can choose $\tau_n=0$ and still obtain unique solvability. The proof of this fact will be similar to the proof of the unique solvability of the BDM method.

\subsection{Projections and remainders}
The key in our analysis is finding projections satisfying the following Assumption \ref{asmp:proj}. 
Here, under this general setting, we shall just assume the projection exists and proceed the analysis.
We remark that these projections are not unique in most cases and our target is to find the projections that can well fit the structures of the approximation spaces and therefore give sharp estimates. 
\begin{assmp}[Projection assumption]\label{asmp:proj} For all $K\in\mc T_h$, there exists a projection
\begin{align*}
\Pi_K: H^1(K)^3\times H^1(K)^3\times H^1(K)&\rightarrow W(K)\times V(K)\times Q(K)\\
(\mb w,\mb u,p)&\mapsto (\Pi_K\mb w,\Pi_K\mb u,\Pi_K p),
\end{align*}
such that
\begin{subequations}\label{eq:rq_proj}
\begin{alignat}{5}
\label{eq:rq_proj_1}
&(\Pi_K\mb w-\mb w,\nabla\times\mb v)_K=\dualpr{\mb n\times\mb w-\mr P_N(\mb n\times\mb w),\mb v}_{\pp K} \quad \forall\mb v\in V(K),\\
\label{eq:rq_proj_2}
&(\Pi_K\mb u-\mb u,\mb v)_K=0 \quad \forall\mb v\in\nabla\times W(K)+\nabla Q(K),\\
\label{eq:rq_proj_3}
&(\Pi_K p-p,\nabla\cdot\mb v)_K=0 \quad \forall\mb v\in V(K).
\end{alignat}
\end{subequations}
\end{assmp}
Note that if we have $\mb n_{\pp K}\times V(K)\times\mb n_{\pp K}\subset N(\pp K)$, then
\eqref{eq:rq_proj_1} becomes
\begin{align*}
(\Pi_K\mb w-\mb w,\nabla\times\mb v)_K=0\qquad \forall\mb v\in V(K).
\end{align*}
In this case, Assumption \ref{asmp:proj} holds obviously as a result of \eqref{eq:hdg_sp_1}-\eqref{eq:hdg_sp_3}, since the $L^2$ projection to $W(K)\times V(K)\times Q(K)$ satisfies \eqref{eq:rq_proj}.
In addition, we have used $\Pi_K\mb w$, $\Pi_K u$, and $\Pi_K p$ to represent the first, second, and third component of the projection $\Pi_K$, respectively. Hence $\Pi_K\mb w$ can depend on $\mb u$ and $p$ as well, and this clarification works similarly for $\Pi_K\mb u$ and $\Pi_Kp$.

For all the HDG variants we will study in this paper, Assumption \ref{asmp:proj} is satisfied. Namely, we can explicitly construct  projections that satisfy \eqref{eq:rq_proj_1}-\eqref{eq:rq_proj_3}. We will do this in Section \ref{sec:un_err_ana}. 

We next define two operators associated to the projection $\Pi_K$.

\begin{defi}[Boundary remainders]
For all $K\in\mc T_h$, we define two operators as follows:
\begin{subequations}
\begin{alignat}{5}
\nonumber
\bs\delta_{\pm\tau_t}^{\Pi_K}: H^1(K)^3\times H^1(K)^3&\rightarrow  N(\pp K)\\
\label{eq:def_bdrm_1}
(\mb w,\mb u)&\mapsto\mb n\times\Pi_K\mb w-\mr P_N(\mb n\times\mb w)\pm\tau_t(\mr P_N\Pi_K\mb u-\mr P_N\mb u),\\
\nonumber
\delta_{\pm\tau_n}^{\Pi_K}: H^1(K)^3\times H^1(K)&\rightarrow  M(\pp K)\\
\label{eq:def_bdrm_2}
(\mb u,p)&\mapsto\Pi_K\mb u\cdot\mb n-\mr P_M(\mb u\cdot\mb n)\pm\tau_n(\Pi_K p-\mr P_Mp).
\end{alignat}
\end{subequations}
We call $\bs\delta_{\pm\tau_t}^{\Pi_K}$ the curl-curl boundary remainder and $\delta_{\pm\tau_n}^{\Pi_K}$ the grad-div boundary remainder.
\end{defi}
By \eqref{eq:hdg_sp_4} and \eqref{eq:hdg_sp_5}, 
it is easy to see that the above definition is valid. 
The boundary remainder operators can be regarded as an indicator for how much the projection $\Pi_K$ resembles an HDG projection or a mixed method projection.
Consider the grad-div boundary remainder $\delta_{\tau_n}^{\Pi_K}$.
If we let the second-third component of $\Pi_K$, namely $(\Pi_K\mb u,\Pi_Kp)$, to be replaced by the HDG projection with stabilization function $\tau$, then $\delta_{\tau_n=\tau}^{\Pi_K}=0$ (holds by definition; see \cite{CoGoSa:2010}); if the second-third component is replaced by the Raviart-Thomas (RT) and the Brezzi-Douglas-Marini (BDM) projections \cite{BrDoMa:1985,RaTh:1977} (\cite{Ne:1980,Ne:1986} by N\'{e}d\'{e}lec for $\mbb R^3$ case), we have $\delta_{\tau_n=0}^{\Pi_K}=0$. On the other hand, if the first-second component of $\Pi_K$ is replaced by edge element associated projections ($H(\mr{curl})$ projections \cite{Ne:1980,Ne:1986}), then we have $\bs\delta_{\tau_t=0}^{\Pi_K}=0$.

The following Lemma gives two identities further relating the projection $\Pi_K$ and its associated two boundary remainders.
\begin{lemma}[Weak-commutativity]
For all $K\in\mc T_h$, denote by $\bs\delta_{\pm\tau_t}^{\Pi_K}:=\bs\delta_{\pm\tau_t}^{\Pi_K}(\mb w,\mb u)$
and $\delta_{\pm\tau_n}^{\Pi_K}:=\delta_{\pm\tau_n}^{\Pi_K}(\mb u,p)$ for simplicity.
Then
\begin{subequations}
\label{eq:id_wk_cm}
\begin{alignat}{5}
\label{eq:id_wk_cm_1}
(\nabla\times(\Pi_K\mb w-\mb w),\mb v)_K
\pm\dualpr{\tau_t(\mr P_N\Pi_K\mb u-\mr P_N\mb u),\mb v}_{\pp K}&=\dualpr{\bs\delta_{\pm\tau_t}^{\Pi_K},\mb v}_{\pp K}\quad\forall \mb v\in V(K),\\
\label{eq:id_wk_cm_2}
(\nabla\cdot(\Pi_K\mb u-\mb u),q)_K\pm\dualpr{\tau_n(\Pi_K p-\mr P_Mp),q}_{\pp K}
&=\dualpr{\delta_{\pm\tau_n}^{\Pi_K},q}_{\pp K}\quad\forall q\in Q(K).
\end{alignat}
\end{subequations}
\end{lemma}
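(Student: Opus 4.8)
The plan is to derive both identities directly from integration by parts combined with the defining properties of the projection in Assumption \ref{asmp:proj}; no deep machinery is needed, and the whole argument is a careful bookkeeping of boundary terms. The two statements are structurally parallel: \eqref{eq:id_wk_cm_1} uses a curl integration-by-parts formula together with \eqref{eq:rq_proj_1}, while \eqref{eq:id_wk_cm_2} uses a divergence integration-by-parts formula together with \eqref{eq:rq_proj_2}.

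For \eqref{eq:id_wk_cm_1}, I would start from the vector identity $(\nabla\times\mb a,\mb v)_K=(\mb a,\nabla\times\mb v)_K+\dualpr{\mb n\times\mb a,\mb v}_{\pp K}$, valid for $\mb a,\mb v\in H^1(K)^3$ and obtained from $\nabla\cdot(\mb a\times\mb v)=\mb v\cdot(\nabla\times\mb a)-\mb a\cdot(\nabla\times\mb v)$ together with the divergence theorem and the cyclic identity $(\mb a\times\mb v)\cdot\mb n=(\mb n\times\mb a)\cdot\mb v$. Applying this with $\mb a=\Pi_K\mb w-\mb w$ gives $(\nabla\times(\Pi_K\mb w-\mb w),\mb v)_K=(\Pi_K\mb w-\mb w,\nabla\times\mb v)_K+\dualpr{\mb n\times(\Pi_K\mb w-\mb w),\mb v}_{\pp K}$. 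The volume term on the right is exactly the left-hand side of \eqref{eq:rq_proj_1}, so I replace it by $\dualpr{\mb n\times\mb w-\mr P_N(\mb n\times\mb w),\mb v}_{\pp K}$. Adding the two boundary pairings, the $\mb n\times\mb w$ contributions cancel and I am left with $\dualpr{\mb n\times\Pi_K\mb w-\mr P_N(\mb n\times\mb w),\mb v}_{\pp K}$. Finally, adding $\pm\dualpr{\tau_t(\mr P_N\Pi_K\mb u-\mr P_N\mb u),\mb v}_{\pp K}$ to both sides and comparing with \eqref{eq:def_bdrm_1} identifies the right-hand side as $\dualpr{\bs\delta_{\pm\tau_t}^{\Pi_K},\mb v}_{\pp K}$.

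For \eqref{eq:id_wk_cm_2}, I would proceed analogously with the scalar rule $(\nabla\cdot\mb a,q)_K=-(\mb a,\nabla q)_K+\dualpr{\mb a\cdot\mb n,q}_{\pp K}$ applied to $\mb a=\Pi_K\mb u-\mb u$. Since $\nabla q\in\nabla Q(K)$ for $q\in Q(K)$, the volume term $(\Pi_K\mb u-\mb u,\nabla q)_K$ vanishes by \eqref{eq:rq_proj_2}, leaving $(\nabla\cdot(\Pi_K\mb u-\mb u),q)_K=\dualpr{(\Pi_K\mb u-\mb u)\cdot\mb n,q}_{\pp K}$. The one extra ingredient here is that, by \eqref{eq:hdg_sp_5}, the trace of $q$ on $\pp K$ lies in $M(\pp K)$, so the $L^2$-projection property of $\mr P_M$ lets me replace $\dualpr{\mb u\cdot\mb n,q}_{\pp K}$ by $\dualpr{\mr P_M(\mb u\cdot\mb n),q}_{\pp K}$; this turns the boundary term into $\dualpr{\Pi_K\mb u\cdot\mb n-\mr P_M(\mb u\cdot\mb n),q}_{\pp K}$. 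Adding $\pm\dualpr{\tau_n(\Pi_K p-\mr P_Mp),q}_{\pp K}$ and comparing with \eqref{eq:def_bdrm_2} yields $\dualpr{\delta_{\pm\tau_n}^{\Pi_K},q}_{\pp K}$, as required.

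There is no genuine obstacle here; the identities are essentially a reorganization of the projection assumption. The only points demanding care are the signs in the two integration-by-parts formulas and the verification that each boundary remainder genuinely lies in $N(\pp K)$ (resp. $M(\pp K)$): this relies on \eqref{eq:hdg_sp_4}, which gives $\mb n\times\Pi_K\mb w\in N(\pp K)$, and on \eqref{eq:hdg_sp_5}, which gives $\Pi_K\mb u\cdot\mb n\in M(\pp K)$ and $q|_{\pp K}\in M(\pp K)$. Note the mild asymmetry between the two cases: in the curl identity the projector $\mr P_N$ already enters through \eqref{eq:rq_proj_1}, whereas in the divergence identity the projector $\mr P_M$ must be inserted by hand using the trace membership; this is the only subtlety to watch when matching the expressions against the definitions of $\bs\delta_{\pm\tau_t}^{\Pi_K}$ and $\delta_{\pm\tau_n}^{\Pi_K}$.
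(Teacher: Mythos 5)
Your proof is correct and follows essentially the same route as the paper: integrate by parts (the curl identity for \eqref{eq:id_wk_cm_1}, the divergence identity for \eqref{eq:id_wk_cm_2}), replace the resulting volume term using \eqref{eq:rq_proj_1} (resp.\ kill it using \eqref{eq:rq_proj_2}), insert $\mr P_M$ via \eqref{eq:hdg_sp_5}, and match the boundary terms against the definitions \eqref{eq:def_bdrm_1}--\eqref{eq:def_bdrm_2}. The paper's proof is just a terser version of the same computation, so nothing further is needed.
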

\begin{proof}
First note that
\begin{align*}
&(\nabla\times(\Pi_K\mb w-\mb w),\mb v)_K
\pm\dualpr{\tau_t(\mr P_N\Pi_K\mb u-\mr P_N\mb u),\mb v}_{\pp K}\\
&\qquad=\dualpr{\mb n\times(\Pi_K\mb w-\mb w)\pm\tau_t(\mr P_N\Pi_K\mb u-\mr P_N\mb u),\mb v}_{\pp K}
+(\Pi_K\mb w-\mb w,\nabla\times\mb v)_K,
\end{align*}
for all $\mb v\in V(K)$. Equation \eqref{eq:id_wk_cm_1} now follows by using \eqref{eq:rq_proj_1}. Equation \eqref{eq:id_wk_cm_2} can be similarly obtained by using \eqref{eq:rq_proj_2} and \eqref{eq:hdg_sp_5}.
\end{proof}

\subsection{Estimates}\label{sec:general_set_estimate}
\newparagraph{Energy estimates.} To proceed with the analysis,
we assume Assumption \ref{asmp:proj} is satisfied so that we have a projection satisfying \eqref{eq:rq_proj_1}-\eqref{eq:rq_proj_3} for each $K\in\mc T_h$. We next
define the elementwise projections and associated boundary remainders:
\begin{align*}
(\Pi\mb w,\Pi\mb u,\Pi p)=\prod_{K\in\mc T_h}\Pi_K(\mb w,\mb u,p),\quad
\bs\delta_{\tau_t}^\Pi=\prod_{K\in\mc T_h}\bs\delta_{\tau_t}^{\Pi_K}(\mb w,\mb u),\quad
\delta_{\tau_n}^\Pi=\prod_{K\in\mc T_h}\delta_{\tau_n}^{\Pi_K}(\mb u,p).
\end{align*}
We also define the error terms to simplify notation:
\begin{align*}
\bs\varepsilon_h^w=\Pi\mb w-\mb w_h,\ 
\bs\varepsilon_h^u=\Pi\mb u-\mb u_h,\ 
\varepsilon_h^p=\Pi p-p_h,\ 
\widehat{\bs\varepsilon}_h^u=\mr P_N\mb u-\widehat{\mb u}_h,\ 
\widehat{\varepsilon}_h^p=\mr P_Mp - \widehat{p}_h. 
\end{align*}
Note that
\begin{align*}
(\bs\varepsilon_h^w,\bs\varepsilon_h^u,\varepsilon_h^p,\widehat{\bs\varepsilon}_h^u,\widehat{\varepsilon}_h^p)&\in W_h\times V_h\times Q_h\times N_h\times M_h,\\
(\bs\delta_{\tau_t}^\Pi,\delta_{\tau_n}^\Pi)&\in\prod_{K\in\mc T_h}N(\pp K)\times \prod_{K\in\mc T_h}M(\pp K).
\end{align*}

For the following two Propositions (Props. \ref{prop:ene_id} and \ref{prop:duality_id}), we put their proofs in the appendix. Once the HDG variants are specified, we can immediately obtain the $L^2$ error estimates of $\mb w_h$ and $\mb u_h$ by using these two propositions.

\begin{prop}[Energy identity]\label{prop:ene_id}
The following energy identity holds
\begin{align}\label{eq:ene_id}
&(\bs\varepsilon_h^w,\bs\varepsilon_h^w)_{\mc T_h}
+\dualpr{\tau_t(\mr P_N\bs\varepsilon_h^u-\widehat{\bs\varepsilon}_h^u),\mr P_N\bs\varepsilon_h^u-\widehat{\bs\varepsilon}_h^u}_{\pp\mc T_h}
+\dualpr{\tau_n(\varepsilon_h^p-\widehat{\varepsilon}_h^p),\varepsilon_h^p-\widehat{\varepsilon}_h^p}_{\pp\mc T_h}\\
\nonumber
&=(\Pi\mb w-\mb w,\bs\varepsilon_h^w)_{\mc T_h}+\dualpr{\bs\delta_{\tau_t}^\Pi,\mr P_N\bs\varepsilon_h^u-\widehat{\bs\varepsilon}_h^u}_{\pp\mc T_h}+\dualpr{\delta_{\tau_n}^\Pi,\varepsilon_h^p-\widehat{\varepsilon}_h^p}_{\pp\mc T_h}.
\end{align}
\end{prop}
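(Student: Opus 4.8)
The plan is to run the same energy argument as in the proof of Proposition \ref{prop:unique_solv}, but on the error between the \emph{projected} exact solution and the discrete solution, carrying along the consistency defects. First I would establish error equations for $(\bs\varepsilon_h^w,\bs\varepsilon_h^u,\varepsilon_h^p,\widehat{\bs\varepsilon}_h^u,\widehat{\varepsilon}_h^p)$ by subtracting the scheme \eqref{eq:HDG_scheme} from the identities that $(\Pi\mb w,\Pi\mb u,\Pi p,\mr P_N\mb u,\mr P_Mp)$ satisfies. Then I would test these error equations with the errors themselves, choosing the test functions exactly as in Proposition \ref{prop:unique_solv} (namely $\mb r=\bs\varepsilon_h^w$, $\mb v=\bs\varepsilon_h^u$, $q=\varepsilon_h^p$, $\bs\eta=\widehat{\bs\varepsilon}_h^u$ on interior faces and $\bs\eta=\mb n\times\bs\varepsilon_h^w+\tau_t(\mr P_N\bs\varepsilon_h^u-\widehat{\bs\varepsilon}_h^u)$ on $\Gamma$, and analogously for $\mu$), and add everything together.

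The heart of the matter is the derivation of the error equations, which is exactly where Assumption \ref{asmp:proj} and the weak-commutativity lemma are consumed. For \eqref{eq:HDG_scheme_1} I would integrate $(\mb u,\nabla\times\mb r)$ by parts, use \eqref{eq:rq_proj_2} to replace $\Pi\mb u$ by $\mb u$ against $\nabla\times\mb r\in\nabla\times W(K)$, and use $\mb r\times\mb n\in N(\pp K)$ (from \eqref{eq:hdg_sp_4}) to annihilate the boundary defect $\dualpr{\mb u-\mr P_N\mb u,\mb r\times\mb n}_{\pp K}$; the only surviving term is the volume defect $(\Pi\mb w-\mb w,\mb r)_{\mc T_h}$. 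For \eqref{eq:HDG_scheme_2} I would use the PDE \eqref{eq:PDE_2} together with \eqref{eq:rq_proj_3} and \eqref{eq:hdg_sp_5} to cancel the load and the pressure/flux terms, leaving precisely $(\nabla\times(\Pi\mb w-\mb w),\mb v)_K+\dualpr{\tau_t(\mr P_N\Pi\mb u-\mr P_N\mb u),\mb v}_{\pp K}$, which \eqref{eq:id_wk_cm_1} identifies with $\dualpr{\bs\delta_{\tau_t}^\Pi,\mb v}_{\pp K}$; similarly \eqref{eq:HDG_scheme_3} yields $\dualpr{\delta_{\tau_n}^\Pi,q}_{\pp K}$ via \eqref{eq:id_wk_cm_2}. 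For the conservativity equations \eqref{eq:HDG_scheme_4} and \eqref{eq:HDG_scheme_6}, I would rewrite the projected traces through \eqref{eq:def_bdrm_1}--\eqref{eq:def_bdrm_2} as $\mr P_N(\mb n\times\mb w)+\bs\delta_{\tau_t}^\Pi$ and $\mr P_M(\mb u\cdot\mb n)+\delta_{\tau_n}^\Pi$, and then use the single-valuedness of $\mb n\times\mb w$ and $\mb u\cdot\mb n$ for the exact solution across interior faces (this is where the regularity of $(\mb w,\mb u)$ is needed) to drop the $\mr P_N(\mb n\times\mb w)$ and $\mr P_M(\mb u\cdot\mb n)$ contributions. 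Finally, the boundary equations \eqref{eq:HDG_scheme_5} and \eqref{eq:HDG_scheme_7}, combined with the data \eqref{eq:PDE_4}--\eqref{eq:PDE_5} and the identity $\mb u^t=\mb g\times\mb n$, force $\widehat{\bs\varepsilon}_h^u=0$ and $\widehat{\varepsilon}_h^p=0$ on $\Gamma$.

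With the error equations in hand, summing the tested identities reproduces the cancellation structure of Proposition \ref{prop:unique_solv}: the two volume couplings $\mp(\bs\varepsilon_h^u,\nabla\times\bs\varepsilon_h^w)_{\mc T_h}$ cancel, as do $\mp(\varepsilon_h^p,\nabla\cdot\bs\varepsilon_h^u)_{\mc T_h}$; the $\mb n\times\bs\varepsilon_h^w$ and $\bs\varepsilon_h^u\cdot\mb n$ interface contributions cancel between the volume and the trace equations; and the stabilization terms assemble into $\dualpr{\tau_t(\mr P_N\bs\varepsilon_h^u-\widehat{\bs\varepsilon}_h^u),\mr P_N\bs\varepsilon_h^u-\widehat{\bs\varepsilon}_h^u}_{\pp\mc T_h}$ and $\dualpr{\tau_n(\varepsilon_h^p-\widehat{\varepsilon}_h^p),\varepsilon_h^p-\widehat{\varepsilon}_h^p}_{\pp\mc T_h}$, leaving $(\bs\varepsilon_h^w,\bs\varepsilon_h^w)_{\mc T_h}$ alone on the left. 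On the right the defects collect to $(\Pi\mb w-\mb w,\bs\varepsilon_h^w)_{\mc T_h}$ together with $\dualpr{\bs\delta_{\tau_t}^\Pi,\mr P_N\bs\varepsilon_h^u}_{\pp\mc T_h}-\dualpr{\bs\delta_{\tau_t}^\Pi,\widehat{\bs\varepsilon}_h^u}_{\pp\mc T_h\backslash\Gamma}$ and the analogous grad-div pair.

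The main obstacle is the bookkeeping at two points. First, throughout the matching one must use that $\mr P_N$ is the $L^2$-projection onto $N(\pp K)$: since $\bs\delta_{\tau_t}^\Pi$ and each $\tau_t(\cdots)$ lie in $N(\pp K)$, their pairings with $\bs\varepsilon_h^u$ equal their pairings with $\mr P_N\bs\varepsilon_h^u$, and $\tau_t\bs\eta\in N(\pp K)$ lets one pass freely between $\mb u_h-\widehat{\mb u}_h$ and $\mr P_N\mb u_h-\widehat{\mb u}_h$ (and likewise $\mr P_M$ in the grad-div part). Second, the right-hand side of the target \eqref{eq:ene_id} pairs $\bs\delta_{\tau_t}^\Pi$ with $\mr P_N\bs\varepsilon_h^u-\widehat{\bs\varepsilon}_h^u$ over the \emph{full} skeleton $\pp\mc T_h$, whereas the tested conservativity equations only deliver $\widehat{\bs\varepsilon}_h^u$ over $\pp\mc T_h\backslash\Gamma$; reconciling them requires exactly the facts $\widehat{\bs\varepsilon}_h^u=0$ and $\widehat{\varepsilon}_h^p=0$ on $\Gamma$ proved above, which make $\dualpr{\bs\delta_{\tau_t}^\Pi,\widehat{\bs\varepsilon}_h^u}_\Gamma$ and $\dualpr{\delta_{\tau_n}^\Pi,\widehat{\varepsilon}_h^p}_\Gamma$ vanish. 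Once these two points are settled, the identity \eqref{eq:ene_id} follows.
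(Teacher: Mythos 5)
Your proposal is correct and follows essentially the same route as the paper: derive the defect equations satisfied by $(\Pi\mb w,\Pi\mb u,\Pi p,\mr P_N\mb u,\mr P_Mp)$ from Assumption \ref{asmp:proj}, the weak-commutativity identities \eqref{eq:id_wk_cm}, and the single-valuedness of the exact traces; subtract the scheme to get error equations; then test with the errors themselves and sum, using $\widehat{\bs\varepsilon}_h^u=0$ and $\widehat{\varepsilon}_h^p=0$ on $\Gamma$ to pass from $\pp\mc T_h\backslash\Gamma$ to $\pp\mc T_h$. The paper merely packages the two intermediate steps as separate lemmas (equations \eqref{eq:proj_eq} and \eqref{eq:err_eqns}); the substance, including both of your bookkeeping points, is identical.
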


From the above identity, we can obtain an estimate for $\|\mb w_h-\mb w\|_{\mc T_h}$.

\newparagraph{Duality estimates.}
To estimate $\mb u_h$, we consider the dual equations
\begin{subequations}\label{eq:dual_eq}
\begin{alignat}{5}
\mb w^*+\nabla\times\mb u^* &= 0 &\quad& \mr{in}\ \Omega,\\
-\nabla\times\mb w^*-\nabla p^*&= \bs\theta &&\mr{in}\ \Omega,\\
-\nabla\cdot \mb u^* &=0 && \mr{in}\ \Omega,\\
\mb n\times\mb u^*&=\mb 0 &&\mr{on}\ \Gamma,\\
p^* &= 0 &&\mr{on}\ \Gamma.
\end{alignat}
\end{subequations}

\begin{assmp}[Regularity assumption]\label{asmp:ell_reg}
The following inequality holds
\begin{align}\label{eq:ell_reg}
\|\mb w^*\|_{r_1,\Omega}+\|\mb u^*\|_{r_2,\Omega}+\|p^*\|_{r_3,\Omega}
\le C_\mr{reg}\|\bs\theta\|_\Omega,
\end{align}
for any $\bs\theta\in L^2(\Omega)^3$. Here $r_1,r_2\in(1/2,\infty)$, $r_3\ge1$ and $C_\mr{reg}$ is a constant depending only on $\Omega$.
\end{assmp}

Let $\Pi_K^*$ be another projection satisfying Assumption \ref{asmp:proj}. Note that it is allowed to choose $\Pi_K^*=\Pi_K$. Define 
\begin{align*}
&(\Pi^*\mb w^*,\Pi^*\mb u^*,\Pi^* p^*)=\prod_{K\in\mc T_h}\Pi_K^*(\mb w^*,\mb u^*,p^*),\\
&\bs\delta_{-\tau_t}^{\Pi^*}=\prod_{K\in\mc T_h}\bs\delta_{-\tau_t}^{\Pi_K^*}(\mb w^*,\mb u^*),\quad
\delta_{-\tau_n}^{\Pi^*}=\prod_{K\in\mc T_h}\delta_{-\tau_n}^{\Pi_K^*}(\mb u^*,p^*).
\end{align*}

\begin{prop}[Duality identity] \label{prop:duality_id}The following identity holds
\begin{align}\label{eq:duality_id}
&(\Pi\mb w-\mb w,\Pi^*\mb w^*)_{\mc T_h}+\dualpr{\bs\delta_{\tau_t}^\Pi,\Pi^*\mb u^*-\mr P_N\mb u^*}_{\pp\mc T_h}
+\dualpr{\delta_{\tau_n}^\Pi,\Pi^*p^*-\mr P_Mp^*}_{\pp\mc T_h}\\
\nonumber
&=(\Pi^*\mb w^*-\mb w^*,\Pi\mb w-\mb w_h)_{\mc T_h}-\dualpr{\bs\delta_{-\tau_t}^{\Pi^*},\mr P_N\bs\varepsilon_h^u-\widehat{\bs\varepsilon}_h^u}_{\pp\mc T_h}
-\dualpr{\delta_{-\tau_n}^{\Pi^*},\varepsilon_h^p-\widehat{\varepsilon}_h^p}_{\pp\mc T_h}\\
\nonumber
&\quad\,\,+(\bs\theta,\bs\varepsilon_h^u)_{\mc T_h}.
\end{align}
\end{prop}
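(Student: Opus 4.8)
The plan is to run a duality (Aubin--Nitsche) argument resting on the same error equations that drive the energy identity. First I would record the \emph{primal error equations}: inserting the projected exact solution $(\Pi\mb w,\Pi\mb u,\Pi p,\mr P_N\mb u,\mr P_M p)$ into the scheme \eqref{eq:HDG_scheme} and subtracting the scheme satisfied by the discrete solution, the consistency of \eqref{eq:PDE}, the projection relations \eqref{eq:rq_proj_1}--\eqref{eq:rq_proj_3}, and the weak-commutativity identities \eqref{eq:id_wk_cm_1}--\eqref{eq:id_wk_cm_2} collapse every interior residual into the volume term $(\Pi\mb w-\mb w,\cdot)_{\mc T_h}$ and the $+\tau$ boundary remainders $\bs\delta_{\tau_t}^\Pi$, $\delta_{\tau_n}^\Pi$. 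The skeleton equations \eqref{eq:HDG_scheme_5} and \eqref{eq:HDG_scheme_7} additionally force $\widehat{\bs\varepsilon}_h^u=\mb 0$ and $\widehat\varepsilon_h^p=0$ on $\Gamma$, a fact I will need later.

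Next I would test these error equations against the projected \emph{dual} solution: take $\mb r=\Pi^*\mb w^*$ in the first equation, $\mb v=\Pi^*\mb u^*$ in the second, $q=\Pi^*p^*$ in the third, $\bs\eta=\mr P_N\mb u^*$ in the fourth, and $\mu=\mr P_M p^*$ in the sixth. Since $\mb n\times\mb u^*=\mb 0$ and $p^*=0$ on $\Gamma$, the traces $\mr P_N\mb u^*$ and $\mr P_M p^*$ vanish there, so the fourth and sixth sums may be extended to all of $\pp\mc T_h$. Combining the second with the fourth equation reconstructs $\dualpr{\bs\delta_{\tau_t}^\Pi,\Pi^*\mb u^*-\mr P_N\mb u^*}_{\pp\mc T_h}$, combining the third with the sixth reconstructs $\dualpr{\delta_{\tau_n}^\Pi,\Pi^*p^*-\mr P_M p^*}_{\pp\mc T_h}$, and the first equation produces $(\Pi\mb w-\mb w,\Pi^*\mb w^*)_{\mc T_h}$; that is, the whole left-hand side of \eqref{eq:duality_id}. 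The remaining contributions assemble into a volume form $V:=(\bs\varepsilon_h^w,\Pi^*\mb w^*+\curl\Pi^*\mb u^*)_{\mc T_h}-(\bs\varepsilon_h^u,\curl\Pi^*\mb w^*+\nabla\Pi^*p^*)_{\mc T_h}-(\varepsilon_h^p,\div\Pi^*\mb u^*)_{\mc T_h}$ together with a collection of face integrals.

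The core step is to rewrite $V$ with the dual equations \eqref{eq:dual_eq}. Using $\mb w^*+\curl\mb u^*=\mb 0$, $-\curl\mb w^*-\nabla p^*=\bs\theta$ and $\div\mb u^*=0$, the three groups in $V$ turn into the source pairing $(\bs\theta,\bs\varepsilon_h^u)_{\mc T_h}$, the volume error $(\Pi^*\mb w^*-\mb w^*,\bs\varepsilon_h^w)_{\mc T_h}$, and extra terms carried by $\Pi^*\mb w^*-\mb w^*$, $\Pi^*\mb u^*-\mb u^*$ and $\Pi^*p^*-p^*$. I would dispose of these last terms by applying \eqref{eq:rq_proj_2}--\eqref{eq:rq_proj_3} for $\Pi^*$ (which kill the volume pieces, testing against $\curl\bs\varepsilon_h^w$ and $\div\bs\varepsilon_h^u$) and, crucially, the weak-commutativity identities \eqref{eq:id_wk_cm_1}--\eqref{eq:id_wk_cm_2} for $\Pi^*$ taken with the $-\tau$ sign and tested against $\bs\varepsilon_h^u\in V_h$ and $\varepsilon_h^p\in Q_h$; this is exactly what manufactures the remainders $\bs\delta_{-\tau_t}^{\Pi^*}$ and $\delta_{-\tau_n}^{\Pi^*}$ appearing on the right of \eqref{eq:duality_id}.

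I expect the main obstacle to be the bookkeeping of the surviving face integrals, which must cancel in groups for the identity to close. The cancellations hinge on three facts: the self-adjointness of the $\curl$--$\curl$ and $\nabla$--$\div$ couplings under integration by parts; the memberships $\mb n\times\bs\varepsilon_h^w\in N(\pp K)$ and $\bs\varepsilon_h^u\cdot\mb n\in M(\pp K)$ from \eqref{eq:hdg_sp_4}--\eqref{eq:hdg_sp_5}, which let me insert or delete $\mr P_N$ and $\mr P_M$ at will; and the single-valuedness of the numerical traces combined with the interior continuity of the exact dual traces $\mb n\times\mb w^*$ and $\mb u^*\cdot\mb n$ and the boundary vanishing of $\widehat{\bs\varepsilon}_h^u,\widehat\varepsilon_h^p$ on $\Gamma$. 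Matching the stabilization pairings coming from the second, third and sixth equations against the expanded $\pm\tau$ remainders, so that the $\tau_t$- and $\tau_n$-weighted face integrals annihilate, is the step most prone to sign slips; once it is organized carefully, every spurious term cancels and \eqref{eq:duality_id} follows.
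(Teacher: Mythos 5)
Your proposal is correct and its first half coincides with the paper's: the paper likewise tests the primal error equations with $\mb r=\Pi^*\mb w^*$, $\mb v=\Pi^*\mb u^*$, $q=\Pi^*p^*$ and the transmission equations with $\mr P_N\mb u^*$, $\mr P_Mp^*$ (extended to $\Gamma$ exactly by the vanishing of the dual traces you invoke), which reconstructs the left-hand side of \eqref{eq:duality_id}. Where you diverge is the second half. The paper does not integrate by parts and substitute the strong dual PDE into a volume form $V$; instead it first records a full set of ``projected dual equations'' \eqref{eq:dpr_eq} --- the exact analogue of \eqref{eq:proj_eq} for $(\mb w^*,\mb u^*,p^*)$, whose derivation is precisely the combination of \eqref{eq:rq_proj_2}--\eqref{eq:rq_proj_3} and the $-\tau$ weak-commutativity identities that you propose to apply on the fly --- then tests those with $(\bs\varepsilon_h^w,\bs\varepsilon_h^u,\varepsilon_h^p,\widehat{\bs\varepsilon}_h^u,\widehat{\varepsilon}_h^p)$ and observes that the left-hand sides of the two tested systems \eqref{eq:err_tested} and \eqref{eq:dl_tested} are term-by-term permutations of one another. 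Subtracting then yields \eqref{eq:duality_id} with no explicit face-integral cancellation at all: the step you flag as ``most prone to sign slips'' is exactly what the permutation device sidesteps, since the adjoint structure of the HDG bilinear form is exploited wholesale rather than face by face. Your route is workable and manufactures the same $\bs\delta_{-\tau_t}^{\Pi^*}$, $\delta_{-\tau_n}^{\Pi^*}$ remainders, but it amounts to inlining the proof of \eqref{eq:dpr_eq} and then re-verifying by hand the cancellations that the paper gets for free; if you carry it out, I would recommend restructuring along the paper's lines, as the symmetric formulation is both shorter and far less error-prone.
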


Let $\bs\theta=\bs\varepsilon_h^u$ and proceed, we can obtain an estimate for $\|\mb u-\mb u_h\|_{\mc T_h}$. We will do this in Section \ref{sec:un_err_ana} when the approximation spaces are specified.

\section{Projections}\label{sec:gal_proj}
In this section, we give a collection of projections which will become the building blocks for constructing projections satisfying Assumption \ref{asmp:proj}. Some of these projections are well known while some are newly devised. 
For those known, we review their constructions and convergence properties. For those new, we prove their optimal convergence under certain shape-regularity conditions of the element. We categorize the projections into two groups: (1) Projections for polyhedral element; (2) Projections for simplex element.

\subsection{Projections for polyhedral element}
In this subsection, we focus on one element $K$, which we assume to be a star-shaped polyhedron (we remark that $K$ is also allowed to be a simplex). 
We define the shape-regularity constant of $K$ as any constant $\gamma>0$ satisfying the following conditions (see \cite{BrSc:2008,DiDr:2017,Ki:2012}):
\begin{itemize}
\item Chunkiness condition. $K$ is star-shaped with respect to a ball with radius $\rho$ and $\frac{h_K}{\rho}\le \gamma$. 
\item Simplex condition. $K$ admits a simplex decomposition such that for any simplex $T$, if $h_T$ is the diameter of $T$ and $\rho_T$ is the inradius, then $\frac{h_T}{\rho_T}\le \gamma$. 
\item Local quasi-uniformity. Let $a^\mr{max}$ and $a^\mr{min}$ be the areas of the largest and smallest face of $K$ respectively, then $\frac{a^\mr{max}}{a^\mr{\min}}\le \gamma$.
\end{itemize}

\newparagraph{$L^2$ projection.} For $k\ge0$, the orthogonal projection (or $L^2$ projection)
\begin{align*}
\Pi_k:L^2(K)^3&\rightarrow\mc P_k(K)^3,\\
\mb u&\mapsto\Pi_k\mb u,
\end{align*}
is defined by solving
\begin{align}\label{eq:eq_L2}
(\Pi_k\mb u-\mb u,\mb v)_K=0\quad\forall \mb v\in\mc P_k(K)^3.
\end{align}
We have (see \cite{DiDr:2017,HoPeSc:2004})
\begin{align}\label{eq:conv_L2}
h_K^{1/2}\|\Pi_k\mb u-\mb u\|_{\pp K}
+\|\Pi_k\mb u-\mb u\|_K\le C h_K^m|\mb u|_{m,K},
\end{align}
where $m\in(1/2,k+1]$ and $C$ depends only on $k$ and the shape-regularity of $K$.

\newparagraph{Curl+ projection.} We denote by $\widetilde{\mc P}_k$ the homogeneous polynomial space of degree $k$ and denote by $\nabla_F$ the surface gradient on face $F$. Define
\begin{align*}
N(\pp K)=\prod_{F\in\mc E_K}\mc P_k(F)^{t}\oplus\nabla_F\widetilde{\mc P}_{k+2}(F),
\end{align*}
and let $\mr P_N: H^{1/2+\epsilon}(K)\rightarrow N(\pp K)$ be the $L^2$ projection to $N(\pp K)$.
For $k\ge 0$, 
the curl+ projection
\begin{align*}
\Pi_k^c:H^{1/2+\epsilon}(K)^3&\rightarrow\mc P_k(K)^3,\\
\mb w&\mapsto\Pi_k^c\mb w,
\end{align*}
is defined by
\begin{subequations}\label{eq:curl+_proj}
\begin{alignat}{5}
\label{eq:curl+_proj_1}
(\Pi_k^c\mb w-\mb w,\mb r)_K&=0 \quad\forall\mb r\in\nabla\times\mc P_k(K)^3\oplus(\nabla\times\mc P_{k+1}(K)^3)^{\perp_k},\\
\label{eq:curl+_proj_2}
(\Pi_k^c\mb w-\mb w,\nabla\times\mb v)_{K}
&=\dualpr{(\mb n\times\mb w)-\mr P_N(\mb n\times\mb w),\mb v}_{\pp K}\\
\nonumber
&\quad\qquad\forall\mb v\in(\mc P_k(K)^3\oplus\nabla\widetilde{\mc P}_{k+2}(K))^{\perp_{k+1}},
\end{alignat}
\end{subequations}
where $\perp_m$ means taking orthogonal complement in $\mc P_m(K)^3$. 

By \eqref{eq:curl+_proj_1} and \eqref{eq:curl+_proj_2}, we obtain
\begin{align}\label{eq:curl+_proj_cor}
(\Pi_k^c\mb w-\mb w,\nabla\times\mb v)_{K}
&=\dualpr{(\mb n\times\mb w)-\mr P_N(\mb n\times\mb w),\mb v}_{\pp K}
\quad\forall\mb v\in\mc P_{k+1}(K)^3.
\end{align}
This can be easily proved by decompose $\mb v=\mb v_1+\mb v_2$, where $\mb v_1\in\mc P_k(K)^3\oplus\nabla\widetilde{\mc P}_{k+2}(K)$ and $\mb v_2\in (\mc P_k(K)^3\oplus\nabla\widetilde{\mc P}_{k+2}(K))^{\perp_{k+1}}$.
In addition, note that if $k\ge1$, then
\begin{align}\label{eq:curl+_0order}
(\Pi_k^c\mb w-\mb w,\mc P_0(K)^3)_K=0,
\end{align}
which can be derived easily from \eqref{eq:curl+_proj_1}.
\begin{theorem}\label{thm:curl+_conv}
The projection $\Pi_k^c$ is well defined and 
\begin{align}\label{eq:curl+_conv}
h_K^{1/2}\|\Pi_k^c\mb w-\mb w\|_{\pp K}+
\|\Pi_k^c\mb w-\mb w\|_K\le C h_K^m|\mb w|_{m,K},
\end{align}
where $m\in(1/2,k+1]$ and $C$ depends only on $k$ and the shape-regularity of $K$.
\end{theorem}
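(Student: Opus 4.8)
The plan is to prove Theorem~\ref{thm:curl+_conv} in two stages: first establish that $\Pi_k^c$ is well defined (i.e.\ the linear system \eqref{eq:curl+_proj_1}--\eqref{eq:curl+_proj_2} has a unique solution), and then derive the approximation estimate \eqref{eq:curl+_conv}. For well-posedness, since $\Pi_k^c$ maps into the finite-dimensional space $\mc P_k(K)^3$, it suffices to count degrees of freedom and then verify injectivity on a square system. The target space $\mc P_k(K)^3$ admits the orthogonal splitting into $\nabla\times\mc P_k(K)^3\oplus(\nabla\times\mc P_{k+1}(K)^3)^{\perp_k}$ used in \eqref{eq:curl+_proj_1}, so the two conditions prescribe $\Pi_k^c\mb w$ against a total collection of test functions that I expect to match $\dim\mc P_k(K)^3$. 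The crux is to show the two sets of test functions in \eqref{eq:curl+_proj_1} and \eqref{eq:curl+_proj_2}, namely the range $\nabla\times\mc P_k(K)^3\oplus(\nabla\times\mc P_{k+1}(K)^3)^{\perp_k}$ together with $\nabla\times$ applied to $(\mc P_k(K)^3\oplus\nabla\widetilde{\mc P}_{k+2}(K))^{\perp_{k+1}}$, together span $\mc P_k(K)^3$ and that the dimension count is exact. I would verify injectivity by setting $\mb w=0$ (so the boundary term in \eqref{eq:curl+_proj_2} vanishes) and showing the resulting homogeneous conditions force $\Pi_k^c\mb w=0$; here the key is that $\nabla\times\mb v$ for $\mb v$ ranging over the orthogonal-complement space, combined with the first condition, exhausts the range, using the fact that $\nabla\times:\mc P_{k+1}(K)^3\to\mc P_k(K)^3$ and the decomposition of $\mc P_{k+1}(K)^3$ appearing in \eqref{eq:curl+_proj_cor}.

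For the approximation estimate, I would use the standard Bramble--Hilbert / scaling argument adapted to star-shaped polyhedra. The idea is that $\Pi_k^c$ reproduces all of $\mc P_k(K)^3$: if $\mb w\in\mc P_k(K)^3$, then the $L^2$ projection $\mr P_N(\mb n\times\mb w)$ need not equal $\mb n\times\mb w$, so one must check that polynomial invariance still holds despite the nonzero boundary term. In fact, when $\mb w$ is a polynomial of degree $\le k$, I would argue that $\mb w$ itself satisfies \eqref{eq:curl+_proj_1}--\eqref{eq:curl+_proj_2} with $\Pi_k^c\mb w$ replaced by $\mb w$ (the left-hand sides vanish), provided the boundary pairing $\dualpr{(\mb n\times\mb w)-\mr P_N(\mb n\times\mb w),\mb v}_{\pp K}$ also vanishes for the relevant test $\mb v$; this is where the enrichment of $N(\pp K)$ by $\nabla_F\widetilde{\mc P}_{k+2}(F)$ is designed to make $\mb n\times\mb w$ lie in $N(\pp K)$ for $\mb w\in\mc P_k(K)^3$, so that $\mr P_N(\mb n\times\mb w)=\mb n\times\mb w$ and the boundary term is zero. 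Granting polynomial reproduction, the estimate follows by writing $\Pi_k^c\mb w-\mb w=\Pi_k^c(\mb w-\mb q)-(\mb w-\mb q)$ for an arbitrary $\mb q\in\mc P_k(K)^3$, bounding $\|\Pi_k^c\mb z\|_K$ by a uniform (shape-regularity-dependent) constant times $\|\mb z\|_{H^{1/2+\epsilon}(K)}$-type norms of $\mb z=\mb w-\mb q$, and then optimizing over $\mb q$ via the Bramble--Hilbert lemma on the reference configuration.

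The main obstacle I anticipate is the stability bound $\|\Pi_k^c\mb z\|_K\lesssim$ (data norms of $\mb z$) with a constant depending only on $k$ and the shape-regularity constant $\gamma$, uniformly over the polyhedral shape. Because $\Pi_k^c$ is defined through a boundary pairing on $\pp K$ involving $\mb n\times\mb z$ and the $L^2(\pp K)$ projection $\mr P_N$, I would need a trace-type control of this boundary term by interior norms, and this is precisely where the star-shapedness and the chunkiness/simplex/local-quasiuniformity conditions enter. The cleanest route is to transfer to a reference element via an affine-equivalent simplex decomposition, prove a fixed bound there, and track the scaling of each term in $h_K$; the half-power $h_K^{1/2}$ on the boundary norm in \eqref{eq:curl+_conv} is exactly the scaling of the trace inequality $\|\cdot\|_{\pp K}\lesssim h_K^{-1/2}\|\cdot\|_K + h_K^{1/2}|\cdot|_{1,K}$. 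I would also invoke the $L^2$-projection estimate \eqref{eq:conv_L2} and the trace estimate for $\mr P_N$ to control the boundary defect $(\mb n\times\mb z)-\mr P_N(\mb n\times\mb z)$. Assembling these with the polynomial reproduction and Bramble--Hilbert yields \eqref{eq:curl+_conv} for $m\in(1/2,k+1]$.
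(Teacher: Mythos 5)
Your two-stage plan is viable and its skeleton matches the paper's: the well-posedness part (dimension count plus injectivity of the homogeneous square system) is essentially identical to the paper's argument. For the estimate, however, you take a genuinely different packaging: polynomial reproduction plus a stability bound plus Bramble--Hilbert, whereas the paper compares $\Pi_k^c\mb w$ directly with the $L^2$ projection $\Pi_k\mb w$. The paper's choice is not cosmetic: because $\Pi_k\mb w$ satisfies $(\Pi_k\mb w-\mb w,\mb r)_K=0$ for \emph{all} $\mb r\in\mc P_k(K)^3$, the difference $\bs\varepsilon_k^w=\Pi_k^c\mb w-\Pi_k\mb w$ solves the defining system with zero interior data and only the boundary defect $\mb n\times\mb w-\mr P_N(\mb n\times\mb w)$ on the right-hand side, so no full stability bound is ever needed -- one only has to estimate $\|\bs\varepsilon_k^w\|_K$ against that single boundary term. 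Your route with an arbitrary $\mb q\in\mc P_k(K)^3$ requires the full operator-norm bound $\|\Pi_k^c\mb z\|_K\lesssim\|\mb z\|_K+h_K^{1/2}\|\mb z\|_{\pp K}$, which is strictly more than what the theorem needs; choosing $\mb q=\Pi_k\mb w$ from the start collapses your argument into the paper's.

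The genuine gap is that the one hard inequality -- which both routes need -- is left unidentified. Writing $\bs\varepsilon_k^w=\nabla\times\mb v_1+\bs\varepsilon_k^2$ and testing, everything reduces to the gauge/Poincar\'e-type bound
\begin{align*}
\inf_{p\in\mc P_{k+2}(K)}\|\mb v_1+\nabla p\|_K\lesssim h_K\|\nabla\times\mb v_1\|_K ,
\end{align*}
with a constant depending only on $k$ and the shape-regularity of the star-shaped polyhedron $K$; combined with the discrete trace inequality this produces exactly the $h_K^{1/2}$ gain in \eqref{eq:curl+_conv}. Your "transfer to a reference element and track scalings" remark does not by itself deliver this (there is no single reference polyhedron, and the quotient-by-gradients structure must be preserved under the map), so you need to state and justify this inequality explicitly. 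Two smaller corrections: the role you assign to the enrichment $\nabla_F\widetilde{\mc P}_{k+2}(F)$ is misplaced -- for $\mb w\in\mc P_k(K)^3$ one already has $\mb n\times\mb w\in\prod_F\mc P_k(F)^t\subset N(\pp K)$, so polynomial reproduction needs no enrichment; the enrichment is there so that $(\mc P_k(K)^3\oplus\nabla\widetilde{\mc P}_{k+2}(K))^t\subset N(\pp K)$, which is what lets one extend \eqref{eq:curl+_proj_2} to all test functions $\mb v\in\mc P_{k+1}(K)^3$ (cf.\ \eqref{eq:curl+_proj_cor}) and hence makes the gauge choice $\mb v=\mb v_1+\nabla p$ admissible in the key estimate. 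Also, $\nabla\times\mc P_k(K)^3\oplus(\nabla\times\mc P_{k+1}(K)^3)^{\perp_k}$ is a proper subspace of $\mc P_k(K)^3$, not an orthogonal splitting of it; the missing directions are exactly those supplied by \eqref{eq:curl+_proj_2}.
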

\begin{proof}
See appendix.
\end{proof}

This projection will be the key in our analysis of the two HDG variants using Lehrenfeld-Sch\"{o}berl type stabilization (LS stabilization) function \cite{LeSc:2010, ChQiShSo:2017} (variants $\mathfrak{B+}$ and $\mathfrak{H+}$). 

\subsection{Projections for simplex element}
In this subsection, we focus on one simplex element $K$ in $\mbb R^3$. 

\newparagraph{HDG projection.} Let $\mc R_k(\pp K):=\prod_{F\in\mc E_K}\mc P_k(F)$ to shorten notation. For $k\ge0$, the HDG projection (see \cite{CoGoSa:2010})
\begin{align*}
\Pi_{k,\tau_K}^H:H^1(K)^3\times H^1(K)&\rightarrow\mc P_k(K)^3\times\mc P_k(K),\\
(\mb u,p)&\mapsto (\Pi_{k,\tau_K}^H\mb u,\Pi_{k,\tau_K}^H p),
\end{align*}
is defined by solving
\begin{subequations}\label{eq:def_hdgpr}
\begin{align}
\label{eq:def_hdgpr_1}
(\Pi_{k,\tau_K}^H\mb u-\mb u,\mb r)_K&=0\quad\forall\mb r\in \mc P_{k-1}(K)^3,\\
\label{eq:def_hdgpr_2}
(\Pi_{k,\tau_K}^H p-p,v)_K&=0\quad\forall v\in \mc P_{k-1}(K),\\
\label{eq:def_hdgpr_3}
\dualpr{(\Pi_{k,\tau_K}^H\mb u-\mb u)\cdot\mb n+\tau_K(\Pi_{k,\tau_K}^H p-p),\mu}_{\pp K}&=0\quad
\forall\mu\in\mc R_k(\pp K),
\end{align}
\end{subequations}
where $\tau_K\in\prod_{F\in\mc E_K}\mc P_0(F)$ and it satisfies either $0\neq\tau_K\ge0$
or $0\neq\tau_K\le 0$. 
\begin{theorem}[\cite{CoGoSa:2010}]\label{thm:hdg}
For the projection $\Pi_{k,\tau_K}^H$, we have
\begin{subequations}\label{eq:conv_hdg}
\begin{align}
\label{eq:conv_hdg_1}
h_K^{1/2}\|\mb u-\Pi_{k,\tau_K}^H\mb u\|_{\pp K}
+\|\mb u-\Pi_{k,\tau_K}^H\mb u\|_K&\le C
(h_K^s|\mb u|_{s,K}+\tau_K^\mr{sec}h_K^t|p|_{t,K}),\\
\label{eq:conv_hdg_2}
h_K^{1/2}\|p-\Pi_{k,\tau_K}^H p\|_{\pp K}
+\|p-\Pi_{k,\tau_K}^H p\|_K &\le C (h_K^t|p|_{t,K}+\frac{h_K^s}{\tau_K^\mr{max}}|\div\mb u|_{s-1,K}),
\end{align}
\end{subequations}
with $s,t\in[1,k+1]$,
where $\tau_K^\mr{max}$ and $\tau_K^\mr{sec}$ are the largest and the second largest values of $|\tau_K|$ on the faces of $K$ respectively.
\end{theorem}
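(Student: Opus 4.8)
The plan is to follow the projection-based strategy and compare $\Pi_{k,\tau_K}^H$ against the $L^2$ projection $\Pi_k$ onto $\mc P_k(K)^3\times\mc P_k(K)$, reducing everything to a finite-dimensional stability estimate for the correction and restoring the $h_K$- and $\tau_K$-scaling afterwards. Writing
\[
\bs\Delta^u:=\Pi_{k,\tau_K}^H\mb u-\Pi_k\mb u,\qquad \Delta^p:=\Pi_{k,\tau_K}^Hp-\Pi_kp,
\]
and subtracting the $L^2$-projection relations from \eqref{eq:def_hdgpr_1}--\eqref{eq:def_hdgpr_2} shows $\bs\Delta^u\perp\mc P_{k-1}(K)^3$ and $\Delta^p\perp\mc P_{k-1}(K)$, so the pair lies in the space $X_k$ of degree-$k$ modes orthogonal to $\mc P_{k-1}$. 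Inserting the splitting into \eqref{eq:def_hdgpr_3} yields, for all $\mu\in\mc R_k(\pp K)$,
\[
\dualpr{\bs\Delta^u\cdot\mb n+\tau_K\Delta^p,\mu}_{\pp K}=-\dualpr{(\Pi_k\mb u-\mb u)\cdot\mb n+\tau_K(\Pi_kp-p),\mu}_{\pp K},
\]
whose right-hand side is controlled through \eqref{eq:conv_L2}. Since $\bs\Delta^u\cdot\mb n+\tau_K\Delta^p$ is facewise in $\mc R_k(\pp K)$, testing with $\mu=\bs\Delta^u\cdot\mb n+\tau_K\Delta^p$ bounds this combination in $L^2(\pp K)$ by the data; the task is then to convert that boundary control into separate interior bounds on $\bs\Delta^u$ and $\Delta^p$ with the correct constants.

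The heart of the argument is the well-posedness of this local system, i.e.\ the injectivity of $(\mb r,v)\mapsto(\mb r\cdot\mb n+\tau_K v)|_{\pp K}$ on $X_k$ (the two spaces having equal dimension, injectivity gives bijectivity). Suppose $\mb r\cdot\mb n+\tau_K v=0$ on $\pp K$ with $\mb r\perp\mc P_{k-1}(K)^3$ and $v\perp\mc P_{k-1}(K)$. Since $\nabla v\in\mc P_{k-1}(K)^3$ and $\div\mb r\in\mc P_{k-1}(K)$, integrating by parts gives $\dualpr{\mb r\cdot\mb n,v}_{\pp K}=(\mb r,\nabla v)_K+(\div\mb r,v)_K=0$, and combining with $\mb r\cdot\mb n=-\tau_K v$ yields $\sum_{F\in\mc E_K}\tau_K|_F\|v\|_F^2=0$. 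Here the sign-definiteness $0\ne\tau_K\ge0$ (or $\le0$) is used: it forces $v=0$ on every face where $\tau_K\ne0$. A polynomial $v\in\mc P_k(K)$ vanishing on a face $F=\{\lambda_F=0\}$ factors as $v=\lambda_Fw$ with $w\in\mc P_{k-1}(K)$, and $(v,w)_K=(\lambda_Fw,w)_K=0$ with $\lambda_F>0$ in the interior forces $w=0$, hence $v\equiv0$. Then $\mb r\cdot\mb n=0$ on all of $\pp K$; moreover, testing the orthogonality against $\nabla\phi$ ($\phi\in\mc P_k(K)$) and integrating by parts gives $\div\mb r=0$, and a divergence-free field in $\mc P_k(K)^3$ with vanishing normal trace that is $L^2$-orthogonal to $\mc P_{k-1}(K)^3$ vanishes on the simplex by a barycentric divisibility argument (cf.\ \cite{CoGoSa:2010}). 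This establishes that $\Pi_{k,\tau_K}^H$ is well defined.

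With well-posedness in hand, a scaling to the reference simplex turns the boundary estimate into $\|\bs\Delta^u\|_K+\|\Delta^p\|_K\le C\|\mathrm{data}\|_{\pp K}$ with constants depending only on $k$ and on the finitely many face values of $\tau_K$; feeding in \eqref{eq:conv_L2} and the standard Bramble--Hilbert bounds for the data, together with $\|\mb u-\Pi_{k,\tau_K}^H\mb u\|_K\le\|\mb u-\Pi_k\mb u\|_K+\|\bs\Delta^u\|_K$ and its boundary analogue, would already give estimates of the claimed form. The delicate point---and the main obstacle---is obtaining the sharp dependence on $\tau_K^{\max}$ and $\tau_K^{\sec}$ rather than a crude constant. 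For the scalar error one uses the facewise relation $\Delta^p=(\mathrm{data}-\bs\Delta^u\cdot\mb n)/\tau_K$ on the face carrying the largest stabilization, which produces the $1/\tau_K^{\max}$ factor, while the replacement of $|\mb u|_{s,K}$ by $|\div\mb u|_{s-1,K}$ in \eqref{eq:conv_hdg_2} comes from the commuting relation
\[
(\div(\Pi_{k,\tau_K}^H\mb u-\mb u),\phi)_K=\dualpr{(\Pi_{k,\tau_K}^H\mb u-\mb u)\cdot\mb n,\phi}_{\pp K}\qquad\forall\phi\in\mc P_k(K),
\]
obtained by testing \eqref{eq:def_hdgpr_1} with $\nabla\phi$ and integrating by parts. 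For the vector error, one ``spends'' the single face of largest $\tau_K$ and estimates the contribution of the remaining faces, so that only the second-largest value $\tau_K^{\sec}$ multiplies the $p$-term in \eqref{eq:conv_hdg_1}. Making these bounds $\tau_K$-uniform requires splitting the faces according to the size of $\tau_K$ and a careful reference-element analysis, and this is where I expect the real work to lie; the well-posedness and the $L^2$-approximation inputs above are comparatively routine.
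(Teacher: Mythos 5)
This theorem is not proved in the paper at all: it is quoted verbatim from \cite{CoGoSa:2010} (note the citation in the theorem header and the absence of any proof or ``see appendix'' pointer), so there is no internal proof to compare against. Your reconstruction does follow the route of that reference: compare $\Pi_{k,\tau_K}^H$ with the $L^2$ projection, observe that the correction $(\bs\Delta^u,\Delta^p)$ lives in the degree-$k$ modes orthogonal to $\mc P_{k-1}$, prove unisolvence of the boundary system, and rescale. Your unisolvence argument is correct and essentially complete: the integration by parts killing $\dualpr{\mb r\cdot\mb n,v}_{\pp K}$, the sign-definiteness of $\tau_K$ forcing $v=0$ on a face, the divisibility $v=\lambda_F w$ with $(\lambda_F w,w)_K=0$, and the final reduction of $\mb r$ to BDM unisolvence (orthogonality to $\mc P_{k-1}(K)^3$ implies orthogonality to $\mc N_{k-2}(K)$) are all sound, and the dimension count $4\binom{k+2}{2}=\dim\mc R_k(\pp K)$ checks out.

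The gap is that the actual content of the theorem --- the precise factors $\tau_K^{\mr{sec}}$ in \eqref{eq:conv_hdg_1} and $1/\tau_K^{\mr{max}}$ together with the seminorm $|\div\mb u|_{s-1,K}$ in \eqref{eq:conv_hdg_2} --- is only gestured at, and you say yourself that this is ``where the real work lies.'' A generic finite-dimensional stability/scaling argument, as you note, yields constants depending on the face values of $\tau_K$, and that is not enough: the map $(\mb r,v)\mapsto(\mb r\cdot\mb n+\tau_K v)|_{\pp K}$ degenerates both as $\tau_K\to\infty$ and as $\tau_K\to 0$ on some faces, so a naive norm equivalence on the reference element produces constants that blow up exactly in the regimes the theorem is designed to cover (and which the paper later exploits, e.g.\ sending one face value of $\tau_n$ to $10^5/h_K^2$ in Test B while requiring only $(\tau_n|_{\pp K})^{\mr{sec}}\lesssim h_K$). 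Turning ``spend the face of largest $\tau_K$'' and ``divide by $\tau_K$ on that face'' into $\tau_K$-uniform bounds requires the explicit dual/orthogonality construction of \cite{CoGoSa:2010}, including the commuting identity for $\div$ that you correctly state but do not integrate into a closed estimate. As written, the proposal establishes well-posedness of the projection and a qualitative error bound, but not the two inequalities \eqref{eq:conv_hdg_1}--\eqref{eq:conv_hdg_2} as stated.
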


\newparagraph{BDM-H projection.} For $k\ge1$, define
\begin{align*}
\Pi_{k,\tau_K}^B:H^1(K)^3&\rightarrow \mc P_k(K)^3,\\
\mb u&\mapsto\Pi_{k,\tau_K}^B\mb u,
\end{align*}
by solving
\begin{subequations}\label{eq:def_bdmh}
\begin{align}
\label{eq:def_bdmh_1}
(\Pi_{k,\tau_K}^B\mb u-\mb u,\mb r)_K &=0\quad\forall\mb r\in \mc N_{k-2}(K),\\
\label{eq:def_bdmh_2}
\dualpr{(\Pi_{k,\tau_K}^B\mb u-\mb u)\cdot\mb n+\tau_K(\Pi_{k-1}p-p),\mu}_{\pp K}&=0\quad\forall\mu\in\mc R_k(\pp K),
\end{align}
\end{subequations}
where $\tau_K\in\prod_{F\in\mc E_K}\mc P_0(F)$ satisfying either $\tau_K\ge0$ or $\tau_K\le0$, and $\mc N_{k-2}(K)$ is the N\'{e}d\'{e}lec space $\mc N_{k-2}(K):=\mc P_{k-2}(K)^d\oplus\{\mb u\in\widetilde{\mc P}_{k-1}(K)^d:\mb u\cdot\mb m=0\}$ with $\mb m=(x,y,z)$.
\begin{prop}\label{prop:bdmh_proj}
For the projection $\Pi_{k,\tau_K}^B$, we have
\begin{align}\label{eq:conv_bdmh}
h_K^{1/2}\|\Pi_{k,\tau_K}^B\mb u-\mb u\|_{\pp K}
+\|\Pi_{k,\tau_K}^B\mb u-\mb u\|_K
\le C\left(h_K^s|\mb u|_{s,K} + \tau_K^\mr{max}h_K^t|p|_{t,K}\right),
\end{align}
where $s\in[1,k+1]$, $t\in[1,k]$, $\tau_K^\mr{max}$ is the largest value of $|\tau_K|$, and $C$ depends only on $k$ and the shape-regularity of $K$. 
\end{prop}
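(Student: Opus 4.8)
The plan is to mirror the standard analysis of the BDM projection (and its HDG variant), adapting the argument to account for the stabilization term involving $\tau_K$ and the pressure $p$ that couples into the definition through \eqref{eq:def_bdmh_2}. First I would reduce to a reference element by a standard scaling (Piola-type) argument: since both the N\'ed\'elec condition \eqref{eq:def_bdmh_1} and the face condition \eqref{eq:def_bdmh_2} are affine-invariant in the appropriate sense, I would transform $K$ to a fixed reference simplex $\widehat K$, prove the estimate there with a constant depending only on $k$, and scale back using the shape-regularity constant $\gamma$ to recover the powers of $h_K$. The stabilization term needs care under scaling because $\tau_K$ is a face constant; tracking how $\tau_K$, the surface measure, and the volume measure scale is what produces the factor $\tau_K^{\mathrm{max}} h_K^t$ rather than a different power.

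The core estimate on the reference element I would obtain by a Bramble--Hilbert / compactness argument. The key structural fact is that $\Pi_{k,\tau_K}^B$ is well defined and is a projection onto $\mc P_k(K)^3$, i.e. it fixes polynomials of degree $\le k$ (this requires checking that \eqref{eq:def_bdmh_1}--\eqref{eq:def_bdmh_2} uniquely determine the image, using a dimension count matching the interior moments against $\mc N_{k-2}(K)$ with the face degrees of freedom against $\mc R_k(\pp K)$, exactly as in the BDM solvability proof referenced at the end of the excerpt after Proposition \ref{prop:unique_solv}). Once invariance on $\mc P_k(K)^3$ is established, I would write $\mb u - \Pi_{k,\tau_K}^B\mb u = (\mb u - \Pi_k\mb u) - \Pi_{k,\tau_K}^B(\mb u - \Pi_k\mb u)$ using the $L^2$ projection $\Pi_k$, bound the first term by \eqref{eq:conv_L2}, and bound the operator $\Pi_{k,\tau_K}^B$ applied to the residual by its definition, feeding the face-term contribution of $p$ through \eqref{eq:def_bdmh_2}.

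The main obstacle, and where the $\tau_K^{\mathrm{max}} h_K^t|p|_{t,K}$ term and the restriction $t\in[1,k]$ arise, is controlling the pressure contribution in the face equation \eqref{eq:def_bdmh_2}. Here the quantity $\tau_K(\Pi_{k-1}p - p)$ acts as a source on $\pp K$, so I would isolate the part of $\Pi_{k,\tau_K}^B\mb u$ forced by this source, test against a suitable $\mu\in\mc R_k(\pp K)$, and use a trace/inverse inequality together with \eqref{eq:conv_L2} applied to $\Pi_{k-1}p - p$; since $\Pi_{k-1}$ only reproduces polynomials up to degree $k-1$, the pressure can contribute at most $h_K^t|p|_{t,K}$ with $t\le k$, which explains the stated range. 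I expect the delicate point to be verifying that the homogeneous part of the operator (the part not driven by $\mb u$ or $p$) is bounded uniformly on $\widehat K$, which again reduces to the unique solvability and a norm-equivalence argument on the finite-dimensional image space; once that stability is in hand, the remaining estimates are the routine scaling and Bramble--Hilbert steps.
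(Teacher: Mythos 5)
Your outline is workable but follows a genuinely different route from the paper. You propose the classical interpolation-theory pipeline: map to a reference simplex, establish unisolvency and uniform stability of the operator there, invoke Bramble--Hilbert together with polynomial reproduction, and scale back. The paper instead takes a shortcut that avoids re-deriving any approximation theory: it sets $\bs\varepsilon_k^u:=\Pi_{k,\tau_K}^B\mb u-\Pi_k^\mr{BDM}\mb u$, observes from \eqref{eq:def_bdmh_1}--\eqref{eq:def_bdmh_2} that $\bs\varepsilon_k^u$ has vanishing $\mc N_{k-2}(K)$ moments and satisfies $\dualpr{\bs\varepsilon_k^u\cdot\mb n+\tau_K(\Pi_{k-1}p-p),\mu}_{\pp K}=0$, tests with $\mu=\bs\varepsilon_k^u\cdot\mb n$ to get $\|\bs\varepsilon_k^u\cdot\mb n\|_{\pp K}\le\tau_K^\mr{max}\|\Pi_{k-1}p-p\|_{\pp K}$, and then recognizes $\bs\varepsilon_k^u$ as the BDM lifting of its own normal trace, so that $\|\bs\varepsilon_k^u\|_K\lesssim h_K^{1/2}\|\bs\varepsilon_k^u\cdot\mb n\|_{\pp K}$. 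The known convergence of $\Pi_k^\mr{BDM}$ and of $\Pi_{k-1}$ then finish the proof in a few lines. What the paper's route buys is brevity and the clean separation of the two terms on the right of \eqref{eq:conv_bdmh}: the $\mb u$-term comes entirely from the classical BDM estimate and the $p$-term entirely from the lifting. Your route buys nothing extra here, since the stability and scaling work you plan to do is precisely what is already packaged inside the classical BDM estimate you would in any case cite.

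One step in your write-up needs repair. The identity $\mb u-\Pi_{k,\tau_K}^B\mb u=(\mb u-\Pi_k\mb u)-\Pi_{k,\tau_K}^B(\mb u-\Pi_k\mb u)$ presumes that $\Pi_{k,\tau_K}^B$ reproduces polynomials of degree $\le k$, but it does not: the face condition \eqref{eq:def_bdmh_2} carries the source $\tau_K(\Pi_{k-1}p-p)$, so $\Pi_{k,\tau_K}^B\mb q\ne\mb q$ for polynomial $\mb q$ whenever that source is nonzero. Polynomial reproduction holds only for the $p$-free part of the operator (which is exactly the classical BDM projection). You do say later that you will isolate the part forced by the pressure source, and once the operator is split as $\Pi_{k,\tau_K}^B[\mb u,p]=\Pi_{k,\tau_K}^B[\mb u,0]+\Pi_{k,\tau_K}^B[0,p]$ the Bramble--Hilbert argument applies to the first piece and your trace/lifting argument to the second; but that splitting must come \emph{before} the quasi-interpolation identity, not after, or the identity is simply false.
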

\begin{proof}
See appendix.
\end{proof}

\section{Unified error analysis}\label{sec:un_err_ana}
In this section, we specify those approximations spaces and stabilization functions in the general setting proposed in Section \ref{sec:gen_setting}. Depending on the choices of the approximations spaces and the types of meshes, we construct different projections. 
All these projections satisfy Assumption \ref{asmp:proj} and therefore we can easily obtain the error estimates of $\mb w_h$ and $\mb u_h$ by using Proposition \ref{prop:ene_id} and Proposition \ref{prop:duality_id}.

\subsection{Estimates for solution with high-regularity}\label{sec:sys_err_ana}
In this subsection, we assume $r_1=1$ and $r_2=2$ in Assumption \ref{asmp:ell_reg}.
We remark that this holds if $\Omega$ is assumed to be convex additionally (we have assumed that $\Omega$ is a simply connected polyhedral domain with connected Lipschitz boundary). Its proof can be obtained by using \cite[Theorem 3.5]{GiRa:1986} and then the identity $\nabla\times\nabla\times\mb u=\nabla(\nabla\cdot\mb u)-\Delta\mb u$ to transform the original formulation \eqref{eq:dual_eq} to a Poisson's equation. We will consider four variants (see Table \ref{tb:var_sumry} for an overview) and prove that all the variants are optimal in Theorem \ref{thm:est_variants}. 

We first introduce some notation. Let $\mc T_h^s$ be the collection of all simplex elements in $\mc T_h$ and let $\mc T_h^p=\mc T_h\backslash\mc T_h^s$ be those non-simplex elements. 
We denote by $\pp\mc T_h^*:=\cup_{K\in\mc T_h^*}\{\pp K\}$ with $*\in\{s,p\}$ as the collections of the boundaries of the simplex and the non-simplex elements respectively.

\newparagraph{Variant $\mathfrak{B}$:
$W\times V\times Q\times N\times M=
\mc P_{k}^3\times\mc P_{k+1}^3\times\mc P_{k}\times\mc P_{k+1}^t\times\mc P_{k+1}$.}
To the best of our knowledge, this variant has not been considered before. We require $k\ge0$.

Let $c_1,c_2>0$ be two fixed constants.
For each $K\in\mc T_h^s$, we choose the stabilization functions such that $\tau_n\big|_{\pp K}\le c_1h_K$ and $c_1h_K^{-1}\le \tau_t\big|_{\pp K}\le c_2h_K^{-1}$. We choose the projection
\begin{subequations}\label{eq:vb_s}
\begin{align}
\label{eq:vb_s_1}
\Pi_K(\mb w,\mb u,p)&:=(\Pi_{k}\mb w,\Pi_{k+1,\tau_n}^B\mb u,\Pi_{k} p),\\
\label{eq:vb_s_2}
\Pi_K^*(\mb w^*,\mb u^*,p^*)&:=(\Pi_{k}\mb w^*,\Pi_{k+1,-\tau_n}^B\mb u^*,\Pi_{k} p^*).
\end{align}
\end{subequations}
For each $K\in\mc T_h^p$, we choose the stabilization functions such that $c_1h_K\le \tau_n\big|_{\pp K}\le c_2h_K$ and $c_1h_K^{-1}\le \tau_t\big|_{\pp K}\le c_2h_K^{-1}$. We choose the projection
\begin{subequations}\label{eq:vb_ns}
\begin{align}
\label{eq:vb_ns_1}
\Pi_K(\mb w,\mb u,p)&:=(\Pi_{k}\mb w,\Pi_{k+1}\mb u,\Pi_{k} p),\\
\label{eq:vb_ns_2}
\Pi_K^*(\mb w^*,\mb u^*,p^*)&:=(\Pi_{k}\mb w^*,\Pi_{k+1}\mb u^*,\Pi_{k} p^*).
\end{align}
\end{subequations}
It is easy to verify that the projections $\Pi_K$ and $\Pi_K^*$ satisfy Assumption \ref{asmp:proj} for all $K\in\mc T_h$ by using \eqref{eq:def_bdmh_1} and \eqref{eq:eq_L2}.

\newparagraph{Variant $\mathfrak{H}$: 
$W\times V\times Q\times N\times M=
\mc P_{k}^3\times\mc P_{k+1}^3\times\mc P_{k+1}\times\mc P_{k+1}^t\times\mc P_{k+1}$.}
This variant has been considered in \cite{ChCuXu:2019}, where the scheme is shown to be optimal if all $K\in\mc T_h$ are simplex and the stabilization functions satisfy $\tau_t\big|_{\pp K}\approx h_K^{-1}$ and $\tau_n\big|_{\pp K}\approx h_K$. We here prove that the scheme is actually optimal for polyhedral elements. For simplex elements, we show that a weaker condition on the stabilization function $(\tau_n\big|_{\pp K})^\mr{sec}\lesssim h_K$ can provide optimal convergence, where $(\cdot\big|_{\pp K})^\mr{sec}$ represents the second largest value of $|\tau_n|$ on the faces of $K$. We require $k\ge0$.

For each $K\in\mc T_h^s$, we choose the stabilization functions such that $\tau_n\neq0$, $(\tau_n\big|_{\pp K})^\mr{sec}\le c_1h_K$ and $c_1h_K^{-1}\le \tau_t\big|_{\pp K}\le c_2h_K^{-1}$. We choose the projection
\begin{subequations}\label{eq:vh_s}
\begin{align}
\label{eq:vh_s_1}
\Pi_K(\mb w,\mb u,p)&:=(\Pi_{k}\mb w,\Pi_{k+1,\tau_n}^H\mb u,\Pi_{k+1,\tau_n}^H p),\\
\label{eq:vh_s_2}
\Pi_K^*(\mb w^*,\mb u^*,p^*)&:=(\Pi_{k}\mb w^*,\Pi_{k+1,-\tau_n}^H\mb u^*,\Pi_{k+1,-\tau_n}^H p^*).
\end{align}
\end{subequations}
For each $K\in\mc T_h^p$, we choose the stabilization functions such that $c_1h_K\le \tau_n\big|_{\pp K}\le c_2h_K$ and $c_1h_K^{-1}\le \tau_t\big|_{\pp K}\le c_2h_K^{-1}$. We choose the projection
\begin{subequations}\label{eq:vh_ns}
\begin{align}
\label{eq:vh_ns_1}
\Pi_K(\mb w,\mb u,p)&:=(\Pi_{k}\mb w,\Pi_{k+1}\mb u,\Pi_{k+1} p),\\
\label{eq:vh_ns_2}
\Pi_K^*(\mb w^*,\mb u^*,p^*)&:=(\Pi_{k}\mb w^*,\Pi_{k+1}\mb u^*,\Pi_{k+1} p^*).
\end{align}
\end{subequations}
It is easy to verify that the projections $\Pi_K$ and $\Pi_K^*$ satisfy Assumption \ref{asmp:proj} for all $K\in\mc T_h$ by using \eqref{eq:eq_L2}, \eqref{eq:def_hdgpr_1} and \eqref{eq:def_hdgpr_2}.

\newparagraph{Variant $\mathfrak{B+}$:
$W\times V\times Q\times N\times M=
\mc P_k^3\times\mc P_{k+1}^3\times\mc P_{k}\times\mc P_k^t\oplus\nabla_F\widetilde{\mc P}_{k+2}\times\mc P_{k+1}$.}
This variant has been analyzed in \cite{ChQiShSo:2017}. Compared to \cite{ChQiShSo:2017}, we give estimates in a slightly more general setting where the stabilization functions are allowed to be chosen more freely depending on the types of elements (simplex or not simplex). We require $k\ge1$.

For each $K\in\mc T_h^s$, we choose the stabilization functions such that $\tau_n\big|_{\pp K}\le c_1h_K$ and $c_1h_K^{-1}\le \tau_t\big|_{\pp K}\le c_2h_K^{-1}$. We choose the projection
\begin{subequations}\label{eq:vb+_s}
\begin{align}
\label{eq:vb+_s_1}
\Pi_K(\mb w,\mb u,p)&:=(\Pi_k^c\mb w,\Pi_{k+1,\tau_n}^B\mb u,\Pi_{k} p),\\
\label{eq:vb+_s_2}
\Pi_K^*(\mb w^*,\mb u^*,p^*)&:=(\Pi_k^c\mb w^*,\Pi_{k+1,-\tau_n}^B\mb u^*,\Pi_k p^*).
\end{align}
\end{subequations}
For each $K\in\mc T_h^p$, we choose the stabilization functions such that $c_1h_K\le \tau_n\big|_{\pp K}\le c_2h_K$ and $c_1h_K^{-1}\le \tau_t\big|_{\pp K}\le c_2h_K^{-1}$. We choose the projection
\begin{subequations}\label{eq:vb+_ns}
\begin{align}
\label{eq:vb+_ns_1}
\Pi_K(\mb w,\mb u,p)&:=(\Pi_k^c\mb w,\Pi_{k+1}\mb u,\Pi_k p),\\
\label{eq:vb+_ns_2}
\Pi_K^*(\mb w^*,\mb u^*,p^*)&:=(\Pi_k^c\mb w^*,\Pi_{k+1}\mb u^*,\Pi_k p^*).
\end{align}
\end{subequations}
We can verify that the projections $\Pi_K$ and $\Pi_K^*$ satisfy Assumption \ref{asmp:proj} for all $K\in\mc T_h$ by using \eqref{eq:eq_L2}, \eqref{eq:curl+_proj_cor}, and \eqref{eq:def_bdmh_1}.

\newparagraph{Variant $\mathfrak{H+}$:
$W\times V\times Q\times N\times M=
\mc P_k^3\times\mc P_{k+1}^3\times\mc P_{k+1}\times\mc P_k^t\oplus\nabla_F\widetilde{\mc P}_{k+2}\times\mc P_{k+1}$.}
\label{sec:HDG++}
To the best of our knowledge, this variant has not been considered before. We require $k\ge1$.

For each $K\in\mc T_h^s$, we choose the stabilization functions such that $\tau_n\neq0$, $(\tau_n\big|_{\pp K})^\mr{sec}\le c_1h_K$ and $c_1h_K^{-1}\le \tau_t\big|_{\pp K}\le c_2h_K^{-1}$. We choose the projection
\begin{subequations}\label{eq:vh+_s}
\begin{align}
\label{eq:vh+_s_1}
\Pi_K(\mb w,\mb u,p)&:=(\Pi_k^c\mb w,\Pi_{k+1,\tau_n}^H\mb u,\Pi_{k+1,\tau_n}^H p),\\
\label{eq:vh+_s_2}
\Pi_K^*(\mb w^*,\mb u^*,p^*)&:=(\Pi_k^c\mb w^*,\Pi_{k+1,-\tau_n}^H\mb u^*,\Pi_{k+1,-\tau_n}^H p^*).
\end{align}
\end{subequations}
For each $K\in\mc T_h^p$, we choose the stabilization functions such that $c_1h_K\le \tau_n\big|_{\pp K}\le c_2h_K$ and $c_1h_K^{-1}\le \tau_t\big|_{\pp K}\le c_2h_K^{-1}$. We choose the projection
\begin{subequations}\label{eq:vh+_ns}
\begin{align}
\label{eq:vh+_ns_1}
\Pi_K(\mb w,\mb u,p)&:=(\Pi_k^c\mb w,\Pi_{k+1}\mb u,\Pi_{k+1} p),\\
\label{eq:vh+_ns_2}
\Pi_K^*(\mb w^*,\mb u^*,p^*)&:=(\Pi_k^c\mb w^*,\Pi_{k+1}\mb u^*,\Pi_{k+1} p^*).
\end{align}
\end{subequations}
We can verify that the projections $\Pi_K$ and $\Pi_K^*$ satisfy Assumption \ref{asmp:proj} for all $K\in\mc T_h$ by using \eqref{eq:eq_L2}, \eqref{eq:curl+_proj_cor},  \eqref{eq:def_hdgpr_1} and \eqref{eq:def_hdgpr_2}.

The following estimates hold simultaneously for all the four variants $\mathfrak{B}$, $\mathfrak{H}$, $\mathfrak{B+}$, $\mathfrak{H+}$.

\begin{prop}[estimates of boundary remainders]
If $K\in\mc T_h^s$, then
\begin{align}\label{eq:bdtn00}
\delta_{\tau_n}^{\Pi_K}=\delta_{-\tau_n}^{\Pi_K^*}=0.
\end{align}
If $K\in\mc T_h^p$, then
\begin{subequations}\label{eq:bdtn}
\begin{align}
\label{eq:bdtn_1}
\|\tau_n^{-1/2}\delta_{\tau_n}^{\Pi_K}\|_{\pp K}
&\le C\left(
h_K^{s-1}|\mb u|_{s,K}+ h_K^{t}|p|_{t,K}\right),\\
\label{eq:bdtn_2}
\|\tau_n^{-1/2}\delta_{-\tau_n}^{\Pi_K^*}\|_{\pp K}
&\le C\left(
h_K^{s-1}|\mb u^*|_{s,K}+ h_K^{t}|p^*|_{t,K}\right).
\end{align}
\end{subequations}
For all $K\in\mc T_h$, we have
\begin{subequations}\label{eq:bdtt}
\begin{align}
\label{eq:bdtt_1}
\|\tau_t^{-1/2}\bs\delta_{\tau_t}^{\Pi_K}\|_{\pp K}
&\le C\left(
h_K^m|\mb w|_{m,K}+h_K^{s-1}|\mb u|_{s,K}+ h_K^{t}|p|_{t,K}\right),\\
\label{eq:bdtt_2}
\|\tau_t^{-1/2}\bs\delta_{-\tau_t}^{\Pi_K^*}\|_{\pp K}
&\le C\left(
h_K^m|\mb w^*|_{m,K}+h_K^{s-1}|\mb u^*|_{s,K}+ h_K^{t}|p^*|_{t,K}\right).
\end{align}
\end{subequations}
In the above estimates, $m\in[1,k+1]$ and $s,t\in[1,k+2]$ for variants $\mathfrak{H}$ and $\mathfrak{H+}$; $m,t\in[1,k+1]$ and $s\in[1,k+2]$ for variants $\mathfrak{B}$ and $\mathfrak{B+}$. The constant $C$ depends only on $k$, $c_1$, $c_2$, and the shape-regularity of $K$.
\end{prop}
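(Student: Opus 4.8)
The plan is to treat all four remainder families uniformly by exploiting three structural facts and then reducing everything to the projection error bounds of Section \ref{sec:gal_proj}. First, by \eqref{eq:hdg_sp_4} and \eqref{eq:hdg_sp_5} the quantities $\mb n\times\Pi_K\mb w$, $\Pi_K\mb u\cdot\mb n$, and $\gamma_{\pp K}\Pi_K p$ already lie in $N(\pp K)$ or $M(\pp K)$, so each remainder can be rewritten as a single application of $\mr P_N$ or $\mr P_M$ to a pure projection error; concretely I would write $\bs\delta_{\tau_t}^{\Pi_K}=\mr P_N\big(\mb n\times(\Pi_K\mb w-\mb w)\big)+\tau_t\mr P_N(\Pi_K\mb u-\mb u)$ and $\delta_{\tau_n}^{\Pi_K}=\mr P_M\big((\Pi_K\mb u-\mb u)\cdot\mb n+\tau_n(\Pi_K p-p)\big)$. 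Second, since $\tau_t,\tau_n$ are piecewise constant on faces, the weights $\tau^{\pm1/2}$ commute with $\mr P_N,\mr P_M$. Third, $\mr P_N,\mr P_M$ are $L^2$-orthogonal projections, hence norm-nonincreasing. After these reductions the only remaining inputs are the scalings $\tau_t\sim h_K^{-1}$ (all $K$) and $\tau_n\sim h_K$ (for $K\in\mc T_h^p$), together with \eqref{eq:conv_L2}, \eqref{eq:curl+_conv}, \eqref{eq:conv_hdg_1}, and \eqref{eq:conv_bdmh}.

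For the vanishing identity \eqref{eq:bdtn00} on simplices, I would use the representation of $\delta_{\tau_n}^{\Pi_K}$ above. For each $\mu\in M(\pp K)=\mc R_{k+1}(\pp K)$ the product $\tau_n\mu$ again lies in $\mc R_{k+1}(\pp K)$, so the bracket $(\Pi_K\mb u-\mb u)\cdot\mb n+\tau_n(\Pi_K p-p)$ is $L^2(\pp K)$-orthogonal to $M(\pp K)$: this is precisely the defining boundary equation \eqref{eq:def_bdmh_2} of $\Pi_{k+1,\tau_n}^B$ for variants $\mathfrak{B}$, $\mathfrak{B+}$ (where $\Pi_{k-1}p$ there specializes to $\Pi_k p=\Pi_K p$), and \eqref{eq:def_hdgpr_3} of $\Pi_{k+1,\tau_n}^H$ for variants $\mathfrak{H}$, $\mathfrak{H+}$. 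Applying $\mr P_M$ then annihilates the bracket, giving $\delta_{\tau_n}^{\Pi_K}=0$; the dual statement $\delta_{-\tau_n}^{\Pi_K^*}=0$ is identical after replacing $\tau_n$ by $-\tau_n$ and $\Pi_K$ by $\Pi_K^*$.

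For the polyhedral grad-div bound \eqref{eq:bdtn} and the curl-curl bound \eqref{eq:bdtt} I would commute the weight through $\mr P_M$ (resp.\ $\mr P_N$), discard the projection by norm-nonincrease, and split by the triangle inequality. For \eqref{eq:bdtn}, with $\tau_n\sim h_K$ this leaves $h_K^{-1/2}\|\Pi_{k+1}\mb u-\mb u\|_{\pp K}+h_K^{1/2}\|\Pi_K p-p\|_{\pp K}$, and \eqref{eq:conv_L2} gives exactly $h_K^{s-1}|\mb u|_{s,K}+h_K^t|p|_{t,K}$. For \eqref{eq:bdtt}, the $\mb w$-part weighted by $\tau_t^{-1/2}\sim h_K^{1/2}$ is controlled by $h_K^{1/2}\|\Pi_K\mb w-\mb w\|_{\pp K}$, which is $h_K^m|\mb w|_{m,K}$ via \eqref{eq:conv_L2} for variants $\mathfrak{B}$, $\mathfrak{H}$ and via \eqref{eq:curl+_conv} for variants $\mathfrak{B+}$, $\mathfrak{H+}$; the $\mb u$-part becomes $h_K^{-1/2}\|\Pi_K\mb u-\mb u\|_{\pp K}$, bounded by \eqref{eq:conv_L2} on $\mc T_h^p$ (no $p$-dependence, so the extra $h_K^t|p|_{t,K}$ in the claim is simply harmless there), by \eqref{eq:conv_hdg_1} on $\mc T_h^s$ for the $\mathfrak{H}$-variants, and by \eqref{eq:conv_bdmh} on $\mc T_h^s$ for the $\mathfrak{B}$-variants.

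The main obstacle---and the only place where the choice of stabilization enters quantitatively---is absorbing the $p$-dependence of the $\mb u$-projection on simplices into the stated $h_K^t|p|_{t,K}$ term without losing an order. Here the $\mb u$-part produces $\tau_n^\mr{sec}h_K^{t-1}|p|_{t,K}$ from the HDG estimate \eqref{eq:conv_hdg_1} and $\tau_n^\mr{max}h_K^{t-1}|p|_{t,K}$ from the BDM-H estimate \eqref{eq:conv_bdmh}. These are exactly matched by the stabilization conditions imposed on simplices, namely $(\tau_n\big|_{\pp K})^\mr{sec}\le c_1h_K$ for variants $\mathfrak{H}$, $\mathfrak{H+}$ and $\tau_n\big|_{\pp K}\le c_1h_K$ for variants $\mathfrak{B}$, $\mathfrak{B+}$, both collapsing the offending factor to $h_K^t|p|_{t,K}$. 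This is precisely why the $\mathfrak{H}$-variants may carry one arbitrarily large face-value of $\tau_n$, only its second-largest value needing to scale like $h_K$. Finally I would read off the index ranges from the governing projection estimates ($m\in[1,k+1]$ from \eqref{eq:conv_L2}/\eqref{eq:curl+_conv}, $s\in[1,k+2]$ throughout, and $t\in[1,k+2]$ for the $\mathfrak{H}$-variants versus $t\in[1,k+1]$ for the $\mathfrak{B}$-variants on account of \eqref{eq:conv_bdmh}), the dual bounds \eqref{eq:bdtn_2} and \eqref{eq:bdtt_2} following verbatim with $-\tau_t$, $-\tau_n$, and $\Pi_K^*$.
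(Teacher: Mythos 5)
Your proposal is correct and follows essentially the same route as the paper's proof: the paper likewise splits each remainder into the two projection-error pieces (its $T_1^K$, $T_2^K$ in Table \ref{tb:val_t12_bd}), obtains the exact vanishing on simplices from the boundary orthogonality relations \eqref{eq:def_bdmh_2} and \eqref{eq:def_hdgpr_3}, and closes with \eqref{eq:conv_L2}, \eqref{eq:curl+_conv}, \eqref{eq:conv_hdg_1}, \eqref{eq:conv_bdmh} together with the scalings $\tau_t\approx h_K^{-1}$ and $\tau_n\lesssim h_K$ (resp.\ $\tau_n^{\mathrm{sec}}\lesssim h_K$). Your explicit remarks on commuting the facewise-constant weights through $\mr P_N,\mr P_M$ and on absorbing the $\tau_n^{\mathrm{sec}}$/$\tau_n^{\mathrm{max}}$ factors are exactly the (implicit) mechanics of the paper's argument.
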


\begin{proof}
{\bf Step 1--estimates about $\delta_{\tau_n}^{\Pi_K}$ and $\delta_{-\tau_n}^{\Pi_K^*}$.}
First note that for the all variants, $\mr P_M$ has fixed meaning - the $L^2$ projection to $M(\pp K):=\prod_{F\in\mc E_K}N(F)=\prod_{F\in\mc E_K}\mc P_{k+1}(F)$.
We next show that $\delta_{\tau_n}^{\Pi_K}=\delta_{-\tau_n}^{\Pi_K^*}=0$ if $K\in\mc T_h^s$. 
For variants $\mathfrak{B}$ and $\mathfrak{B+}$, by \eqref{eq:def_bdrm_2}, \eqref{eq:vb_s_1} and \eqref{eq:vb+_s_1} we have
\begin{align*}
\delta_{\tau_n}^{\Pi_K} = (\Pi_{k+1,\tau_n}^B\mb u)\cdot\mb n-\mr P_M(\mb u\cdot\mb n)+\tau_n(\Pi_kp-\mr P_Mp).
\end{align*}
Now by \eqref{eq:def_bdmh_2} we have $\delta_{\tau_n}^{\Pi_K}=0$. By \eqref{eq:vb_s_2} and \eqref{eq:vb+_s_2}, we can similarly obtain $\delta_{-\tau_n}^{\Pi_K^*}=0$. For variants $\mathfrak{H}$ and $\mathfrak{H+}$, by \eqref{eq:def_bdrm_2}, \eqref{eq:vh_s_1} and \eqref{eq:vh+_s_1} we have
\begin{align*}
\delta_{\tau_n}^{\Pi_K} = (\Pi_{k+1,\tau_n}^H\mb u)\cdot\mb n-\mr P_M(\mb u\cdot\mb n)+\tau_n(\Pi_{k+1,\tau_n}^Hp-\mr P_Mp).
\end{align*}
Hence $\delta_{\tau_n}^{\Pi_K}=0$ by \eqref{eq:def_hdgpr_3}. By \eqref{eq:vh_s_2} and \eqref{eq:vh+_s_2}, we can similarly obtain $\delta_{-\tau_n}^{\Pi_K^*}=0$.

Consider $K\in\mc T_h^p$. By \eqref{eq:def_bdrm_2}, \eqref{eq:vb_ns_1}, \eqref{eq:vh_ns_1}, \eqref{eq:vb+_ns_1} and \eqref{eq:vh+_ns_1}, we have
\begin{align*}
\tau_n^{-1/2}\delta_{\tau_n}^{\Pi_K}=
\left\{
\begin{array}{ll}
\tau_n^{-1/2}(\Pi_{k+1}\mb u\cdot\mb n-\mr P_M(\mb u\cdot\mb n))+\tau_n^{1/2}(\Pi_{k}p-\mr P_Mp)&\mr{for}\ \mathfrak{B,B+},\\[5pt]
\tau_n^{-1/2}(\Pi_{k+1}\mb u\cdot\mb n-\mr P_M(\mb u\cdot\mb n))+\tau_n^{1/2}(\Pi_{k+1}p-\mr P_Mp)&\mr{for}\ \mathfrak{H,H+}.
\end{array}
\right.
\end{align*}
The above with the fact that $\tau_n\big|_{\pp K}\approx h_K$ for all $K\in\mc T_h^p$ implies \eqref{eq:bdtn_1}. We can similarly obtain \eqref{eq:bdtn_2} by \eqref{eq:def_bdrm_2}, \eqref{eq:vb_ns_2}, \eqref{eq:vh_ns_2}, \eqref{eq:vb+_ns_2} and \eqref{eq:vh+_ns_2}.\\

\noindent{\bf Step 2--estimates concerning $\bs\delta_{\tau_t}^{\Pi_K}$ and $\bs\delta_{-\tau_t}^{\Pi_K^*}$.}\\
First note that $N(\pp K)=\prod_{F\in\mc E_K}\mc P_{k+1}(F)^t$ for variants $\mathfrak{B}$ and $\mathfrak{H}$ while $N(\pp K)=\prod_{F\in\mc E_K}\mc P_{k}(F)^t\oplus\nabla_F\widetilde{\mc P}_{k+2}(F)$ for variants $\mathfrak{B+}$ and $\mathfrak{H+}$. Hence the projection $\mr P_N$ --defined as the $L^2$ projection to $N(\pp K)$-- will change its meaning accordingly depending on which variant is considered.
Now, by \eqref{eq:def_bdrm_1}, \eqref{eq:vb_s_1}, \eqref{eq:vb_ns_1}, \eqref{eq:vh_s_1}, \eqref{eq:vh_ns_1}, \eqref{eq:vb+_s_1}, \eqref{eq:vb+_ns_1}, \eqref{eq:vh+_s_1} and \eqref{eq:vh+_ns_1}, we have
\begin{align*}
\tau_t^{-1/2}\bs\delta_{\tau_t}^{\Pi_K}
=\tau_t^{-1/2}\mr P_NT_1+\tau_t^{1/2}\mr P_NT_2,
\end{align*}
where $T_1^K$ and $T_2^K$ are defined by the values in Table \ref{tb:val_t12_bd}.

\begin{table}[ht]
\centering
\begin{tabular}{|c|c|c|c|c|}
\hline
 & \multicolumn{2}{c|}{Simplex $K$} & \multicolumn{2}{c|}{Non-simplex $K$}\\
\hline
Variant & $T_1^K$ & $T_2^K$ & $T_1^K$ & $T_2^K$\\
\hline
$\mathfrak{B}$& $\mb n\times\Pi_k\mb w-\mb n\times\mb w$ & $\Pi_{k+1,\tau_n}^B\mb u-\mb u$ & $\mb n\times\Pi_k\mb w-\mb n\times\mb w$ & $\Pi_{k+1}\mb u-\mb u$\\
$\mathfrak{H}$ & $\mb n\times\Pi_k\mb w-\mb n\times\mb w$ &  $\Pi_{k+1,\tau_n}^H\mb u-\mb u$ & $\mb n\times\Pi_k\mb w-\mb n\times\mb w$ & $\Pi_{k+1}\mb u-\mb u$\\
$\mathfrak{B+}$ & $\mb n\times\Pi_k^c\mb w-\mb n\times\mb w$ &$\Pi_{k+1,\tau_n}^B\mb u-\mb u$& $\mb n\times\Pi_k^c\mb w-\mb n\times\mb w$ & $\Pi_{k+1}\mb u-\mb u$ \\
$\mathfrak{H+}$ &$\mb n\times\Pi_k^c\mb w-\mb n\times\mb w$&$\Pi_{k+1,\tau_n}^H\mb u-\mb u$&$\mb n\times\Pi_k^c\mb w-\mb n\times\mb w$&$\Pi_{k+1}\mb u-\mb u$\\
\hline
\end{tabular}
\caption{Values of $T_1^K$ and $T_2^K$ for variants $\mathfrak{B}$, $\mathfrak{H}$, $\mathfrak{B+}$, $\mathfrak{H+}$ on simplex elements and non-simplex elements.}
\label{tb:val_t12_bd}
\end{table}

Recall that $\tau_t\big|_{\pp K}\approx h_K^{-1}$ for all elements $K\in\mc T_h$ and all variants.
From Table \ref{tb:val_t12_bd} and by \eqref{eq:conv_L2} and \eqref{eq:curl+_conv}, we obtain that
\begin{align*}
\|T_1^K\|_{\pp K}\lesssim h_K^{m-1/2}|\mb w|_{m,K}\qquad\forall m\in[1,k+1].
\end{align*}
Also from Table \ref{tb:val_t12_bd}, and by \eqref{eq:conv_L2}, \eqref{eq:conv_bdmh} and \eqref{eq:conv_hdg_1}, we have
\begin{align*}
\|T_2^K\|_{\pp K}\lesssim h_K^{s-1/2}|\mb u|_{s,K}+h_K^{t+1/2}|p|_{t,K},
\end{align*}
where $s\in[1,k+2]$ for all variants; $t\in[1,k+1]$ for variants $\mathfrak{B}$ and $\mathfrak{B+}$,  and $t\in[1,k+2]$ for variants $\mathfrak{H}$ and $\mathfrak{H+}$.
We can give a similar estimate to $\bs\delta_{-\tau_t}^{\Pi_K^*}$ by using \eqref{eq:def_bdrm_1}
 \eqref{eq:vb_s_2}, \eqref{eq:vb_ns_2}, \eqref{eq:vh_s_2}, \eqref{eq:vh_ns_2}, \eqref{eq:vb+_s_2}, \eqref{eq:vb+_ns_2}, \eqref{eq:vh+_s_2} and \eqref{eq:vh+_ns_2}. This completes the proof.
\end{proof}

\begin{theorem}[$L^2$ estimates of $\mb w_h$ and $\mb u_h$]\label{thm:est_variants}
We have
\begin{align}\label{eq:est_wh}
&\|\mb w-\mb w_h\|_{\mc T_h}
+\|\tau_t^{1/2}\mr P_N(\bs\varepsilon_h^u-\widehat{\bs\varepsilon}_h^u)\|_{\pp\mc T_h}
+\|\tau_n^{1/2}(\varepsilon_h^p-\widehat{\varepsilon}_h^p)\|_{\pp\mc T_h}\\
\nonumber
&\qquad\le C_1\left( h^m|\mb w|_{m,\Omega}+h^{s-1}|\mb u|_{s,\Omega}+h^t|p|_{t,\Omega}\right).
\end{align}
If the regularity condition \eqref{eq:ell_reg} holds (with $r_1=1$ and $r_2=2$), then we have
\begin{align}\label{eq:est_uh}
\|\mb u-\mb u_h\|_{\mc T_h}\le C_2\left( h^{m+1}|\mb w|_{m,\Omega}+h^{s}|\mb u|_{s,\Omega}+h^{t+1}|p|_{t,\Omega}\right).
\end{align}
Here, $m\in[1,k+1]$ and $s,t\in[1,k+2]$ for variants $\mathfrak{H}$ and $\mathfrak{H+}$; $m,t\in[1,k+1]$ and $s\in[1,k+2]$ for variants $\mathfrak{B}$ and $\mathfrak{B+}$. The constant $C_1$ depends only on $k$, $c_1$, $c_2$ and the shape-regularity of $\mc T_h$ while $C_2$ depends additionally on $C_\mr{reg}$.
\end{theorem}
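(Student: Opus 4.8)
The plan is to combine the two abstract identities (Propositions~\ref{prop:ene_id} and~\ref{prop:duality_id}) with the boundary-remainder estimates just proved and the projection convergence rates from Section~\ref{sec:gal_proj}. First I would establish \eqref{eq:est_wh}. Starting from the energy identity \eqref{eq:ene_id}, I apply Cauchy--Schwarz to each of the three terms on the right-hand side. The first term $(\Pi\mb w-\mb w,\bs\varepsilon_h^w)_{\mc T_h}$ is handled by Young's inequality, absorbing $\|\bs\varepsilon_h^w\|_{\mc T_h}^2$ into the first term on the left and leaving $\|\Pi\mb w-\mb w\|_{\mc T_h}^2$, which is controlled by \eqref{eq:conv_L2} or \eqref{eq:curl+_conv}. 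For the two boundary terms I would write, schematically,
\begin{align*}
\dualpr{\bs\delta_{\tau_t}^\Pi,\mr P_N\bs\varepsilon_h^u-\widehat{\bs\varepsilon}_h^u}_{\pp\mc T_h}
\le \|\tau_t^{-1/2}\bs\delta_{\tau_t}^\Pi\|_{\pp\mc T_h}\,\|\tau_t^{1/2}(\mr P_N\bs\varepsilon_h^u-\widehat{\bs\varepsilon}_h^u)\|_{\pp\mc T_h},
\end{align*}
and similarly for the $\delta_{\tau_n}^\Pi$ term, so that the second factors are again absorbed into the left-hand side of \eqref{eq:ene_id} via Young's inequality, while the first factors are bounded by \eqref{eq:bdtt} and \eqref{eq:bdtn} (with the simplex terms vanishing by \eqref{eq:bdtn00}). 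Summing over $K\in\mc T_h$ and using $h_K\le h$ together with the regularity seminorms then yields \eqref{eq:est_wh}; the delicate point is checking that the exponent ranges of $m,s,t$ quoted in the theorem are exactly those permitted simultaneously by \eqref{eq:bdtt} and \eqref{eq:conv_L2}/\eqref{eq:curl+_conv}, which differ between the $\mathfrak{B}$-type and $\mathfrak{H}$-type variants.

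Next I would prove the duality estimate \eqref{eq:est_uh}. Setting $\bs\theta=\bs\varepsilon_h^u$ in the dual problem \eqref{eq:dual_eq} makes the last term of \eqref{eq:duality_id} equal to $\|\bs\varepsilon_h^u\|_{\mc T_h}^2$, so \eqref{eq:duality_id} expresses $\|\bs\varepsilon_h^u\|_{\mc T_h}^2$ as the remaining five terms. Each of these I bound by Cauchy--Schwarz: the terms $(\Pi\mb w-\mb w,\Pi^*\mb w^*)_{\mc T_h}$ and $(\Pi^*\mb w^*-\mb w^*,\Pi\mb w-\mb w_h)_{\mc T_h}$ use projection approximation of $\mb w,\mb w^*$ plus the already-derived bound on $\|\Pi\mb w-\mb w_h\|_{\mc T_h}$ from \eqref{eq:est_wh}; the four boundary terms pair a $\tau$-weighted remainder (bounded by \eqref{eq:bdtt}/\eqref{eq:bdtn} applied to both the primal data and the dual data) against the corresponding projection error of the dual variables, or against the energy quantity $\|\tau_t^{1/2}(\mr P_N\bs\varepsilon_h^u-\widehat{\bs\varepsilon}_h^u)\|_{\pp\mc T_h}$ already controlled by \eqref{eq:est_wh}. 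The crucial gain of one power of $h$ comes from invoking Assumption~\ref{asmp:ell_reg} with $r_1=1$, $r_2=2$: the dual solution satisfies $\|\mb w^*\|_{1,\Omega}+\|\mb u^*\|_{2,\Omega}+\|p^*\|_{r_3,\Omega}\le C_\mr{reg}\|\bs\varepsilon_h^u\|_{\Omega}$, so every factor involving the dual variables contributes an extra $h$ (e.g.\ $h_K\,|\mb w^*|_{1,K}$ or $h_K\,|\mb u^*|_{2,K}$) together with the factor $\|\bs\varepsilon_h^u\|_{\mc T_h}$.

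After collecting all contributions, one factor of $\|\bs\varepsilon_h^u\|_{\mc T_h}$ can be cancelled from both sides, leaving a bound of the form $\|\bs\varepsilon_h^u\|_{\mc T_h}\le C_2(h^{m+1}|\mb w|_{m,\Omega}+h^s|\mb u|_{s,\Omega}+h^{t+1}|p|_{t,\Omega})$; the final estimate \eqref{eq:est_uh} then follows from the triangle inequality $\|\mb u-\mb u_h\|_{\mc T_h}\le\|\mb u-\Pi\mb u\|_{\mc T_h}+\|\bs\varepsilon_h^u\|_{\mc T_h}$, noting that the projection error $\|\mb u-\Pi\mb u\|_{\mc T_h}$ is of order $h^s|\mb u|_{s,\Omega}$ by \eqref{eq:conv_bdmh}/\eqref{eq:conv_hdg_1}/\eqref{eq:conv_L2} and is thus already absorbed. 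I expect the main obstacle to be the careful bookkeeping in the duality argument: ensuring that each of the five terms genuinely carries the extra power of $h$ from the dual regularity and that no term only achieves the lower $\|\bs\varepsilon_h^u\|$-rate without a compensating $h$, which is precisely where the choice of the \emph{adjoint} projections $\Pi_K^*$ with stabilization $-\tau_n,-\tau_t$ (so that $\delta_{-\tau_n}^{\Pi_K^*}$, $\bs\delta_{-\tau_t}^{\Pi_K^*}$ appear with the correct sign in \eqref{eq:duality_id}) becomes essential. I would also need to verify that the $p^*$-contributions, governed by $r_3\ge 1$, do not degrade the rate, using $\tau_n\approx h_K$ on polyhedral elements and the vanishing of $\delta_{-\tau_n}^{\Pi_K^*}$ on simplices.
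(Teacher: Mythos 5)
Your overall strategy coincides with the paper's: the energy identity \eqref{eq:ene_id} plus the boundary-remainder bounds \eqref{eq:bdtn00}, \eqref{eq:bdtn}, \eqref{eq:bdtt} and the projection estimates \eqref{eq:conv_L2}, \eqref{eq:curl+_conv} give \eqref{eq:est_wh}; then the duality identity \eqref{eq:duality_id} with $\bs\theta=\bs\varepsilon_h^u$, the dual regularity with $r_1=1$, $r_2=2$, cancellation of one factor of $\|\bs\varepsilon_h^u\|_{\mc T_h}$, and the triangle inequality give \eqref{eq:est_uh}. The first half of your argument is complete and matches the paper's proof.

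There is, however, one concrete gap in the duality step, precisely at the point you flag as ``the main obstacle'' but do not resolve: the volume term $(\Pi\mb w-\mb w,\Pi^*\mb w^*)_{\mc T_h}$ on the left of \eqref{eq:duality_id}. A direct Cauchy--Schwarz gives $\|\Pi\mb w-\mb w\|_{\mc T_h}\,\|\Pi^*\mb w^*\|_{\mc T_h}\lesssim h^m|\mb w|_{m,\Omega}\,\|\mb w^*\|_{\Omega}\lesssim h^m|\mb w|_{m,\Omega}\,\|\bs\varepsilon_h^u\|_{\mc T_h}$, with \emph{no} extra power of $h$, since $\|\Pi^*\mb w^*\|_{\mc T_h}$ is not a projection error and the regularity bound only controls it by $\|\bs\varepsilon_h^u\|_{\mc T_h}$ at order one. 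This would yield $h^m$ instead of $h^{m+1}$ in \eqref{eq:est_uh}. The paper's fix is twofold: first, combine this term with $-(\Pi^*\mb w^*-\mb w^*,\Pi\mb w-\mb w_h)_{\mc T_h}$ from the right-hand side to produce $(\Pi^*\mb w^*-\mb w^*,\mb w_h-\mb w)_{\mc T_h}$, which carries $h\|\mb w^*\|_{1,\Omega}$; second, for the residual piece $(\Pi\mb w-\mb w,\mb w^*)_{\mc T_h}$ use the orthogonality of $\Pi\mb w-\mb w$ to elementwise constants --- the $L^2$-projection property for variants $\mathfrak{B},\mathfrak{H}$ and identity \eqref{eq:curl+_0order} for variants $\mathfrak{B+},\mathfrak{H+}$ (this is exactly where the requirement $k\ge1$ for the $+$ variants enters) --- to replace $\mb w^*$ by $\mb w^*-\Pi_0\mb w^*$ and gain the factor $h\|\mb w^*\|_{1,\Omega}$. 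This is the origin of the term $\|\Pi\mb w-\mb w\|_{\mc T_h}\|\mb w^*-\Pi_0\mb w^*\|_{\mc T_h}$ in the paper's inequality \eqref{eq:prf_7}. Without citing \eqref{eq:curl+_0order} (or the analogous $L^2$-orthogonality), your bookkeeping cannot close at the claimed rate. The remaining four boundary pairings are handled exactly as you describe.
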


\begin{proof}
By \eqref{eq:ene_id} and \eqref{eq:bdtn00} we have
\begin{align*}
&\|\bs\varepsilon_h^w\|_{\mc T_h}
+\|\tau_t^{1/2}(\mr P_N\bs\varepsilon_h^u-\widehat{\bs\varepsilon}_h^u)\|_{\pp\mc T_h}
+\|\tau_n^{1/2}(\varepsilon_h^p-\widehat{\varepsilon}_h^p)\|_{\pp\mc T_h}\\
&\quad\lesssim \|\Pi\mb w-\mb w\|_{\mc T_h}+\|\tau_t^{-1/2}\bs\delta_{\tau_t}^\Pi\|_{\pp\mc T_h}+\|\tau_n^{-1/2}\delta_{\tau_n}^\Pi\|_{\pp\mc T_h^p}.
\end{align*}
Note that $\Pi\mb w=\Pi_k\mb w$ for variants $\mathfrak{B}$ and $\mathfrak{H}$, and $\Pi\mb w=\Pi_k^c\mb w$ for variants $\mathfrak{B+}$ and $\mathfrak{H+}$.
We next use \eqref{eq:bdtn_1}, \eqref{eq:bdtt_1}, \eqref{eq:conv_L2} and \eqref{eq:curl+_conv}. Then \eqref{eq:est_wh} is obtained.

We next consider \eqref{eq:est_uh}. Let $\bs\theta=\bs\varepsilon_h^u$ in the dual equations \eqref{eq:dual_eq}. By \eqref{eq:duality_id}, \eqref{eq:curl+_0order} and \eqref{eq:bdtn00} we have
\begin{align}
\label{eq:prf_7}
\|\bs\varepsilon_h^u\|_{\mc T_h}^2&\le
\|\Pi^*\mb w^*-\mb w^*\|_{\mc T_h}\|\mb w_h-\mb w\|_{\mc T_h}+\|\Pi\mb w-\mb w\|_{\mc T_h}\|\mb w^*-\Pi_0\mb w^*\|_{\mc T_h}\\
\nonumber
&\quad+\|\tau_t^{-1/2}\bs\delta_{\tau_t}^\Pi\|_{\pp\mc T_h}\|\tau_t^{1/2}(\Pi^*\mb u^*-\mr P_N\mb u^*)\|_{\pp\mc T_h}\\
\nonumber
&\quad+\|\tau_n^{-1/2}\delta_{\tau_n}^\Pi\|_{\pp\mc T_h^p}\|\tau_n^{1/2}(\Pi^*p^*-\mr P_Mp^*)\|_{\pp\mc T_h^p}\\
\nonumber
&\quad+\|\tau_t^{-1/2}\bs\delta_{-\tau_t}^{\Pi^*}\|_{\pp\mc T_h}\|\tau_t^{1/2}(\mr P_N\bs\varepsilon_h^u-\widehat{\bs\varepsilon}_h^u)\|_{\pp\mc T_h}\\
\nonumber
&\quad+\|\tau_n^{-1/2}\delta_{-\tau_n}^{\Pi^*}\|_{\pp\mc T_h^p}
\|\tau_n^{1/2}(\varepsilon_h^p-\widehat{\varepsilon}_h^p)\|_{\pp\mc T_h^p}.
\end{align}
Note that in the above inequality, we have $\Pi^*\mb w^*=\Pi_k\mb w^*$ for variants $\mathfrak{B}$ and $\mathfrak{H}$, and $\Pi^*\mb w^*=\Pi_k^c\mb w^*$ for variants $\mathfrak{B+}$ and $\mathfrak{H+}$. Therefore, by \eqref{eq:conv_L2} and \eqref{eq:curl+_conv} we have
\begin{align*}
\|\Pi^*\mb w^*-\mb w^*\|_{\mc T_h}+\|\mb w^*-\Pi_0\mb w^*\|_{\mc T_h}
\lesssim h\|\mb w^*\|_{1,\Omega}.
\end{align*}
Recall that for all variants $\tau_t\big|_K\approx h_K^{-1}$ for all $K\in\mc T_h$ and $\tau_n\big|_{\pp K}\approx h_K$ for all $K\in\mc T_h^p$.
These with \eqref{eq:conv_L2}, \eqref{eq:conv_hdg_1} and \eqref{eq:conv_bdmh} imply
\begin{align*}
\|\tau_t^{1/2}(\Pi^*\mb u^*-\mr P_N\mb u^*)\|_{\pp\mc T_h}
+\|\tau_n^{1/2}(\Pi^*p^*-\mr P_Mp^*)\|_{\pp\mc T_h^p}
\lesssim h(\|\mb u^*\|_{2,\Omega}+\|p^*\|_{1,\Omega}).
\end{align*}
By \eqref{eq:bdtn_2} and \eqref{eq:bdtt_2} we have
\begin{align*}
\|\tau_t^{-1/2}\bs\delta_{-\tau_t}^{\Pi^*}\|_{\pp\mc T_h}
+\|\tau_n^{-1/2}\delta_{-\tau_n}^{\Pi^*}\|_{\pp\mc T_h^p}
\lesssim h(\|\mb w^*\|_{1,\Omega}+\|\mb u^*\|_{2,\Omega}+\|p^*\|_{1,\Omega}).
\end{align*}
Therefore, by the regularity assumption \eqref{eq:ell_reg} (with $r_1=1$ and $r_2=2$), we have
\begin{align*}
\|\bs\varepsilon_h^u\|_{\mc T_h}
&\lesssim h\big(
\|\mb w_h-\mb w\|_{\mc T_h}+\|\Pi\mb w-\mb w\|_{\mc T_h}
+\|\tau_t^{-1/2}\bs\delta_{\tau_t}^\Pi\|_{\pp\mc T_h}
+\|\tau_n^{-1/2}\delta_{\tau_n}^\Pi\|_{\pp\mc T_h^p}\\
&\quad +\|\tau_t^{1/2}(\mr P_N\bs\varepsilon_h^u-\widehat{\bs\varepsilon}_h^u)\|_{\pp\mc T_h}
+\|\tau_n^{1/2}(\varepsilon_h^p-\widehat{\varepsilon}_h^p)\|_{\pp\mc T_h^p}\big).
\end{align*}
Combing the above with \eqref{eq:est_wh}, \eqref{eq:bdtn_1}, \eqref{eq:bdtt_1}, \eqref{eq:conv_L2} and \eqref{eq:curl+_conv}, we obtain \eqref{eq:est_uh}. This completes the proof. 
\end{proof}

Now we summarize the results obtained in this subsection. Table \ref{tb:var_sumry} gives an overview of the choices of the approximation spaces and stabilization functions for the four variants we have analyzed.
\begin{table}[ht]
\centering
\begin{tabular}{|c|c|c|c|c|c|c|c|}
\hline
Variant & $k$ & $Q$ & $N$ & $\tau_n$\\
\hline
$\mathfrak{B}$ & $k\ge0$ & $\mc P_{k}$ & $\mc P_{k+1}^t$ & $
\begin{array}{cc}
\lesssim h_K & K\in\mc T_h^s\\
\approx h_K & K\in\mc T_h^p
\end{array}$\\
\hline
$\mathfrak{H}$ & $k\ge0$ & $\mc P_{k+1}$ & $\mc P_{k+1}^t$ & $
\begin{array}{cc}
0\neq\tau_n,\ \tau_n^\mr{sec}\lesssim h_K & K\in\mc T_h^s\\
\approx h_K & K\in\mc T_h^p
\end{array}$ \\
\hline
$\mathfrak{B+}$ & $k\ge1$ & $\mc P_{k}$ & $\mc P_{k}^t\oplus\nabla_F\widetilde{\mc P}_{k+2}$ & $
\begin{array}{cc}
\lesssim h_K & K\in\mc T_h^s\\
\approx h_K & K\in\mc T_h^p
\end{array}$ \\
\hline
$\mathfrak{H+}$ & $k\ge1$ & $\mc P_{k+1}$ & $\mc P_{k}^t\oplus\nabla_F\widetilde{\mc P}_{k+2}$ & $
\begin{array}{cc}
0\neq\tau_n,\ \tau_n^\mr{sec}\lesssim h_K & K\in\mc T_h^s\\
\approx h_K & K\in\mc T_h^p
\end{array}$\\
\hline
\end{tabular}
\caption{Approximation spaces and stabilization functions of variants $\mathfrak{B}$, $\mathfrak{H}$, $\mathfrak{B+}$, $\mathfrak{H+}$. For all the four variants, $W=\mc P_k^3$, $V=\mc P_{k+1}^3$, $M=\mc P_{k+1}$, and $\tau_t\approx h_K^{-1}$.}
\label{tb:var_sumry}
\end{table}

We have proved that for all the four HDG variants, $\mb w_h$ and $\mb u_h$ are optimally convergent in $L^2$ norms. From Table \ref{tb:var_sumry}, we observe that the variants $\mathfrak{B+}$ and $\mathfrak{H+}$, compared with $\mathfrak{B}$ and $\mathfrak{H}$, use smaller trace spaces $N$ while achieve the same rate of convergence. Actually, by \eqref{eq:est_wh},
we have $\|\tau_t^{1/2}\widehat{\varepsilon}_h^u\|_{\pp\mc T_h}\lesssim h^{k+1}+\|\tau_t^{1/2}\mr P_N\varepsilon_h^u\|_{\pp\mc T_h}$ (for smooth enough exact solutions). Assuming $\mc T_h$ is quasi-uniform for simplicity, and then using \eqref{eq:est_uh} and the fact that $\tau_t\approx h^{-1}$, we have
$\|\tau_t^{1/2}\mr P_N\varepsilon_h^u\|_{\pp\mc T_h}\lesssim h^{k+1}$. Therefore
\begin{align}
\|\mr P_N\mb u-\widehat{\mb u}_h\|_h&:=\left(\sum_{K\in\mc T_h}
\|h_K^{1/2}(\mr P_N\mb u-\widehat{\mb u}_h)\|_{\pp K}^2\right)^{1/2}\\
&\approx \|\tau_t^{-1/2}\widehat{\bs\varepsilon}_h^u\|_{\pp\mc T_h}
\approx h\|\tau_t^{1/2}\widehat{\bs\varepsilon}_h^u\|_{\pp\mc T_h}\lesssim h^{k+2}.
\end{align}
Since $N(F)$ is a proper subspace of $\mc P_{k+1}(F)^t$ for variants $\mathfrak{B+}$ and $\mathfrak{H+}$, we say $\mr P_N\mb u-\widehat{\mb u}_h$ achieves superconvergence.
These superconvergence properties are due to the LS stabilization functions \cite{LeSc:2010, ChQiShSo:2017} used in their formulations. Correspondingly, our analysis of variants $\mathfrak{B+}$ and $\mathfrak{H+}$ involve using the projection defined by \eqref{eq:curl+_proj}, which we construct especially for this situation. We also observe that for variants $\mathfrak{H}$ and $\mathfrak{H+}$, only the second largest values of $\tau_n$ on the four faces of the simplex elements affect the convergence. This suggests that we can send one face value of $\tau_n$ to infinity for simplex elements $K$ and this will have no effect on the convergence of $\mb w_h$ and $\mb u_h$. This feature holds as a result of the convergence properties of the HDG projection \eqref{eq:conv_hdg_1} and we will verify this feature in the numerical experiments.

\subsection{Estimates for solution with low-regularity}\label{sec:low_reg}
In this subsection, we study the convergence of the four variants for solution with low-regularity ($\mb w,\mb u\in H^{s}(\mc T_h)^3$ with $s\in(1/2,1]$). To compare, we also include the standard HDG method \cite{NgPeCo2:2011}, namely,
 $W\times V\times Q\times N\times M=\mc P_{k}^3\times\mc P_{k}^3\times\mc P_{k}\times\mc P_{k}^t\times\mc P_{k}$. For the four variants, we choose their projections as defined in \eqref{eq:vb_ns}, \eqref{eq:vh_ns}, \eqref{eq:vb+_ns},  \eqref{eq:vh+_ns}. For the standard HDG method, we simply use $L^2$ projections:
\begin{subequations}
\begin{align}
\Pi_K(\mb w,\mb u,p)&:=(\Pi_k\mb w,\Pi_{k}\mb u,\Pi_k p),\\
\Pi_K^*(\mb w^*,\mb u^*,p^*)&:=(\Pi_k\mb w^*,\Pi_k\mb u^*,\Pi_k p^*).
\end{align}
\end{subequations}
Apparently these projections satisfy Assumption \ref{asmp:proj}. 
We let the stabilization functions to be $\tau_t=h_K^\beta$ and $\tau_n=h_K^\alpha$ so that their effect on the convergence can be better observed. We remark that all estimates in this subsection are for general polyhedral meshes. For more specific types of meshes (tetrahedral meshes for instance), it is possible to construct more tailored projections, instead of $L^2$ projections, to obtain sharper estimates. This will be the aim of future work.

The following theorem holds for all the four variants and the standard HDG method.

\begin{theorem}\label{thm:low_reg}
We have
\begin{align}\label{eq:conv_lowreg}
\|\mb w_h-\mb w\|_{\mc T_h}\le C T_1,\quad
\|\mb u_h-\mb u\|_{\mc T_h}\le C T_2T_1+h^{s_u}|\mb u|_{s_u,\mc T_h},
\end{align}
where
\begin{align*}
T_1&:=h^{s_w-1/2}\max\{h^{1/2},h^{-\beta/2}\}|\mb w|_{s_w,\mc T_h}
+h^{s_u-1/2}\max\{h^{\beta/2},h^{-\alpha/2}\}|\mb u|_{s_u,\mc T_h}\\
&\quad\ +h^{1/2+\alpha/2}|p|_{1,\mc T_h},\\
T_2&:=h^{r_2-1/2}\max\{h^{\beta/2},h^{-\alpha/2}\}
+h^{\alpha/2+1/2}+h^{r_1-1/2}\max\{h^{1/2},h^{-\beta/2}\}.
\end{align*}
Here, $s_w,s_u\in(1/2,1]$, the index $r_1,r_2$ appear in \eqref{eq:ell_reg}, and $C$ is independent of $h$.
\end{theorem}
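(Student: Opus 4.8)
The plan is to prove the two bounds in \eqref{eq:conv_lowreg} separately, reusing the two abstract identities of Section~\ref{sec:general_set_estimate}: the energy identity \eqref{eq:ene_id} controls $\|\mb w-\mb w_h\|_{\mc T_h}$ together with the weighted trace seminorms, producing the factor $T_1$; the duality identity \eqref{eq:duality_id} with $\bs\theta=\bs\varepsilon_h^u$ then controls $\|\mb u-\mb u_h\|_{\mc T_h}$, producing $T_1T_2$ plus the bare projection error $h^{s_u}|\mb u|_{s_u,\mc T_h}$. Throughout, every projection entering $\Pi_K$ and $\Pi_K^*$ is an $L^2$ projection (or the curl+ projection $\Pi_k^c$ for the first component of $\mathfrak{B+},\mathfrak{H+}$), so the only approximation inputs are \eqref{eq:conv_L2} and \eqref{eq:curl+_conv}; this is what makes the argument uniform over the four variants and the standard method. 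The whole proof amounts to inserting $\tau_t=h_K^\beta$, $\tau_n=h_K^\alpha$ into the generic estimates and bookkeeping the resulting powers of $h$.

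First I would treat $\mb w_h$. Starting from \eqref{eq:ene_id}, I apply Cauchy--Schwarz and Young's inequality to the three right-hand terms, absorbing $\|\bs\varepsilon_h^w\|_{\mc T_h}$ and the two $\tau$-weighted trace seminorms into the left, to reach
\[
\|\bs\varepsilon_h^w\|_{\mc T_h}^2+\|\tau_t^{1/2}(\mr P_N\bs\varepsilon_h^u-\widehat{\bs\varepsilon}_h^u)\|_{\pp\mc T_h}^2+\|\tau_n^{1/2}(\varepsilon_h^p-\widehat{\varepsilon}_h^p)\|_{\pp\mc T_h}^2\lesssim \|\Pi\mb w-\mb w\|_{\mc T_h}^2+\|\tau_t^{-1/2}\bs\delta_{\tau_t}^\Pi\|_{\pp\mc T_h}^2+\|\tau_n^{-1/2}\delta_{\tau_n}^\Pi\|_{\pp\mc T_h}^2.
\]
Then, working element by element from \eqref{eq:def_bdrm_1}--\eqref{eq:def_bdrm_2} and using that $\mr P_N,\mr P_M$ are $L^2$ projections, I split each boundary remainder into a ``$\mb w$-trace'' piece weighted by $\tau_t^{-1/2}=h_K^{-\beta/2}$ and a ``$\mb u$-trace'' piece weighted by $\tau_t^{1/2}=h_K^{\beta/2}$ (respectively the $h_K^{\mp\alpha/2}$ weights for the grad-div remainder), and bound each trace by \eqref{eq:conv_L2}/\eqref{eq:curl+_conv}. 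Combining the volume term $h_K^{s_w}=h_K^{s_w-1/2}h_K^{1/2}$ with the remainder term $h_K^{s_w-1/2}h_K^{-\beta/2}$ produces the factor $h^{s_w-1/2}\max\{h^{1/2},h^{-\beta/2}\}$; the analogous pairing for $\mb u$ gives $h^{s_u-1/2}\max\{h^{\beta/2},h^{-\alpha/2}\}$, and the $p$-piece gives $h^{1/2+\alpha/2}$. Summing over $K$ yields $\|\mb w-\mb w_h\|_{\mc T_h}\lesssim T_1$ and, as a by-product, the same bound for the two weighted trace seminorms.

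For $\mb u_h$, I write $\|\mb u-\mb u_h\|_{\mc T_h}\le\|\mb u-\Pi\mb u\|_{\mc T_h}+\|\bs\varepsilon_h^u\|_{\mc T_h}$, where the first term is $\lesssim h^{s_u}|\mb u|_{s_u,\mc T_h}$ by \eqref{eq:conv_L2}. To estimate $\|\bs\varepsilon_h^u\|_{\mc T_h}$ I set $\bs\theta=\bs\varepsilon_h^u$ in \eqref{eq:duality_id}, isolate $\|\bs\varepsilon_h^u\|_{\mc T_h}^2$, and bound each of the six remaining terms as a product of a ``dual factor'' and a ``primal factor''. Every primal factor is one of $\|\Pi\mb w-\mb w\|_{\mc T_h}$, $\|\tau_t^{-1/2}\bs\delta_{\tau_t}^\Pi\|_{\pp\mc T_h}$, $\|\tau_n^{-1/2}\delta_{\tau_n}^\Pi\|_{\pp\mc T_h}$, $\|\bs\varepsilon_h^w\|_{\mc T_h}$, or one of the two weighted trace seminorms, all $\lesssim T_1$ by the first stage. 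Every dual factor is a projection error or boundary remainder of $(\mb w^*,\mb u^*,p^*)$; estimating these exactly as before (now with the dual indices $r_1,r_2,r_3$) and invoking \eqref{eq:ell_reg}, which gives $\|\mb w^*\|_{r_1,\Omega}+\|\mb u^*\|_{r_2,\Omega}+\|p^*\|_{r_3,\Omega}\lesssim\|\bs\varepsilon_h^u\|_{\mc T_h}$, produces the three groups of powers in $T_2$ (the $h^{1/2}$ branch of the last term coming from $\|\Pi^*\mb w^*-\mb w^*\|_{\mc T_h}$ and the $h^{-\beta/2}$ branch from $\bs\delta_{-\tau_t}^{\Pi^*}$). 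Here I use the $\mc P_0$-orthogonality \eqref{eq:curl+_0order} (and, for $\mathfrak{B},\mathfrak{H}$ and standard HDG, the full $L^2$ orthogonality) to replace $\Pi^*\mb w^*$ by $\Pi^*\mb w^*-\Pi_0\mb w^*$ in the term $(\Pi\mb w-\mb w,\Pi^*\mb w^*)_{\mc T_h}$, gaining the extra power of $h$ that makes that term consistent with the others. Collecting, $\|\bs\varepsilon_h^u\|_{\mc T_h}^2\lesssim T_1T_2\,\|\bs\varepsilon_h^u\|_{\mc T_h}$, so $\|\bs\varepsilon_h^u\|_{\mc T_h}\lesssim T_1T_2$, and the triangle inequality finishes the second bound.

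The main obstacle is organizational rather than conceptual: I must carry the signed exponents $\alpha,\beta$ through every trace term and resolve each competition between a $+\tfrac\beta2$ (or $+\tfrac\alpha2$) weight and a $-\tfrac\beta2$ (or $-\tfrac\alpha2$) weight into the $\max\{\cdot,\cdot\}$ factors, doing so simultaneously for all five methods, whose only differences (the degree of the $p$-projection and whether the $\mb w$-projection is $\Pi_k$ or $\Pi_k^c$) must be checked not to affect the leading low-regularity orders. A secondary technical point is the passage from element-local to global bounds: since some net exponents may be negative, the step $h_K\le h$ has to be justified (e.g.\ under quasi-uniformity, or by retaining the maximum of the local coefficients), and I would state the summation convention explicitly to keep the estimate honest.
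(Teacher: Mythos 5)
Your proposal is correct and follows essentially the same route as the paper: the energy identity plus elementwise $L^2$/curl+ projection bounds on the two boundary remainders with the weights $h_K^{\pm\beta/2}$, $h_K^{\pm\alpha/2}$ yields $T_1$, and the duality identity with $\bs\theta=\bs\varepsilon_h^u$ (including the $\mc P_0$-orthogonality trick for the $(\Pi\mb w-\mb w,\Pi^*\mb w^*)_{\mc T_h}$ term) yields $T_1T_2$. Your closing remark about justifying $h_K\le h$ when net exponents are negative is a fair point that the paper leaves implicit.
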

\begin{proof}
First note that we have only used the $L^2$ projection and the projection defined by \eqref{eq:curl+_proj} for the five methods (the four variants and the standard HDG method).
By \eqref{eq:conv_L2}, \eqref{eq:curl+_conv}, \eqref{eq:def_bdrm_1} and \eqref{eq:def_bdrm_2}, we have
\begin{align}
\|\tau_n^{-1/2}\delta_{\tau_n}^{\Pi_K}\|_{\pp K}
\lesssim h_K^{s_u-1/2-\alpha/2}|\mb u|_{s_u,K}
+h_K^{s_p-1/2+\alpha/2}|p|_{s_p,K}\\
\|\tau_t^{-1/2}\bs\delta_{\tau_t}^{\Pi_K}\|_{\pp K}
\lesssim h_K^{s_w-1/2-\beta/2}|\mb w|_{s_w,K}
+h_K^{s_u-1/2+\beta/2}|\mb u|_{s_u,K},
\end{align}
where $s_w,s_u,s_p\in(1/2,1]$.
Now by \eqref{eq:ene_id} we have
\begin{align*}
\|\mb w_h-\mb w\|_{\mc T_h}\lesssim \|\bs\Pi\mb w-\mb w\|_{\mc T_h}+\|\tau_n^{-1/2}\delta_{\tau_n}^\Pi\|_{\pp\mc T_h}+\|\tau_t^{-1/2}\bs\delta_{\tau_t}^\Pi\|_{\pp\mc T_h},
\end{align*}
which gives the estimate for $\|\mb w_h-\mb w\|_{\mc T_h}$.

On the other hand, by \eqref{eq:duality_id} we obtain an inequality similar to \eqref{eq:prf_7} (replacing all $\mc T_h^p$ by $\mc T_h$), from which, we have
\begin{align*}
\|\bs\varepsilon_h^u\|_{\mc T_h}^2&\lesssim T_1\bigg(h^{r_1-1/2}\max\{h^{1/2},h^{-\beta/2}\}|\mb w^*|_{r_1,\Omega}
+h^{\alpha/2+1/2}|p^*|_{1,\Omega}\\
&\quad +h^{r_2-1/2}\max\{h^{\beta/2},h^{-\alpha/2}\}|\mb u^*|_{r_2,\Omega}
\bigg)\lesssim T_1T_2\|\bs\varepsilon_h^u\|_{\mc T_h},
\end{align*}
where \eqref{eq:ell_reg} is used for the last inequality sign. This completes the proof.
\end{proof}

Suppose $\mb f$ is divergence-free, then $p=0$. 
If we choose $\alpha=\beta=0$, then \eqref{eq:conv_lowreg} gives
\begin{subequations}\label{eq:abeq1_conv}
\begin{align}
\|\mb w_h-\mb w\|_{\mc T_h} 
&\lesssim h^{s_w-1/2}|\mb w|_{s_w,\mc T_h}
+h^{s_u-1/2}|\mb u|_{s_u,\mc T_h},\\
\|\mb u_h-\mb u\|_{\mc T_h}&
\lesssim \max\{h^{r_2-1/2},h^{r_1-1/2},h^{1/2}\}\left(h^{s_w-1/2}|\mb w|_{s_w,\mc T_h}
+h^{s_u-1/2}|\mb u|_{s_u,\mc T_h}\right)\\
\nonumber
&\quad +h^{s_u}|\mb u|_{s_u,\mc T_h}.
\end{align}
\end{subequations}
In this case, $\mb w_h$ and $\mb u_h$ converge but not necessary optimally. In addition, $\mb u_h$ converges faster than $\mb w_h$ with an additional order, which depends on the values of $r_1$ and $r_2$ (see \eqref{asmp:ell_reg}). For instance, if $r_1,r_2\ge1$, then $\mb u_h$ converges optimally. On the other hand, if we choose $\alpha=-1$, then
\begin{align}\label{eq:am1_conv}
\|\mb w_h-\mb w\|_{\mc T_h}\lesssim h^{s_w-1/2}\max\{h^{1/2},h^{-\beta/2}\}|\mb w|_{s_w,\mc T_h}
+h^{s_u-1/2}\max\{h^{\beta/2},h^{1/2}\}|\mb u|_{s_u,\mc T_h}.
\end{align}
Therefore, depending on the a priori information about $\mb w$ and $\mb u$, we can adjust $\tau_t=h^\beta$ to achieve a better convergence rate. For instance, if $\mb w=0$, then we can choose $\beta=1$ and then $\|\mb w_h-\mb w\|_{\mc T_h}\lesssim h^{s_u}$.

\section{Numerical tests}\label{sec:num_exp}
\subsection{Solution with high-regularity}
In this subsection, we provide some numerical experiments for variant $\mathfrak{H+}$ and variant $\mathfrak{B}$ for smooth exact solution. Note that the corresponding experiments for variants $\mathfrak{H}$ and $\mathfrak{B+}$ have appeared in \cite{ChCuXu:2019,ChQiShSo:2017}. We consider a cubic domain $\Omega=[0,1]^3$ uniformly discretized by tetrahedral elements and choose the exact solutions as the following:
\begin{align*}
\mb u(x,y,z)&=(\sin(\pi x)\sin(\pi y)\sin(\pi z),\cos(\pi x)\cos(\pi y)\sin(\pi z),x^5+y^5),\\
p(x,y,z)&=\sin(\pi x)\sin(\pi y)\sin(\pi z),
\end{align*}
where $\mb w$ and the data $\mb f,\mb g$ are chosen such that \eqref{eq:PDE} are satisfied.

{\bf Tests for variant $\mathfrak{H+}$.}
We conduct three error tests (denoted by A,B and C). For Test A, we choose $\tau_t\big|_{\pp K}=h_K^{-1}$ and $\tau_n\big|_{\pp K}=h_K$. For Test B, we choose the same value of $\tau_t$ as Test A, but we set $\tau_n$ on one face of $K$ to be $\frac{10^5}{h_K^2}$ and the rest to be $0$. Note that both the choices of the stabilization functions for Test A and B satisfy the requirement of variant $\mathfrak{H+}$ (see Table \ref{tb:var_sumry}). We finally consider Test C, where we choose $\tau_t\big|_{\pp K}=h_K^{-1}$ and $\tau_n\big|_{\pp K}=\frac{10^5}{h_K^2}$. This choice of $\tau_n$ violates the requirement of variant $\mathfrak{H+}$ (see Table \ref{tb:var_sumry}). 

From Table \ref{tb:test_1} and \ref{tb:test_2}, we observe that both $\mb w_h$ and $\mb u_h$ converge at optimal order for Test A and B. We also observe that the discrete solutions in Test B converge slightly faster than those in Test A. This is consistent with our analysis (we remark that the choice of the stabilization functions of Test B minimizes the HDG projection errors (see \eqref{eq:conv_hdg}) compared to Test A). From Table \ref{tb:test_3}, we observe that the discrete solutions in Test C lose the optimal convergence rate. This to some degree supports the sharpness of our estimates .

\begin{table}[ht]
\centering
\begin{tabular}{|c|c|c|c|c|c|}
\hline
k & h & \multicolumn{2}{c|}{$\|\mb w_h-\mb w\|_{\mc T_h}$} & \multicolumn{2}{c|}{$\|\mb u_h-\mb u\|_{\mc T_h}$} \\
\hline
  && Error & Order                & Error &  Order\\
\hline
 1 & 1.41e+00  &  1.76e+00  &  -  &   2.21e+00  &  -\\
   & 7.07e-01  &  5.38e-01  &  1.71  &   3.42e-01  &  2.69\\
   & 3.54e-01  &  1.36e-01  &  1.98  &   4.49e-02  &  2.93\\
   & 1.77e-01  &  3.49e-02  &  1.96  &   5.88e-03  &  2.93\\
\hline
 2 & 1.41e+00  &  7.30e-01  &  -  &   1.11e+00  &  -\\
   & 7.07e-01  &  1.10e-01  &  2.73  &   8.49e-02  &  3.71\\
   & 3.54e-01  &  1.50e-02  &  2.87  &   5.26e-03  &  4.01\\
   & 1.77e-01  &  1.96e-03  &  2.94  &   3.06e-04  &  4.10\\
\hline
 3 & 1.41e+00  &  2.50e-01  &  -  &   3.91e-01  &  -\\ 
   & 7.07e-01  &  2.27e-02  &  3.46  &   1.82e-02  &  4.43\\
   & 3.54e-01  &  1.62e-03  &  3.81  &   5.95e-04  &  4.93\\
   & 1.77e-01  &  1.07e-04  &  3.92  &   1.92e-05  &  4.95\\
\hline
\end{tabular}
\caption{Test A: $\tau_t\big|_{\pp K}=h_K^{-1}$, $\tau_n\big|_{\pp K}=h_K$.}
\label{tb:test_1}
\end{table}

\begin{table}[ht]
\centering
\begin{tabular}{|c|c|c|c|c|c|}
\hline
k & h & \multicolumn{2}{c|}{$\|\mb w_h-\mb w\|_{\mc T_h}$} & \multicolumn{2}{c|}{$\|\mb u_h-\mb u\|_{\mc T_h}$} \\
\hline
  && Error & Order                & Error &  Order\\
\hline
1& 1.41e+00  &  1.77e+00  &  -  &   2.02e+00  &  -\\
 & 7.07e-01  &  5.37e-01  &  1.72  &   2.85e-01  &  2.83\\
 & 3.54e-01  &  1.36e-01  &  1.99  &   3.67e-02  &  2.95\\
 & 1.77e-01  &  3.47e-02  &  1.96  &   4.91e-03  &  2.90\\
\hline
2& 1.41e+00  &  7.43e-01  &  -  &   9.80e-01  &  -\\
 & 7.07e-01  &  1.09e-01  &  2.76  &   6.72e-02  &  3.87\\
 & 3.54e-01  &  1.48e-02  &  2.89  &   4.01e-03  &  4.07\\
 & 1.77e-01  &  1.92e-03  &  2.95  &   2.22e-04  &  4.17\\
\hline
3& 1.41e+00  &  2.53e-01  &  -  &   3.44e-01  &  -\\
 & 7.07e-01  &  2.26e-02  &  3.49  &   1.43e-02  &  4.58\\
 & 3.54e-01  &  1.59e-03  &  3.83  &   4.43e-04  &  5.02\\
 & 1.77e-01  &  1.04e-04  &  3.93  &   1.42e-05  &  4.96\\
\hline
\end{tabular}
\caption{Test B: $\tau_t\big|_{\pp K}=h_K^{-1}$, $(\tau_n\big|_{\pp K})^\mr{max}=\frac{10^5}{h_K^2}$ and $(\tau_n\big|_{\pp K})^\mr{sec}=0$. Recall that we denote by  $(\tau_n\big|_{\pp K})^\mr{max}$ and $(\tau_n\big|_{\pp K})^\mr{sec}$ the largest and the second largest values of $\tau_n$ on $\pp K$ respectively.}
\label{tb:test_2}
\end{table}

\begin{table}[ht]
\centering
\begin{tabular}{|c|c|c|c|c|c|}
\hline
k & h & \multicolumn{2}{c|}{$\|\mb w_h-\mb w\|_{\mc T_h}$} & \multicolumn{2}{c|}{$\|\mb u_h-\mb u\|_{\mc T_h}$} \\
\hline
  && Error & Order                & Error &  Order\\
\hline
 1 & 1.41e+00  &  1.76e+00  &  -  &   2.10e+04  &   -\\
   & 7.07e-01  &  5.42e-01  &  1.70  &   4.07e+04  &  -0.96\\
   & 3.54e-01  &  1.38e-01  &  1.98  &   1.58e+04  &   1.37\\
   & 1.77e-01  &  3.51e-02  &  1.97  &   4.31e+03  &   1.88\\
\hline
 2 & 1.41e+00  &  7.18e-01  &  -  &   8.71e+03  &  -\\
   & 7.07e-01  &  1.11e-01  &  2.69  &   2.85e+03  &  1.61\\
   & 3.54e-01  &  1.52e-02  &  2.88  &   5.13e+02  &  2.48\\
\hline
 3 & 1.41e+00  &  2.51e-01  &  -  &   1.25e+03  &  -\\
   & 7.07e-01  &  2.29e-02  &  3.46  &   3.01e+02  &  2.06\\
   & 3.54e-01  &  1.64e-03  &  3.80  &   1.98e+01  &  3.93\\
\hline
\end{tabular}
\caption{Test C: $\tau_t\big|_{\pp K}=h_K^{-1}$, $\tau_n\big|_{\pp K}=\frac{10^5}{h_K^2}$.}
\label{tb:test_3}
\end{table}

{\bf Tests for variant $\mathfrak{B}$.}
We test two cases for variant $\mathfrak{B}$ (denoted by Test D and E). For Test D, we choose $\tau_t\big|_{\pp K}=h_K^{-1}$ and $\tau_n\big|_{\pp K}=h_K$. For Test E, we choose $\tau_t\big|_{\pp K}=h_K^{-1}$ and $\tau_n\big|_{\pp K}=0$. Both cases satisfy the requirements of the stabilization functions for variant $\mathfrak{B}$ (see Table \ref{tb:var_sumry}). From Table \ref{tb:test_4} and \ref{tb:test_5}, we observe optimal convergence rate of $\mb w_h$ and $\mb u_h$ in both tests. We also observe that the numerical solutions in Test E converge slightly faster than those in Test D. This is consistent with our analysis (notice that by \eqref{eq:conv_bdmh}, we know the choice of the stabilization functions in Test E minimizes the BDM-H projection errors compared to Test D).

\begin{table}[ht]
\centering
\begin{tabular}{|c|c|c|c|c|c|}
\hline
k & h & \multicolumn{2}{c|}{$\|\mb w_h-\mb w\|_{\mc T_h}$} & \multicolumn{2}{c|}{$\|\mb u_h-\mb u\|_{\mc T_h}$} \\
\hline
  && Error & Order                & Error &  Order\\
\hline
 0 & 1.41e+00  &  2.86e+00  &  -  &   3.30e+00  &  -\\
   & 7.07e-01  &  2.07e+00  &  0.47  &   1.35e+00  &  1.29\\
   & 3.54e-01  &  1.09e+00  &  0.92  &   3.60e-01  &  1.90\\
   & 1.77e-01  &  5.22e-01  &  1.07  &   8.92e-02  &  2.01\\
\hline
 1 & 1.41e+00  &  1.73e+00  &  -  &   2.16e+00  &  -\\
   & 7.07e-01  &  5.36e-01  &  1.69  &   3.75e-01  &  2.52\\
   & 3.54e-01  &  1.37e-01  &  1.97  &   5.01e-02  &  2.90\\
   & 1.77e-01  &  3.53e-02  &  1.96  &   6.43e-03  &  2.96\\
\hline
 2 & 1.41e+00  &  7.27e-01  &  -  &   1.14e+00  &  -\\
   & 7.07e-01  &  1.10e-01  &  2.72  &   9.27e-02  &  3.63\\
   & 3.54e-01  &  1.51e-02  &  2.87  &   6.06e-03  &  3.93\\
   & 1.77e-01  &  1.97e-03  &  2.94  &   3.73e-04  &  4.02\\
\hline
\end{tabular}
\caption{Test D: $\tau_t\big|_{\pp K}=h_K^{-1}$, $\tau_n\big|_{\pp K}=h_K$.}
\label{tb:test_4}
\end{table}

\begin{table}[ht]
\centering
\begin{tabular}{|c|c|c|c|c|c|}
\hline
k & h & \multicolumn{2}{c|}{$\|\mb w_h-\mb w\|_{\mc T_h}$} & \multicolumn{2}{c|}{$\|\mb u_h-\mb u\|_{\mc T_h}$} \\
\hline
  && Error & Order                & Error &  Order\\
\hline
 0 & 1.41e+00  &  2.88e+00  &  -  &   2.96e+00  &  -\\
   & 7.07e-01  &  2.05e+00  &  0.49  &   1.01e+00  &  1.56\\ 
   & 3.54e-01  &  1.07e+00  &  0.94  &   2.32e-01  &  2.12\\ 
   & 1.77e-01  &  5.03e-01  &  1.09  &   5.07e-02  &  2.19\\ 
\hline
 1 & 1.41e+00  &  1.74e+00  &  -  &   1.82e+00  &  -\\
   & 7.07e-01  &  5.36e-01  &  1.70  &   2.73e-01  &  2.74\\
   & 3.54e-01  &  1.36e-01  &  1.98  &   3.52e-02  &  2.95\\
   & 1.77e-01  &  3.50e-02  &  1.96  &   4.70e-03  &  2.90\\
\hline
 2 & 1.41e+00  &  7.44e-01  &  -  &   9.26e-01  &  -\\
   & 7.07e-01  &  1.09e-01  &  2.77  &   6.47e-02  &  3.84\\
   & 3.54e-01  &  1.47e-02  &  2.90  &   3.87e-03  &  4.06\\
   & 1.77e-01  &  1.89e-03  &  2.96  &   2.15e-04  &  4.17\\
\hline
\end{tabular}
\caption{Test E: $\tau_t\big|_{\pp K}=h_K^{-1}$, $\tau_n\big|_{\pp K}=0$.}
\label{tb:test_5}
\end{table}

\subsection{Solution with low-regularity}\label{sec:num_lowreg} In this subsection, we consider an L-shape domain 
\begin{align*}
\Omega=([-1,1]^2\backslash [-1,0]^2)\times[0,1],
\end{align*}
uniformly discretized by tetrahedral elements. We use the following exact solution:
\begin{align*}
\mb u(x,y,z)=(\partial_x (r^{2/3}\sin(\frac{2}{3}\theta)),\partial_y(r^{2/3}\sin(\frac{2}{3}\theta)),0),\quad
p(x,y,z)\equiv 0,
\end{align*}
where $\mb w$ and the data $\mb f,\mb g$ are chosen such that \eqref{eq:PDE} are satisfied. Note that $\mb u\in H^{2/3}(\Omega)$ and $\mb w=\nabla\times\mb u=0$. Similar experiment settings have appeared in \cite{ChQiSh:2018,HoPeSc:2004}.

We test the convergence of $\mb w_h$ and $\mb u_h$ in $L^2$ norms for the standard HDG method (Table \ref{tb:low_reg_sd}), the variant $\mathfrak{B}$ (Table \ref{tb:low_reg_vb}) and the variant $\mathfrak{H}+$ (Table \ref{tb:low_reg_vh}). For the first set of tests, we choose $\tau_t=\tau_n=1$ ($\alpha=\beta=0$). By \eqref{eq:abeq1_conv}, we expect $\mb w_h$ to converge at least in $\mc O(h^{2/3-1/2})$ and $\mb u_h$ to converge faster with an additional order. From Table \ref{tb:low_reg_sd}, \ref{tb:low_reg_vb} and \ref{tb:low_reg_vh}, we observe that $\mb u_h$ converges faster than $\mb w_h$, which is consistent with the analysis. We also observe that $\mb w_h$ converges faster than the estimate $\mc O(h^{2/3-1/2})$ and $\mb u_h$ converges almost in optimal order. 
For the second set of tests, we modify the stabilization functions to $\tau_t=h_K$ and $\tau_n=\frac{1}{h_K}$. 
Since $\mb w=0$, by \eqref{eq:am1_conv}, $\mb w_h$ should converge at least in order $\mc O(h^{2/3})$. From Table \ref{tb:low_reg_sd}, \ref{tb:low_reg_vb} and \ref{tb:low_reg_vh}, we observe that $\mb w_h$ converges in about $\mc O(h)$, which is faster than the first set of tests when $\alpha=\beta=0$. This agrees with our analysis.

\begin{table}[ht]
\centering
\begin{tabular}{|c|c|c|c|c|c|}
\hline
$\tau$ & h & \multicolumn{2}{c|}{$\|\mb w_h-\mb w\|_{\mc T_h}$} & \multicolumn{2}{c|}{$\|\mb u_h-\mb u\|_{\mc T_h}$} \\
\hline
  && Error & Order                & Error &  Order\\
\hline
$\tau_t=1,\ \tau_n=1$&1.41e+00  &  1.84e-01  &  -  &   2.82e-01  &  -\\
  &  7.07e-01 &   1.60e-01  &  0.20  &   1.91e-01  &  0.56\\ 
  &  3.54e-01 &   1.31e-01  &  0.29  &   1.25e-01  &  0.61\\ 
  &  1.77e-01 &   1.02e-01  &  0.36  &   8.10e-02  &  0.63\\ 
  &  8.84e-02 &   7.85e-02  &  0.38  &   5.22e-02  &  0.63\\
\hline
$\tau_t=h_K,\ \tau_n=\frac{1}{h_K}$&1.41e+00  &  2.24e-01  &  -  &   2.83e-01  &  -\\
&  7.07e-01  &  1.25e-01  &  0.85   &  1.89e-01  &  0.58\\
  &  3.54e-01  &  5.38e-02  &  1.21   &  1.23e-01  &  0.62\\ 
  &  1.77e-01  &  2.14e-02  &  1.33   &  7.91e-02  &  0.64\\ 
  &  8.84e-02  &  1.00e-02  &  1.10   &  5.04e-02  &  0.65\\
\hline
\end{tabular}
\caption{Convergence of the standard HDG ($k=0$) for solution with low-regularity.}
\label{tb:low_reg_sd}
\end{table}

\begin{table}[ht]
\centering
\begin{tabular}{|c|c|c|c|c|c|}
\hline
$\tau$ & h & \multicolumn{2}{c|}{$\|\mb w_h-\mb w\|_{\mc T_h}$} & \multicolumn{2}{c|}{$\|\mb u_h-\mb u\|_{\mc T_h}$} \\
\hline
  && Error & Order                & Error &  Order\\
\hline
$\tau_t=1,\ \tau_n=1$ & 1.41e+00  &  6.07e-02  &  -  &   1.61e-01  &  -\\
  &  7.07e-01  &  5.03e-02  &  0.27  &   1.03e-01  &  0.63\\ 
  &  3.54e-01  &  3.63e-02  &  0.47  &   6.62e-02  &  0.64\\ 
  &  1.77e-01  &  2.55e-02  &  0.51  &   4.20e-02  &  0.65\\
\hline
$\tau_t=h_K,\ \tau_n=\frac{1}{h_K}$ & 1.41e+00  &  7.60e-02  &  -  &   1.61e-01  &  -\\
  &  7.07e-01  &  3.79e-02  &  1.01  &   1.03e-01  &  0.64\\
  &  3.54e-01  &  1.52e-02  &  1.32  &   6.61e-02  &  0.64\\ 
  &  1.77e-01  &  8.05e-03  &  0.92  &   4.20e-02  &  0.65\\
\hline
\end{tabular}
\caption{Convergence of variant $\mathfrak{B}$ ($k=0$) for solution with low-regularity.}
\label{tb:low_reg_vb}
\end{table}

\begin{table}[ht]
\centering
\begin{tabular}{|c|c|c|c|c|c|}
\hline
$\tau$ & h & \multicolumn{2}{c|}{$\|\mb w_h-\mb w\|_{\mc T_h}$} & \multicolumn{2}{c|}{$\|\mb u_h-\mb u\|_{\mc T_h}$} \\
\hline
  && Error & Order                & Error &  Order\\
\hline
$\tau_t=1,\ \tau_n=1$&1.41e+00  &  7.79e-02  &  -   &  1.68e-01  &  -\\
  &  7.07e-01  &  6.48e-02  &  0.27  &   1.06e-01  &  0.66\\
  &  3.54e-01  &  4.66e-02  &  0.48  &   6.74e-02  &  0.66\\ 
  &  1.77e-01  &  3.28e-02  &  0.51  &   4.26e-02  &  0.66\\
\hline
$\tau_t=h_K,\ \tau_n=\frac{1}{h_K}$&1.41e+00  &  9.81e-02  &  -  &   1.69e-01  &  -\\
  &  7.07e-01  &  4.87e-02  &  1.01  &   1.06e-01  &  0.68\\
  &  3.54e-01  &  1.91e-02  &  1.35  &   6.74e-02  &  0.65\\
  &  1.77e-01  &  9.44e-03  &  1.01  &   4.27e-02  &  0.66\\
\hline
\end{tabular}
\caption{Convergence of variant $\mathfrak{H}+$ ($k=0$) for solution with low-regularity.}
\label{tb:low_reg_vh}
\end{table}

\section*{Conclusions}
We have proposed a framework that enables us to analyze different variants of HDG methods for the static Maxwell equations in one analysis. The analysis is as simple and concise as the well known projection-based error analysis of the mixed finite element and the HDG methods, while more general, thanks to the introduction of the boundary remainders.
We use the framework to analyze four variants of HDG methods. For the two known variants $\mathfrak{B+}$ and $\mathfrak{H}$, we recover the existing optimal estimates and relax the conditions on the types of meshes and stabilization functions. We also propose two new variants $\mathfrak{B}$ and $\mathfrak{H+}$ and compare these four variants. For solution with low-regularity, we give an analysis to the four variants and the standard HDG method on general polyhedral meshes, and investigate the effect of different stabilization functions. The numerical experiments are consistent our estimates.

Note that we have assumed the constant permittivity and permeability, which is apparently a simplification of the real case. The main reason for this simplification is that we have used various types of projections as tools of our analysis. However, many of these projections were constructed for elliptic diffusion and they assume the solution satisfying at least $H^1$ regularity. But for Maxwell's equations with non-constant material parameters, we can only expect the solution to belong to the space $H^s(\Omega)$ where $s$ can be even less than $\frac{1}{2}$ \cite{BoGuLu:2013}. Therefore, generalizing the analysis to the case of non-constant material parameters require first restudying these projections for functions with low-regularity. In Section \ref{sec:low_reg}, we have briefly explored the application of the unified analysis framework to the low-regularity regime. Unlike the analysis in the high-regularity regime in Section \ref{sec:sys_err_ana} where schemes-tailored projections have been used, we have mostly used $L^2$ projections in Section \ref{sec:low_reg} since their properties are more well understood for solution with low-regularity. Another benefit of using $L^2$ projections is that they allow the analysis to hold for general polyhedral meshes. However, a major disadvantage is that $L^2$ projections do not use any specific structures of the approximation spaces or the stabilization functions (compared with N\'{e}d\'{e}lec, RT, BDM, or HDG projections). Therefore, they often do not provide the sharpest estimates. Naturally, studying schemes-tailored projections that work in the low-regularity regime constitutes one aspect future work. 

From the energy identity \eqref{eq:ene_id}, we obtain that the convergence of $\mb w_h$ is controlled by the convergence of the projection $\bs\Pi\mb w$ and the convergence of the two boundary remainders, namely $\bs\delta_{\tau_t}^\Pi$ and $\delta_{\tau_n}^\Pi$. We have explored the direction of choosing suitable projections such that $\delta_{\tau_n}^{\Pi}=0$. This allows us to have a quite flexible choice of $\tau_n$, as is demonstrated in Section \ref{sec:sys_err_ana}; see also Table \ref{tb:var_sumry}. More importantly, a vanishing boundary remainder excludes the case of suboptimal convergence. This corresponds to the effort of $M$-decompositions. However, we have not much explored the direction of choosing projections such that $\bs\delta_{\tau_t}=0$ but this seems necessary when the solution has low-regularity.
When the solution has high regularity (Section \ref{sec:sys_err_ana}), it is possible to adjust the stabilization function (for instance, we have chosen $\tau_t=h_K^{-1}$) to achieve optimal convergence (variant $\mathfrak{B}$ and $\mathfrak{H}$) and even super-convergence (variant $\mathfrak{B}+$ and $\mathfrak{H}+$) by using the LS stabilization functions. However, when the solution has only low-regularity, this approach does not seem to work. To more clearly deliver this observation, we refer again to the energy identity \eqref{eq:ene_id}, from which we can control the error by a summation of terms, among which a vital one is
\begin{align*}
\tau_t^{-1/2}\bs\delta_{\tau_t}^\Pi=
\tau_t^{-1/2}(\mb n\times\bs\Pi\mb w-\mb P_N(\mb n\times\mb w))+\tau_t^{1/2}(\mb P_N\bs\Pi\mb u-\mb P_N\mb u).
\end{align*}
If $\mb u$ has higher regularity than $\mb w$ (for instance $\mb u\in H^{s+1}(\Omega)^3$ and $\mb w\in H^s(\Omega)^3$), we can increase $\tau_t$ (for instance, $\tau_t=h_K^{-1}$) and obtain an optimal estimate for $\mb w_h$. Similarly, if $\mb w$ has higher regularity, we can decrease $\tau_t$ to achieve a faster convergence (for instance, see the arguments following \eqref{eq:am1_conv} and the corresponding numerical experiments in Section \ref{sec:num_lowreg}). However, when $\mb w$ and $\mb u$ have similar regularity, say $H^s(\mc T_h)^3$, then the optimal choice of $\tau_t$ is $1$, and we can only conclude that this remainder term converges in $\mc O(h^{s-1/2})$, {\sl unless} we can prove the existence of a projection rendering $\bs\delta_{\tau_t}^\Pi=0$ or at least $\bs\delta_{\tau_t}^\Pi=\mc O(h^s)$. Thus, for the cases when $\mb w$ and $\mb u$ have similar low regularity, it seems necessary to seek suitable combinations of the space triplet $W$-$V$-$N$ such that there exists a projection $\Pi^\mr{curl}$ rendering $\bs\delta_{\tau_t}^{\Pi^\mr{curl}}$ vanishing or small enough. This constitutes another aspect of future work.

\appendix
\section{Proofs}
\subsection{Proofs in Section \ref{sec:general_set_estimate}}
We here aim to prove Proposition \ref{prop:ene_id} and Proposition \ref{prop:duality_id}. We begin by proving the following three lemmas. 

\begin{lemma}
We have
\begin{subequations}\label{eq:proj_eq}
\begin{alignat}{5}
\label{eq:proj_eq_1}
(\Pi\mb w,\mb r)_{\mc T_h}-(\Pi\mb u,\nabla\times\mb r)_{\mc T_h}-\dualpr{\mr P_N\mb u,\mb r\times\mb n}_{\pp\mc T_h}&=(\Pi\mb w-\mb w,\mb r)_{\mc T_h},\\
\label{eq:proj_eq_2}
(\nabla\times\Pi\mb w,\mb v)_{\mc T_h}+\dualpr{\tau_t\mr P_N(\Pi\mb u-\mr P_N\mb u),\mb v}_{\pp\mc T_h}&\\
\nonumber
-(\Pi p,\nabla\cdot\mb v)_{\mc T_h}+\dualpr{\mr P_Mp,\mb v\cdot\mb n}_{\pp\mc T_h}
&=(\mb f,\mb v)_{\mc T_h}+\dualpr{\bs\delta_{\tau_t}^\Pi,\mb v}_{\pp\mc T_h},\\
\label{eq:proj_eq_3}
(\nabla\cdot\Pi\mb u,q)_{\mc T_h}+\dualpr{\tau_n(\Pi p-\mr P_Mp),q}_{\pp\mc T_h}
&=\dualpr{\delta_{\tau_n}^\Pi,q}_{\pp\mc T_h},\\
\label{eq:proj_eq_4}
-\dualpr{\mb n\times\Pi\mb w+\tau_t(\Pi\mb u-\mr P_N\mb u),\bs\eta}_{\pp\mc T_h\backslash\Gamma}&=-\dualpr{\bs\delta_{\tau_t}^\Pi,\bs\eta}_{\pp\mc T_h\backslash\Gamma},\\
\label{eq:proj_eq_5}
-\dualpr{\mr P_N\mb u,\bs\eta}_\Gamma &= -\dualpr{\mb g\times\mb n,\bs\eta}_\Gamma,\\
\label{eq:proj_eq_6}
-\dualpr{\Pi\mb u\cdot\mb n+\tau_n(\Pi p-\mr P_Mp),\mu}_{\pp\mc T_h\backslash\Gamma}
&=-\dualpr{\delta_{\tau_n}^\Pi,\mu}_{\pp\mc T_h\backslash\Gamma},\\
\label{eq:proj_eq_7}
-\dualpr{\mr P_Mp,\mu}_\Gamma&=0,
\end{alignat}
\end{subequations}
for all $(\mb r,\mb v,q,\bs\eta,\mu)
\in W_h\times V_h\times Q_h\times N_h\times M_h$.
\end{lemma}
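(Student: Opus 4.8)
The plan is to read the lemma for what it is: a statement that the projected exact solution $(\Pi\mb w,\Pi\mb u,\Pi p,\mr P_N\mb u,\mr P_M p)$ satisfies the HDG scheme \eqref{eq:HDG_scheme} up to the remainder terms collected on the right-hand sides of \eqref{eq:proj_eq}. I would therefore establish the seven identities one at a time, in each case inserting the projections into the corresponding equation of \eqref{eq:HDG_scheme} and rewriting the result using what the continuous solution satisfies through \eqref{eq:PDE}. The three recurring tools are integration by parts on each $K$, the orthogonality relations \eqref{eq:rq_proj} that define $\Pi_K$, and the inclusions \eqref{eq:hdg_sp} guaranteeing that the relevant polynomial traces already lie in $N(\pp K)$ or $M(\pp K)$, so that an $\mr P_N$ or $\mr P_M$ may be inserted or removed freely by self-adjointness.

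For the three volume equations the argument is short. In \eqref{eq:proj_eq_1} I would integrate $(\mb w,\mb r)_{\mc T_h}=(\nabla\times\mb u,\mb r)_{\mc T_h}$ by parts (using $\mb w=\nabla\times\mb u$ from \eqref{eq:PDE_1}), then replace $(\mb u,\nabla\times\mb r)$ by $(\Pi\mb u,\nabla\times\mb r)$ through \eqref{eq:rq_proj_2} (legitimate since $\nabla\times\mb r\in\nabla\times W(K)$), and replace $\dualpr{\mb u,\mb r\times\mb n}$ by $\dualpr{\mr P_N\mb u,\mb r\times\mb n}$ using $\mb r\times\mb n\in N(\pp K)$ from \eqref{eq:hdg_sp_4}; what is left is exactly $(\Pi\mb w-\mb w,\mb r)$. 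Equations \eqref{eq:proj_eq_2} and \eqref{eq:proj_eq_3} are the most direct: the curl-plus-stabilization part of \eqref{eq:proj_eq_2} is the weak-commutativity identity \eqref{eq:id_wk_cm_1} summed over $\mc T_h$ (producing $\dualpr{\bs\delta_{\tau_t}^\Pi,\mb v}$ and recovering $(\nabla\times\mb w,\mb v)$), while the pressure part follows by integrating $(\nabla p,\mb v)$ by parts and invoking \eqref{eq:rq_proj_3} together with $\mb v\cdot\mb n\in M(\pp K)$ from \eqref{eq:hdg_sp_5}; adding the two and using \eqref{eq:PDE_2} gives $(\mb f,\mb v)+\dualpr{\bs\delta_{\tau_t}^\Pi,\mb v}$. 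Likewise \eqref{eq:proj_eq_3} is \eqref{eq:id_wk_cm_2} summed over $\mc T_h$, after using $\nabla\cdot\mb u=0$ from \eqref{eq:PDE_3} to drop $(\nabla\cdot\mb u,q)$.

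For the four trace equations I would lean on the definitions \eqref{eq:def_bdrm_1}--\eqref{eq:def_bdrm_2} of the boundary remainders. For \eqref{eq:proj_eq_4}, testing $\mb n\times\Pi\mb w+\tau_t(\Pi\mb u-\mr P_N\mb u)$ against $\bs\eta\in N(\pp K)$ and inserting $\mr P_N$'s freely (since $\mb n\times\Pi\mb w\in N(\pp K)$ and $\bs\eta\in N(\pp K)$) rewrites the integrand as $\bs\delta_{\tau_t}^\Pi+\mr P_N(\mb n\times\mb w)$; the residual $\dualpr{\mr P_N(\mb n\times\mb w),\bs\eta}_{\pp\mc T_h\backslash\Gamma}$ then vanishes because the tangential trace of $\mb w$ is single-valued, so the two contributions of each interior face carry opposite normals and cancel. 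Equation \eqref{eq:proj_eq_6} is entirely analogous, using \eqref{eq:def_bdrm_2} and the single-valued normal trace of $\mb u$ to kill $\dualpr{\mr P_M(\mb u\cdot\mb n),\mu}_{\pp\mc T_h\backslash\Gamma}$. For the boundary faces, \eqref{eq:proj_eq_5} follows from \eqref{eq:PDE_4} via the identity $(\mb n\times\mb u)\times\mb n=\mb u^t$ and the tangentiality $N(F)\subset\mc P_{N_0}(F)^t$, while \eqref{eq:proj_eq_7} is immediate since $p=0$ on $\Gamma$ by \eqref{eq:PDE_5} forces $\mr P_M p=0$ there.

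The routine parts are the integrations by parts and the bookkeeping of which $\mr P_N$ or $\mr P_M$ may be inserted. The one genuinely conceptual step, and the place where conformity of the continuous problem enters, is the cancellation of the interior-face terms $\dualpr{\mr P_N(\mb n\times\mb w),\bs\eta}_{\pp\mc T_h\backslash\Gamma}$ and $\dualpr{\mr P_M(\mb u\cdot\mb n),\mu}_{\pp\mc T_h\backslash\Gamma}$ in \eqref{eq:proj_eq_4} and \eqref{eq:proj_eq_6}: this requires that the exact $\mb w$ and $\mb u$ have single-valued tangential and normal traces respectively, which I would justify by observing that $\mb w\in H(\mr{curl})$ and $\mb u\in H(\mr{div})$ as a consequence of \eqref{eq:PDE}.
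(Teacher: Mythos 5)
Your proof is correct and follows essentially the same route as the paper's (which is just a terse list of which of \eqref{eq:rq_proj}, \eqref{eq:hdg_sp}, \eqref{eq:id_wk_cm} and the remainder definitions yield each line): integrate by parts, use the exact PDE, invoke the projection orthogonalities and weak-commutativity, and read \eqref{eq:proj_eq_4} and \eqref{eq:proj_eq_6} off the definitions of $\bs\delta_{\tau_t}^\Pi$ and $\delta_{\tau_n}^\Pi$. Your explicit justification of the interior-face cancellation via the single-valued traces of $\mb w\in H(\mathrm{curl},\Omega)$ and $\mb u\in H(\mathrm{div},\Omega)$ is a detail the paper leaves implicit, and it is handled correctly.
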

\begin{proof}
Equation \eqref{eq:proj_eq_1} holds as a result of \eqref{eq:rq_proj_2} and \eqref{eq:hdg_sp_4}. We obtain \eqref{eq:proj_eq_2} by using \eqref{eq:id_wk_cm_1}, \eqref{eq:rq_proj_3} and \eqref{eq:hdg_sp_5}. We obtain \eqref{eq:proj_eq_3} by using \eqref{eq:id_wk_cm_2}. Equations \eqref{eq:proj_eq_4} and \eqref{eq:proj_eq_6} hold by the definitions of the two boundary remainders \eqref{eq:def_bdrm_1} and \eqref{eq:def_bdrm_2}, and also \eqref{eq:hdg_sp_4} and \eqref{eq:hdg_sp_5}. Equations \eqref{eq:proj_eq_5} and \eqref{eq:proj_eq_7} are obviously true.
\end{proof}

\begin{lemma}
The following error equations hold
\begin{subequations}
\label{eq:err_eqns}
\begin{alignat}{5}
\label{eq:err_eqns_1}
(\bs\varepsilon_h^w,\mb r)_{\mc T_h}-(\bs\varepsilon_h^u,\nabla\times\mb r)_{\mc T_h}-\dualpr{\widehat{\bs\varepsilon}_h^u,\mb r\times\mb n}_{\pp\mc T_h}&=(\Pi\mb w-\mb w,\mb r)_{\mc T_h},\\
\label{eq:err_eqns_2}
(\nabla\times\bs\varepsilon_h^w,\mb v)_{\mc T_h}+\dualpr{\tau_t\mr P_N(\bs\varepsilon_h^u-\widehat{\bs\varepsilon}_h^u),\mb v}_{\pp\mc T_h}&\\
\nonumber
-(\varepsilon_h^p,\nabla\cdot\mb v)_{\mc T_h}+\dualpr{\widehat{\varepsilon}_h^p,\mb v\cdot\mb n}_{\pp\mc T_h}
&=\dualpr{\bs\delta_{\tau_t}^\Pi,\mb v}_{\pp\mc T_h},\\
\label{eq:err_eqns_3}
(\nabla\cdot\bs\varepsilon_h^u,q)_{\mc T_h}+\dualpr{\tau_n(\varepsilon_h^p-\widehat{\varepsilon}_h^p),q}_{\pp\mc T_h}
&=\dualpr{\delta_{\tau_n}^\Pi,q}_{\pp\mc T_h},\\
\label{eq:err_eqns_4}
-\dualpr{\mb n\times\bs\varepsilon_h^w+\tau_t(\bs\varepsilon_h^u-\widehat{\bs\varepsilon}_h^u),\bs\eta}_{\pp\mc T_h\backslash\Gamma}&=-\dualpr{\bs\delta_{\tau_t}^\Pi,\bs\eta}_{\pp\mc T_h\backslash\Gamma},\\
\label{eq:err_eqns_5}
-\dualpr{\widehat{\bs\varepsilon}_h^u,\bs\eta}_\Gamma &= 0,\\
\label{eq:err_eqns_6}
-\dualpr{\bs\varepsilon_h^u\cdot\mb n+\tau_n(\varepsilon_h^p-\widehat{\varepsilon}_h^p),\mu}_{\pp\mc T_h\backslash\Gamma}
&=-\dualpr{\delta_{\tau_n}^\Pi,\mu}_{\pp\mc T_h\backslash\Gamma},\\
\label{eq:err_eqns_7}
-\dualpr{\widehat{\varepsilon}_h^p,\mu}_\Gamma&=0,
\end{alignat}
\end{subequations}
for all $(\mb r,\mb v,q,\bs\eta,\mu)
\in W_h\times V_h\times Q_h\times N_h\times M_h$.
\end{lemma}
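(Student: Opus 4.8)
The plan is to obtain the error equations \eqref{eq:err_eqns} by subtracting the HDG scheme \eqref{eq:HDG_scheme} from the projection equations \eqref{eq:proj_eq}, matching them row by row. The preceding lemma guarantees that the projected exact solution $(\Pi\mb w,\Pi\mb u,\Pi p,\mr P_N\mb u,\mr P_Mp)$ satisfies \eqref{eq:proj_eq}, while $(\mb w_h,\mb u_h,p_h,\widehat{\mb u}_h,\widehat{p}_h)$ satisfies \eqref{eq:HDG_scheme} by definition. Because the error quantities $\bs\varepsilon_h^w,\bs\varepsilon_h^u,\varepsilon_h^p,\widehat{\bs\varepsilon}_h^u,\widehat{\varepsilon}_h^p$ are exactly the differences of these two tuples, and because every form appearing in the two systems is linear in its arguments, the subtraction reduces each row to the corresponding identity in \eqref{eq:err_eqns}. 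Concretely, I would subtract \eqref{eq:HDG_scheme_1} from \eqref{eq:proj_eq_1}, \eqref{eq:HDG_scheme_2} from \eqref{eq:proj_eq_2}, and so on through the seventh pair, testing against a common $(\mb r,\mb v,q,\bs\eta,\mu)\in W_h\times V_h\times Q_h\times N_h\times M_h$.

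For most rows the identification is immediate; the only spot calling for a brief check is the stabilization term in the second equation. There the difference $\dualpr{\tau_t\mr P_N(\Pi\mb u-\mr P_N\mb u),\mb v}_{\pp\mc T_h}-\dualpr{\tau_t\mr P_N(\mb u_h-\widehat{\mb u}_h),\mb v}_{\pp\mc T_h}$ regroups, using linearity of $\mr P_N$, as $\dualpr{\tau_t\mr P_N\big((\Pi\mb u-\mb u_h)-(\mr P_N\mb u-\widehat{\mb u}_h)\big),\mb v}_{\pp\mc T_h}=\dualpr{\tau_t\mr P_N(\bs\varepsilon_h^u-\widehat{\bs\varepsilon}_h^u),\mb v}_{\pp\mc T_h}$, which is precisely the stabilization term in \eqref{eq:err_eqns_2}. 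The analogous regrouping of the $\tau_n$ stabilization term handles the third and sixth equations.

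Two bookkeeping observations close the argument and indicate where each right-hand side comes from. First, the volume data $(\mb f,\mb v)_{\mc T_h}$ in \eqref{eq:proj_eq_2} and \eqref{eq:HDG_scheme_2}, and the boundary data $\dualpr{\mb g\times\mb n,\bs\eta}_\Gamma$ in \eqref{eq:proj_eq_5} and \eqref{eq:HDG_scheme_5}, are identical in the two systems and therefore cancel; this is why \eqref{eq:err_eqns_2} and \eqref{eq:err_eqns_5} carry no data. Second, the boundary-remainder pairings $\dualpr{\bs\delta_{\tau_t}^\Pi,\cdot}$ and $\dualpr{\delta_{\tau_n}^\Pi,\cdot}$ appear only on the projection side, so they survive the subtraction and become the right-hand sides of \eqref{eq:err_eqns_2}, \eqref{eq:err_eqns_3}, \eqref{eq:err_eqns_4} and \eqref{eq:err_eqns_6}. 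I do not expect any genuine obstacle here: once the projection equations of the previous lemma are available, the result is a purely mechanical, linearity-driven subtraction, with the only care being to track which data terms cancel and which remainder terms persist.
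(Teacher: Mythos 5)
Your proposal is correct and is exactly the paper's argument: the paper's proof consists of the single sentence that the error equations follow by taking the difference between \eqref{eq:proj_eq} and \eqref{eq:HDG_scheme}. Your additional bookkeeping (the regrouping of the stabilization terms via linearity of $\mr P_N$, the cancellation of the data terms, and the persistence of the boundary-remainder pairings) simply fills in the mechanical details the paper leaves implicit.
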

\begin{proof}
We obtain the error equations by taking the difference between equations \eqref{eq:proj_eq} and equations \eqref{eq:HDG_scheme}.
\end{proof}

\begin{lemma}
We have
\begin{subequations}\label{eq:dpr_eq}
\begin{alignat}{5}
\label{eq:dpr_eq_1}
(\Pi^*\mb w^*,\mb r)_{\mc T_h}+(\Pi^*\mb u^*,\nabla\times\mb r)_{\mc T_h}+\dualpr{\mr P_N\mb u^*,\mb r\times\mb n}_{\pp\mc T_h}&=(\Pi^*\mb w^*-\mb w^*,\mb r)_{\mc T_h},\\
\label{eq:dpr_eq_2}
-(\nabla\times\Pi^*\mb w^*,\mb v)_{\mc T_h}+\dualpr{\tau_t\mr P_N(\Pi^*\mb u^*-\mr P_N\mb u^*),\mb v}_{\pp\mc T_h}&\\
\nonumber
+(\Pi^* p^*,\nabla\cdot\mb v)_{\mc T_h}-\dualpr{\mr P_Mp^*,\mb v\cdot\mb n}_{\pp\mc T_h}
&=(\bs\theta,\mb v)_{\mc T_h}-\dualpr{\bs\delta_{-\tau_t}^{\Pi^*},\mb v}_{\pp\mc T_h},\\
\label{eq:dpr_eq_3}
-(\nabla\cdot\Pi^*\mb u^*,q)_{\mc T_h}+\dualpr{\tau_n(\Pi^* p^*-\mr P_Mp^*),q}_{\pp\mc T_h}
&=-\dualpr{\delta_{-\tau_n}^{\Pi^*},q}_{\pp\mc T_h},\\
\label{eq:dpr_eq_4}
\dualpr{\mb n\times\Pi^*\mb w^*-\tau_t(\Pi^*\mb u^*-\mr P_N\mb u^*),\bs\eta}_{\pp\mc T_h\backslash\Gamma}&=\dualpr{\bs\delta_{-\tau_t}^{\Pi^*},\bs\eta}_{\pp\mc T_h\backslash\Gamma},\\
\label{eq:dpr_eq_5}
\dualpr{\mr P_N\mb u^*,\bs\eta}_\Gamma &= 0,\\
\label{eq:dpr_eq_6}
\dualpr{\Pi\mb u^*\cdot\mb n-\tau_n(\Pi^* p^*-\mr P_Mp^*),\mu}_{\pp\mc T_h\backslash\Gamma}
&=\dualpr{\delta_{-\tau_n}^{\Pi^*},\mu}_{\pp\mc T_h\backslash\Gamma},\\
\label{eq:dpr_eq_7}
\dualpr{\mr P_Mp,\mu}_\Gamma&=0,
\end{alignat}
\end{subequations}
for all $(\mb r,\mb v,q,\bs\eta,\mu)
\in W_h\times V_h\times Q_h\times N_h\times M_h$.
\end{lemma}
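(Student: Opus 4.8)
The plan is to derive each of the seven identities in \eqref{eq:dpr_eq} in exactly the same manner as the primal projection equations \eqref{eq:proj_eq} were obtained, but starting from the dual system \eqref{eq:dual_eq} instead of \eqref{eq:PDE} and using the negatively stabilized boundary remainders $\bs\delta_{-\tau_t}^{\Pi^*}$, $\delta_{-\tau_n}^{\Pi^*}$. The observation that drives the whole argument is that $\Pi_K^*$ also satisfies Assumption \ref{asmp:proj}, so the weak-commutativity identities \eqref{eq:id_wk_cm} are available for $\Pi^*$ with the lower sign; every sign discrepancy between \eqref{eq:dpr_eq} and \eqref{eq:proj_eq} can then be traced back to the sign changes built into \eqref{eq:dual_eq}, namely $\mb w^*=-\nabla\times\mb u^*$, $-\nabla\times\mb w^*-\nabla p^*=\bs\theta$, and $-\nabla\cdot\mb u^*=0$.

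First I would treat \eqref{eq:dpr_eq_1}: using \eqref{eq:rq_proj_2} for $\Pi^*$ gives $(\Pi^*\mb u^*,\nabla\times\mb r)_{\mc T_h}=(\mb u^*,\nabla\times\mb r)_{\mc T_h}$ for $\mb r\in W_h$, and since $\mb r\times\mb n\in N(\pp K)$ by \eqref{eq:hdg_sp_4} the projection $\mr P_N$ can be removed against it; integrating by parts and inserting $\mb w^*=-\nabla\times\mb u^*$ then yields the claim. For \eqref{eq:dpr_eq_2} I would invoke \eqref{eq:id_wk_cm_1} with the lower sign to rewrite $(\nabla\times\Pi^*\mb w^*,\mb v)_{\mc T_h}$, at which point the $\tau_t$-traces cancel and the remainder $\bs\delta_{-\tau_t}^{\Pi^*}$ emerges; the $p^*$-terms are handled by \eqref{eq:rq_proj_3} and \eqref{eq:hdg_sp_5} together with integration by parts, and the dual momentum equation $-\nabla\times\mb w^*-\nabla p^*=\bs\theta$ produces the right-hand side $(\bs\theta,\mb v)_{\mc T_h}$. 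Identity \eqref{eq:dpr_eq_3} follows in the same way from \eqref{eq:id_wk_cm_2} with the lower sign together with $\nabla\cdot\mb u^*=0$.

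The remaining four are boundary statements. For \eqref{eq:dpr_eq_4} and \eqref{eq:dpr_eq_6} I would expand the definitions \eqref{eq:def_bdrm_1}, \eqref{eq:def_bdrm_2} (again with the lower sign), use \eqref{eq:hdg_sp_4} and \eqref{eq:hdg_sp_5} to drop $\mr P_N$, $\mr P_M$ against the single-valued test functions $\bs\eta,\mu$, and observe that the single-valued traces $\mr P_N(\mb n\times\mb w^*)$ and $\mr P_M(\mb u^*\cdot\mb n)$ sum to zero over each interior face of $\pp\mc T_h\backslash\Gamma$ because $\mb w^*,\mb u^*$ are conforming while $\mb n$ flips orientation. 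Finally \eqref{eq:dpr_eq_5} and \eqref{eq:dpr_eq_7} are immediate from the homogeneous dual boundary data $\mb n\times\mb u^*=\mb 0$ and $p^*=0$ on $\Gamma$. I do not expect a genuine obstacle: the content is routine verification mirroring the first lemma. The one place demanding care—and the only place where mistakes naturally arise—is the bookkeeping of signs, making sure the minus signs in \eqref{eq:dual_eq} line up with the lower-sign choices in the weak-commutativity identities and in the boundary-remainder operators so that no spurious term survives.
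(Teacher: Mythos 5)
Your proposal is correct and follows exactly the route the paper intends: the paper's own proof consists of the single line that the argument is ``similar to the proof of \eqref{eq:proj_eq},'' and your derivation is precisely that mirroring --- the lower-sign weak-commutativity identities \eqref{eq:id_wk_cm}, the projection properties \eqref{eq:rq_proj_2}--\eqref{eq:rq_proj_3}, the inclusions \eqref{eq:hdg_sp_4}--\eqref{eq:hdg_sp_5}, the lower-sign boundary remainders, and the sign flips coming from \eqref{eq:dual_eq}. The sign bookkeeping you flag as the delicate point checks out, and the single-valuedness argument for the interior-face traces is the same one implicitly used for \eqref{eq:proj_eq_4} and \eqref{eq:proj_eq_6}.
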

\begin{proof}
The proof here is similar to the proof of \eqref{eq:proj_eq}.
\end{proof}

\begin{proof}[Proof of Proposition \ref{prop:ene_id}]
By \eqref{eq:err_eqns_4}, \eqref{eq:err_eqns_5}, \eqref{eq:err_eqns_6} and \eqref{eq:err_eqns_7} we have
\begin{align*}
-\dualpr{\mb n\times\bs\varepsilon_h^w+\tau_t(\bs\varepsilon_h^u-\widehat{\bs\varepsilon}_h^u),\widehat{\bs\varepsilon}_h^u}_{\pp\mc T_h}&=-\dualpr{\bs\delta_{\tau_t}^\Pi,\widehat{\bs\varepsilon}_h^u}_{\pp\mc T_h},\\
-\dualpr{\Pi\mb u\cdot\mb n+\tau_n(\varepsilon_h^p-\widehat{\varepsilon}_h^p),\widehat{\varepsilon}_h^p}_{\pp\mc T_h}
&=-\dualpr{\delta_{\tau_n}^\Pi,\widehat{\varepsilon}_h^p}_{\pp\mc T_h}.
\end{align*}
Now adding the above two equations with equations \eqref{eq:err_eqns_1} - \eqref{eq:err_eqns_3} with test functions $\mb r=\bs\varepsilon_h^w$, $\mb v=\bs\varepsilon_h^u$ and $q=\varepsilon_h^p$, we obtain the energy identity.

\end{proof}

\begin{proof}[Proof of Proposition \ref{prop:duality_id}]
Taking $\mb r=\Pi^*\mb w^*$, $\mb v=\Pi^*\mb u^*$, $q=\Pi^*p^*$ in equations \eqref{eq:err_eqns_1}-\eqref{eq:err_eqns_3}, and then combining \eqref{eq:err_eqns_4}, \eqref{eq:dpr_eq_5}, \eqref{eq:err_eqns_6} and \eqref{eq:dpr_eq_7}, we have
\begin{subequations}\label{eq:err_tested}
\begin{align}
(\bs\varepsilon_h^w,\Pi^*\mb w^*)_{\mc T_h}-(\bs\varepsilon_h^u,\nabla\times\Pi^*\mb w^*)_{\mc T_h}-\dualpr{\widehat{\bs\varepsilon}_h^u,\Pi^*\mb w^*\times\mb n}_{\pp\mc T_h}&=(\Pi\mb w-\mb w,\Pi^*\mb w^*)_{\mc T_h},\\
(\nabla\times\bs\varepsilon_h^w,\Pi^*\mb u^*)_{\mc T_h}+\dualpr{\tau_t\mr P_N(\bs\varepsilon_h^u-\widehat{\bs\varepsilon}_h^u),\Pi^*\mb u^*}_{\pp\mc T_h}&\\
\nonumber
-(\varepsilon_h^p,\nabla\cdot\Pi^*\mb u^*)_{\mc T_h}+\dualpr{\widehat{\varepsilon}_h^p,\Pi^*\mb u^*\cdot\mb n}_{\pp\mc T_h}
&=\dualpr{\bs\delta_{\tau_t}^\Pi,\Pi^*\mb u^*}_{\pp\mc T_h},\\
(\nabla\cdot\bs\varepsilon_h^u,\Pi^*p^*)_{\mc T_h}+\dualpr{\tau_n(\varepsilon_h^p-\widehat{\varepsilon}_h^p),\Pi^*p^*}_{\pp\mc T_h}
&=\dualpr{\delta_{\tau_n}^\Pi,\Pi^*p^*}_{\pp\mc T_h},\\
-\dualpr{\mb n\times\bs\varepsilon_h^w+\tau_t(\bs\varepsilon_h^u-\widehat{\bs\varepsilon}_h^u),\mr P_N\mb u^*}_{\pp\mc T_h}&=-\dualpr{\bs\delta_{\tau_t}^\Pi,\mr P_N\mb u^*}_{\pp\mc T_h},\\
-\dualpr{\bs\varepsilon_h^u\cdot\mb n+\tau_n(\varepsilon_h^p-\widehat{\varepsilon}_h^p),\mr P_Mp}_{\pp\mc T_h}
&=-\dualpr{\delta_{\tau_n}^\Pi,\mr P_Mp}_{\pp\mc T_h}.
\end{align}
\end{subequations}

On the other hand, taking $\mb r=\bs\varepsilon_h^w$, $\mb v=\bs\varepsilon_h^u$, $q=\varepsilon_h^p$ in equations \eqref{eq:dpr_eq_1}-\eqref{eq:dpr_eq_3}, and then combining \eqref{eq:dpr_eq_4}, \eqref{eq:err_eqns_5}, \eqref{eq:dpr_eq_6} and \eqref{eq:err_eqns_7}, we have
\begin{subequations}\label{eq:dl_tested}
\begin{alignat}{5}
(\Pi^*\mb w^*,\bs\varepsilon_h^w)_{\mc T_h}+(\Pi^*\mb u^*,\nabla\times\bs\varepsilon_h^w)_{\mc T_h}+\dualpr{\mr P_N\mb u^*,\bs\varepsilon_h^w\times\mb n}_{\pp\mc T_h}&=(\Pi^*\mb w^*-\mb w^*,\bs\varepsilon_h^w)_{\mc T_h},\\
-(\nabla\times\Pi^*\mb w^*,\bs\varepsilon_h^u)_{\mc T_h}+\dualpr{\tau_t\mr P_N(\Pi^*\mb u^*-\mr P_N\mb u^*),\bs\varepsilon_h^u}_{\pp\mc T_h}&\\
\nonumber
+(\Pi^* p^*,\nabla\cdot\bs\varepsilon_h^u)_{\mc T_h}-\dualpr{\mr P_Mp^*,\bs\varepsilon_h^u\cdot\mb n}_{\pp\mc T_h}
&=(\bs\theta,\bs\varepsilon_h^u)_{\mc T_h}-\dualpr{\bs\delta_{-\tau_t}^{\Pi^*},\bs\varepsilon_h^u}_{\pp\mc T_h},\\
-(\nabla\cdot\Pi^*\mb u^*,\varepsilon_h^p)_{\mc T_h}+\dualpr{\tau_n(\Pi^* p^*-\mr P_Mp^*),\varepsilon_h^p}_{\pp\mc T_h}
&=-\dualpr{\delta_{-\tau_n}^{\Pi^*},\varepsilon_h^p}_{\pp\mc T_h},\\
\dualpr{\mb n\times\Pi^*\mb w^*-\tau_t(\Pi^*\mb u^*-\mr P_N\mb u^*),\widehat{\bs\varepsilon}_h^u}_{\pp\mc T_h}&=\dualpr{\bs\delta_{-\tau_t}^{\Pi^*},\widehat{\bs\varepsilon}_h^u}_{\pp\mc T_h},\\
\dualpr{\Pi\mb u^*\cdot\mb n-\tau_n(\Pi^* p^*-\mr P_Mp^*),\widehat{\varepsilon}_h^p}_{\pp\mc T_h}
&=\dualpr{\delta_{-\tau_n}^{\Pi^*},\widehat{\varepsilon}_h^p}_{\pp\mc T_h}.
\end{alignat}
\end{subequations}
Note that the left of equations \eqref{eq:err_tested} is the permutation of the left of equations \eqref{eq:dl_tested}. This completes the proof.

\end{proof}

\subsection{Proofs in Section \ref{sec:gal_proj}}

\begin{proof}[Proof of Theorem \ref{thm:curl+_conv}]
We first prove $\Pi_k^c$ is well defined. Note that $\dim\nabla\times\mc P_k(K)^3=\dim\mc P_k(K)^3-\dim\nabla\mc P_{k+1}(K)$. Denote by $d_k=\dim\mc P_k(K)$ and we have $\dim\nabla\times\mc P_k(K)^3 = 3d_k-(d_{k+1}-1)$. Similarly we obtain $\dim(\nabla\times\mc P_{k+1}(K)^3)^{\perp_k}=3d_k-3d_{k+1}+d_{k+2}-1$. Finally note that $\dim(\mc P_k(K)^3\oplus\nabla\widetilde{\mc P}_{k+2}(K))^{\perp_{k+1}}=3d_{k+1}-3d_k-(d_{k+2}-d_{k+1})$. Adding up the dimensions we know that the number of equations is equal to $3d_k$ and therefore \eqref{eq:curl+_proj} is a square system. Assume $\mb w=0$, it remains to show  the following system about $\mb w_K\in\mc P_k(K)^3$ only admits trivial solution:
\begin{subequations}\label{eq:curl+_uniq}
\begin{alignat}{5}\label{eq:curl+_uniq_1}
(\mb w_K,\mb r)_K&=0 \quad\forall\mb r\in\nabla\times\mc P_k(K)^3,\\
\label{eq:curl+_uniq_2}
(\mb w_K,\mb r)_K&=0 \quad\forall\mb r\in(\nabla\times\mc P_{k+1}(K)^3)^{\perp_k},\\
\label{eq:curl+_uniq_3}
(\mb w_K,\nabla\times\mb v)_{K}
&=0
\quad\forall\mb v\in(\mc P_k(K)^3\oplus\nabla\widetilde{\mc P}_{k+2}(K))^{\perp_{k+1}}.
\end{alignat}
\end{subequations}
Note that \eqref{eq:curl+_uniq_1} implies $(\mb w_K,\nabla\times\mb v)_K=0$ for all $\mb v\in\mc P_k(K)^3\oplus\nabla\widetilde{\mc P}_{k+2}(K)$. This with \eqref{eq:curl+_uniq_3} gives $(\mb w_K,\nabla\times\mc P_{k+1}(K)^3)_K=0$, which with \eqref{eq:curl+_uniq_2} implies $\mb w_K=0$. Therefore $\Pi_k^c$ is well defined.

We next prove \eqref{eq:curl+_conv}. Define $\bs\varepsilon_k^w:=\Pi_k^c\mb w-\Pi_k\mb w$. By
\eqref{eq:curl+_proj} we obtain
\begin{subequations}\label{eq:curl+_prj_conv}
\begin{alignat}{5}
\label{eq:curl+_prj_conv_1}
(\bs\varepsilon_k^w,\mb r)_K&=0 \quad\forall\mb r\in\nabla\times\mc P_k(K)^3\oplus(\nabla\times\mc P_{k+1}(K)^3)^{\perp_k},\\
\label{eq:curl+_prj_conv_2}
(\bs\varepsilon_k^w,\nabla\times\mb v)_{K}
&=\dualpr{(\mb n\times\mb w)-\mr P_N(\mb n\times\mb w),\mb v}_{\pp K}
\quad\forall\mb v\in(\mc P_k(K)^3\oplus\nabla\widetilde{\mc P}_{k+2}(K))^{\perp_{k+1}}.
\end{alignat}
\end{subequations}
From \eqref{eq:curl+_prj_conv_1} we have $(\bs\varepsilon_k^w,\nabla\times\mb v)_K=0$ for all $\mb v\in\mc P_k(K)^3\oplus\nabla\widetilde{\mc P}_{k+2}(K)$. Also note that
$(\mc P_k(K)^3\oplus\nabla\widetilde{\mc P}_{k+2}(K))^t\subset N(\pp K)$. Therefore we have
\begin{align}\label{eq:curl+_prj_conv_2_ext}
(\bs\varepsilon_k^w,\nabla\times\mb v)_{K}
=\dualpr{(\mb n\times\mb w)-\mr P_N(\mb n\times\mb w),\mb v}_{\pp K}
\quad\forall\mb v\in\mc P_{k+1}(K)^3.
\end{align}

Since $\bs\varepsilon_k^w\in\mc P_k(K)^3$, we can decompose $\bs\varepsilon_k^w=\bs\varepsilon_k^1+\bs\varepsilon_k^2$, where $\bs\varepsilon_k^1\in\nabla\times\mc P_{k+1}(K)^3$ and $\bs\varepsilon_k^2\in (\nabla\times\mc P_{k+1}(K)^3)^{\perp_k}$. Let $\mb v_1\in\mc P_{k+1}(K)^3$ such that $\bs\varepsilon_k^1=\nabla\times\mb v_1$. Choose any $p\in\mc P_{k+2}(K)$. Substituting $\mb v=\mb v_1+\nabla p$ in equation \eqref{eq:curl+_prj_conv_2_ext} and using \eqref{eq:curl+_prj_conv_1}, we have 
\begin{align*}
\|\bs\varepsilon_k^w\|_K^2
=(\bs\varepsilon_k^w,\nabla\times(\mb v_1+\nabla p)+\bs\varepsilon_k^2)_{K}
=\dualpr{(\mb n\times\mb w)-\mr P_N(\mb n\times\mb w),\mb v_1+\nabla p}_{\pp K}.
\end{align*}
Therefore
\begin{align*}
\|\bs\varepsilon_k^w\|_K^2
&\lesssim h_K^{-1/2}\|(\mb n\times\mb w)-\mr P_N(\mb n\times\mb w)\|_{\pp K}\inf_{p\in\mc P_{k+2}(K)}\|\mb v_1+\nabla p\|_K\\
&\lesssim h_K^{1/2}\|(\mb n\times\mb w)-\mr P_N(\mb n\times\mb w)\|_{\pp K}\|\nabla\times\mb v_1\|_K\\
&\le h_K^{1/2}\|(\mb n\times\mb w)-\mr P_N(\mb n\times\mb w)\|_{\pp K}\|\bs\varepsilon_k^w\|_K.
\end{align*}
Finally note that
\begin{align*}
\|\mb n\times\mb w-\mr P_N(\mb n\times\mb w)\|_{\pp K}\le 2\|\mb n\times\mb w-\mb n\times\Pi_k\mb w\|_{\pp K}\lesssim h_K^{m-1/2}|\mb w|_{m,K},
\end{align*}
with $m\in(\frac{1}{2},k+1]$, where we use \eqref{eq:conv_L2} for the last inequality sign. This completes the proof.

\end{proof}

\begin{proof}[Proof of Proposition \ref{prop:bdmh_proj}]
Let $\Pi_k^\mr{BDM}:H^1(K)^3\rightarrow\mc P_k(K)^3$ be the classical BDM projection (see \cite{Ne:1986}) and define $\bs\varepsilon_k^u:=\Pi_{k,\tau_K}^B\mb u-\Pi_k^\mr{BDM}\mb u$. Then we have
\begin{subequations}
\begin{align}
\label{eq:pj_bdmpr_1}
(\bs\varepsilon_k^u,\mb r)_K &=0\quad\forall\mb r\in \mc N_{k-2}(K),\\
\label{eq:pj_bdmpr_2}
\dualpr{\bs\varepsilon_k^u\cdot\mb n+\tau_K(\Pi_{k-1}p-p),\mu}_{\pp K}&=0\quad\forall\mu\in\mc R_k(\pp K).
\end{align}
\end{subequations}
Choosing $\mu=\bs\varepsilon_k^u\cdot\mb n$ in \eqref{eq:pj_bdmpr_2}, then we have
$\|\bs\varepsilon_k^u\cdot\mb n\|_{\pp K}\le \tau_K^\mr{max}\|\Pi_{k-1}p-p\|_{\pp K}$.
By \eqref{eq:pj_bdmpr_1} we know $\bs\varepsilon_h^u$ is the BDM lifting of $\bs\varepsilon_k^u\cdot\mb n$ and therefore $\|\bs\varepsilon_h^u\|_K\lesssim h_K^{1/2}\|\bs\varepsilon_k^u\cdot\mb n\|_{\pp K}\le \tau_K^\mr{max}h_K^m|p|_{m,K}$ with $m\in[1,k]$. Finally we use the well known convergence properties about the classical BDM projection, namely $\|\Pi_k^\mr{BDM}\mb u-\mb u\|_K\lesssim h_K^s|\mb u|_{s,K}$ with $s\in[1,k+1]$ and the proof is thus completed.
\end{proof}

\section*{Acknowledgments}
This work was partially supported by the NSF grant DMS-1818867. Shukai Du would like to thank P. Monk for helpful discussions that led to a better presentation of the paper.

\bibliographystyle{abbrv}
\bibliography{maxref_v2}

\end{document}